\newtheorem{thm}{Theorem}[section]
\newtheorem{lem}[thm]{Lemma}
\newtheorem{cor}[thm]{Corollary}
\newtheorem{prop}[thm]{Proposition}
\newtheorem{conj}[thm]{Conjecture}
\theoremstyle{definition}
\newtheorem{rem}[thm]{Remark}
\newtheorem{defn}[thm]{Definition}
\newtheorem{ex}[thm]{Example}
\def\F{{\mathbb F}}
\def\G{{\mathbb G}}
\def\Q{{\mathbb Q}}
\def\R{{\mathbb R}}
\def\Z{{\mathbb Z}}
\def\C{{\mathbb C}}
\def\Br{\mathop{\mathrm{Br}}\nolimits}
\def\Fr{\mathop{\mathrm{Fr}}\nolimits}
\def\End{\mathop{\mathrm{End}}\nolimits}
\def\Aut{\mathop{\mathrm{Aut}}\nolimits}
\def\Cl{\mathop{\mathrm{Cl}}\nolimits}
\def\Frob{\mathop{\mathrm{Frob}}\nolimits}
\def\Gal{\mathop{\mathrm{Gal}}\nolimits}
\def\Hom{\mathop{\mathrm{Hom}}\nolimits}
\def\id{\mathop{\mathrm{id}}\nolimits}
\def\GL{\mathop{\mathrm{GL}}\nolimits}
\def\GSpin{\mathop{\mathrm{GSpin}}\nolimits}
\def\SO{\mathop{\mathrm{SO}}\nolimits}
\def\Pic{\mathop{\mathrm{Pic}}\nolimits}
\def\Spec{\mathop{\rm Spec}}
\def\Tr{\mathop{\text{\rm Tr}}\nolimits}
\def\cris{\text{\rm cris}}
\def\rank{\mathop{\text{\rm rank}}\nolimits}
\def\det{\mathop{\mathrm{det}}\nolimits}
\def\KS{\mathop{\mathrm{KS}}\nolimits}
\def\p{\mathop{\mathfrak{p}}\nolimits}
\newcommand{\et}{\mathrm{\acute{e}t}}
\def\num{\mathop{\mathrm{num}}\nolimits}
\def\prim{\mathop{\mathrm{prim}}\nolimits}
\def\CH{\mathop{\mathrm{CH}}\nolimits}
\def\alg{\mathop{\mathrm{alg}}\nolimits}
\def\tr{\mathop{\mathrm{tr}}\nolimits}
\numberwithin{equation}{section}
\begin{document}

\title[Hodge standard conjecture for self-products of K3 surfaces]{The Hodge standard conjecture for self-products of K3 surfaces}

\author{Kazuhiro Ito}
\address{Kavli Institute for the Physics and Mathematics of the Universe (WPI), The University of Tokyo,
5-1-5 Kashiwanoha, Kashiwa, Chiba, 277-8583, Japan}
\email{kazuhiro.ito@ipmu.jp}

\author{Tetsushi Ito}
\address{Department of Mathematics, Faculty of Science, Kyoto University, Kyoto 606-8502, Japan}
\email{tetsushi@math.kyoto-u.ac.jp}

\author{Teruhisa Koshikawa}
\address{Research Institute for Mathematical Sciences, Kyoto University, Kyoto 606-8502, Japan}
\email{teruhisa@kurims.kyoto-u.ac.jp}

% \date{\today}

\subjclass[2020]{Primary 14J28; Secondary 11G15, 14C25}
\keywords{K3 surface, motive, Hodge standard conjecture}

% 14J28  	K3 surfaces and Enriques surfaces
% 11G15  	Complex multiplication and moduli of abelian varieties
% 14C25  	Algebraic cycles

\maketitle

\begin{abstract}
As an application of our previous work on CM liftings of K3 surfaces and the Tate conjecture, we prove the Hodge standard conjecture for squares of K3 surfaces. We also deduce the Hodge standard conjecture for all the powers of certain K3 surfaces.  
\end{abstract}

\section{Introduction} \label{Section:Introduction}
The standard conjectures on algebraic cycles is a set of conjectures envisioned by Grothendieck \cite{Grothendieck, Kleiman68, Kleiman94}. The Hodge standard conjecture---one of the standard conjectures---is modelled on the classical Hodge theory and the Hodge index theorem for surfaces, and it predicts certain positivity of the intersection product with respect to a fixed polarization. 
Grothendieck wrote at the end of \cite{Grothendieck} that
\begin{quote}
Alongside the problem of resolution of singularities, the proof of the standard conjectures seems to me to be the most urgent task in algebraic geometry. 
\end{quote}
For instance, it was observed that the Hodge standard conjecture for $X\times X$, together with the Lefschetz standard conjecture for $X$, implies the Weil conjecture for $X$. In fact, the standard conjectures provide a good theory of pure motives and it has been quite influential. While the Weil conjecture was proved by Deligne by other means and the semisimplicity of the category of numerical motives was directly shown by Jannsen \cite{Jannsen92}, the Hodge standard conjecture itself has seen very little progress. 

Let us recall the precise statement.
Let $k$ be a field and $X$ a geometrically connected smooth projective variety of dimension $d$ over $k$.
We fix a prime number $\ell$ invertible on $X$.
Let $A^i_\ell(X)$ denote the $\Q$-vector subspace of
$H^{2i}_\et(X_{\overline{k}}, \Q_\ell(i))$
spanned by the image of the cycle class map
\[
Z^i(X) \to H^{2i}_\et(X_{\overline{k}}, \Q_\ell(i)),
\]
where $\overline{k}$ is an algebraic closure of $k$, $X_{\overline{k}}:=X \times_{\Spec k} \Spec \overline{k}$, and $Z^i(X)$ is the abelian group of algebraic cycles of codimension $i$ on $X$.
Let $\mathscr{L}$ be an ample line bundle on $X$.
For an integer $i \leq d/2$, we define the primitive algebraic part as
\[
A^{i, \prim}_\ell(X) := \{ \, \alpha \in A^i_\ell(X) \, \vert \, \alpha \cdot \mathscr{L}^{d-2i+1} = 0 \, \},
\]
where $\alpha \cdot \mathscr{L}^{d-2i+1} \in A^{d-i+1}_\ell(X)$ denotes the cup product.

\begin{conj}[The Hodge standard conjecture]\label{Conjecture:Hodge standard}
The pairing
\[
\langle -, - \rangle_i \colon A^{i, \prim}_\ell(X) \times A^{i, \prim}_\ell(X) \to \Q, \quad (\alpha, \beta) \mapsto (-1)^i \alpha \cdot \beta \cdot  \mathscr{L}^{d-2i}
\]
is \emph{positive definite}.
\end{conj}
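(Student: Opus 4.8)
We outline a proof of Conjecture~\ref{Conjecture:Hodge standard} in the case treated in this paper, namely when $X=Y\times Y$ for a K3 surface $Y$ (the conjecture in general remaining open). In characteristic zero the statement is a reformulation of the Hodge--Riemann bilinear relations, and a standard spreading-out and specialization argument reduces the remaining cases to that of a K3 surface $Y$ over a finite field $\F_q$ equipped with the product polarization $\mathscr{L}=p_1^*L\otimes p_2^*L$ of an ample $L$ on $Y$ (positivity over the finite field specializes back to positivity over the original field, since the specialization map on algebraic classes is isometric and injective). Write $V:=H^2_{\et}(Y_{\overline{k}},\Q_\ell(1))$, with geometric Frobenius $\phi$, perfect symmetric cup-product form $\langle -,-\rangle_V$, and primitive subspace $V_{\mathrm{prim}}:=[L]^{\perp}$. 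Since $H^1_{\et}(Y_{\overline{k}},\Q_\ell)=H^3_{\et}(Y_{\overline{k}},\Q_\ell)=0$, the Künneth formula gives
\[
H^4_{\et}\bigl((Y\times Y)_{\overline{k}},\Q_\ell(2)\bigr)\;\cong\;\Q_\ell\;\oplus\;(V\otimes V)\;\oplus\;\Q_\ell ,
\]
the outer summands carry no non-zero $\mathscr{L}$-primitive class, and the $\mathscr{L}$-primitive part of $V\otimes V$ is $V_{\mathrm{prim}}\otimes V_{\mathrm{prim}}\cong\End(V_{\mathrm{prim}})$. A direct computation identifies $\langle -,-\rangle_2$ on this space with the adjoint trace form $f\mapsto\Tr_{V_{\mathrm{prim}}}(f\circ f^{\dagger})$, where $f^{\dagger}$ is the adjoint for $\langle -,-\rangle_{V_{\mathrm{prim}}}$; so it suffices to prove positivity of this form on the algebraic classes.

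By the Tate conjecture for $Y\times Y$ --- which follows from the Tate conjecture for K3 surfaces, the algebraicity of the Kuga--Satake correspondence, and Tate's theorem for abelian varieties over finite fields --- the algebraic classes in $V\otimes V$ form, after replacing $\F_q$ by a finite extension, exactly the $\Q$-subalgebra $\End_{\phi}(V_{\mathrm{prim}})$ of Frobenius-equivariant endomorphisms; thus we must show that $\dagger$ is a positive involution of $\End_{\phi}(V_{\mathrm{prim}})$. Decompose $V_{\mathrm{prim}}=V_{\mathrm{alg}}\oplus V_{\mathrm{tr}}$ orthogonally and $\phi$-equivariantly into its algebraic and transcendental parts (so $V_{\mathrm{tr}}=0$ precisely when $Y$ is supersingular). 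The Frobenius eigenvalues on $V_{\mathrm{alg}}$ are roots of unity times $q$, whereas those on $V_{\mathrm{tr}}$ are not; hence $\End_{\phi}(V_{\mathrm{prim}})=\End_{\phi}(V_{\mathrm{alg}})\oplus\End_{\phi}(V_{\mathrm{tr}})$ and $\dagger$ preserves this splitting. On the algebraic block $\langle -,-\rangle_{V_{\mathrm{alg}}}$ is negative definite by the Hodge index theorem for $Y$ (that is, Conjecture~\ref{Conjecture:Hodge standard} for $Y$, known in all characteristics), so $f^{\dagger}$ is the transpose and $\Tr(ff^{\dagger})>0$ for $f\neq 0$; in particular the supersingular case is already complete.

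For the transcendental block we invoke CM liftings. By our previous work on CM liftings of K3 surfaces there is a K3 surface $\widetilde{Y}$ over a number field $F$, with good reduction $Y$ at a place above $p$, carrying an ample line bundle lifting $L$ and complex multiplication, chosen so that the specialization isomorphism $H^2_{\et}(\widetilde{Y}_{\overline{F}},\Q_\ell(1))\cong V$ identifies $\End_{\mathrm{Hodge}}(\mathfrak{t}(\widetilde{Y}))\otimes\Q_\ell$ with $\End_{\phi}(V_{\mathrm{tr}})$, compatibly with $\dagger$ (on both sides the adjoint of the cup-product form, which is preserved by specialization). Since $\mathfrak{t}(\widetilde{Y})$ is a polarized rational Hodge structure, $(\End_{\mathrm{Hodge}}(\mathfrak{t}(\widetilde{Y})),\dagger)$ is a semisimple $\Q$-algebra with a positive involution --- a standard consequence of the Hodge--Riemann bilinear relations --- so $f\mapsto\Tr(ff^{\dagger})$ is positive definite on $\End_{\phi}(V_{\mathrm{tr}})$ as well. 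Combining the two blocks shows that $\dagger$ is a positive involution of $\End_{\phi}(V_{\mathrm{prim}})$, which proves the Hodge standard conjecture for $Y\times Y$. The same scheme yields the conjecture for every power $Y^n$ as soon as the transcendental motive $\mathfrak{t}(Y)$ is of CM type, since then the Künneth summands of $\mathfrak{t}(Y)^{\otimes n}$ are CM motives whose endomorphism algebras carry their canonical positive involutions; this accounts for the second assertion of the abstract.

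\textbf{The main obstacle.} The crux is the construction of the CM lift $\widetilde{Y}$ with all of the required properties --- good reduction equal to $Y$, complex multiplication realizing exactly the algebra $\End_{\phi}(V_{\mathrm{tr}})$, and a lift of the given polarization --- which rests on our earlier work on CM liftings of K3 surfaces together with the Tate conjecture for K3 surfaces. The subsidiary points that must be treated with care are: determining the structure of $\End_{\phi}(V_{\mathrm{tr}})$ for a K3 surface over a finite field, and checking that it is matched by --- not merely contained in --- the endomorphism algebra of a CM lift; verifying that the Künneth projectors and the product polarization behave well under specialization, so that the reduction to the adjoint trace form and its block decomposition are legitimate; and deducing the Tate conjecture in codimension $2$ for $Y\times Y$ from its known cases via the algebraic Kuga--Satake correspondence. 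Granting these inputs, the desired positivity is simply transported from the Hodge--Riemann bilinear relations in characteristic zero.
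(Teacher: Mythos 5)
Your reduction to positivity of the adjoint (transpose) involution on the algebra of Frobenius-equivariant endomorphisms of the primitive cohomology, with the split into algebraic and transcendental blocks, parallels the paper's Section~\ref{Section:The Hodge standard conjecture for the square of a K3 surface}; up to this point the strategies agree. The gap is in how you establish positivity on the transcendental block, and the paper's introduction explicitly flags this as the place where a CM-lifting argument alone fails.

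You claim that one can choose a CM lift $\widetilde{Y}$ so that the specialization map \emph{identifies} $\End_{\mathrm{Hodge}}(\mathfrak{t}(\widetilde Y))\otimes\Q_\ell$ with $\End_{\phi}(V_{\mathrm{tr}})\cong\End(\mathfrak{t}(X))\otimes\Q_\ell$. This cannot be arranged in general: by Theorem~\ref{Theorem:structure of endomorphism ring}, $\End(\mathfrak{t}(X))$ is a central division algebra of degree $e(X)^2$ over the CM field $\Q[\Fr_X]$, and it is commutative if and only if $e(X)=1$ (Remark~\ref{Remark:compare with characteristic p case}); K3 surfaces with $e(X)>1$ exist. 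On the other hand, the motivic endomorphism algebra of a K3 surface in characteristic zero is always a number field (Proposition~\ref{Proposition:transcendental motive is simple in characteristic 0}), in particular commutative. So no CM lift can have its Hodge endomorphism algebra surject onto, let alone be identified with, a non-commutative $\End(\mathfrak{t}(X))$. The paper's Theorem~\ref{Theorem:CM lifting revisited} makes this precise: what you can lift is only a maximal $\iota$-stable \emph{subfield} $E\subset\End(\mathfrak{t}(X))$, and as noted in the introduction, the CM-lift route therefore proves positivity only for cycles generating an $\iota$-stable subfield, not for general cycles. To cover all of $\End(\mathfrak{t}(X))$ the paper instead uses an \emph{algebraic-group} argument: Theorem~\ref{Theorem:compare with Kisin's algebraic group} produces a surjection $\tau\colon I^{\KS}\twoheadrightarrow I(\p(X))$ from Kisin's reductive group inside $\Aut$ of the Kuga--Satake abelian variety, the Rosati involution's positivity forces $(I^{\KS}/\G_m)(\R)$, hence $I(\mathfrak{t}(X))(\R)$, to be compact, and Lemma~\ref{Lemma:positive involution and compact unitary} translates compactness of $I(\mathfrak{t}(X))(\R)$ into positivity of $\iota_\R$ on the whole algebra $\End(\mathfrak{t}(X))_\R$, including the non-commutative case. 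You would need to replace your CM-lift step by this group-theoretic transfer (or some substitute that handles the non-commutative endomorphism algebra) for the proof to go through. Relatedly, your closing sentence about higher powers when $\mathfrak{t}(Y)$ "is of CM type" does not track the paper's actual criterion, which is neatness (a condition on multiplicative relations among Frobenius eigenvalues, Definition~\ref{Definition:neat K3}), not CM type of the transcendental motive.
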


The conjecture in characteristic $0$ follows from the Hodge theory and it holds true for surfaces by the Hodge index theorem. Some cases of abelian varieties over finite fields can be reduced to the Hodge index theorem or the Hodge theory as pointed out by Milne \cite{Milne02, Milne22}; see also Remark \ref{Remark:Milne} below. 
The second author studied the behavior of the conjecture under blow-ups during the study of the weight-monodromy conjecture for certain $p$-adically uniformized varieties \cite{Ito}. 
To the best of our knowledge, there are no other results for the Hodge standard conjecture itself for an arbitrary given $\ell$. 

There are some other related results, however. Yun and Zhang obtained a result for certain moduli spaces of shtukas that seems to be related to the Hodge standard conjecture \cite[Theorem 1.7]{Yun-Zhang}. 
The recent work of Ancona \cite{Ancona} and a complementary article by the third author \cite{Koshikawa} study the \emph{numerical} version for certain abelian varieties. 
(The numerical version for an abelian variety over a finite field implies the $\ell$-adic homological version for infinitely many $\ell$ by \cite{Clozel}, but the Tate conjecture would be essentially necessary to treat all $\ell$.)

In this paper, we establish the following result as an application of our previous work \cite{Ito-Ito-Koshikawa} on CM liftings of K3 surfaces and the Tate conjecture. 

\begin{thm}\label{Theorem:Hodge standard for squares, intro}
Let $X$ be a K3 surface over a field.
The Hodge standard conjecture (Conjecture \ref{Conjecture:Hodge standard}) holds true for $X^2:=X \times X$ and every ample line bundle on $X^2$.
\end{thm}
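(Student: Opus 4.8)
The plan is to reduce the Hodge standard conjecture for $X^2$ to a statement about the Hodge structure (or $\ell$-adic Galois representation) on the transcendental part of $H^2$ of $X$, which is governed by CM liftings and the Tate conjecture via our previous work. In characteristic $0$ the conjecture is known by Hodge theory, so assume $k$ has characteristic $p > 0$; by a specialization/spreading-out argument we may assume $k$ is finitely generated, and then, after passing to the Kuga--Satake abelian variety or directly invoking the Tate conjecture for $X$ (which holds for K3 surfaces), we may assume $k$ is a finite field. The key structural input is the Künneth decomposition $H^\bullet(X^2) \cong H^\bullet(X) \otimes H^\bullet(X)$, together with the fact that for a K3 surface $H^0$, $H^4$, and the algebraic part of $H^2$ contribute only "easy" pieces, so that the only genuinely new primitive classes on $X^2$ live in $T \otimes T$, where $T \subset H^2_\et(X_{\overline{k}},\Q_\ell(1))$ is the orthogonal complement of $A^1_\ell(X)$ (the transcendental part).

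First I would set up the intersection form on $A^i_\ell(X^2)$ degree by degree and identify, for each $i$, the primitive subspace with respect to a chosen ample class on $X^2$; using the structure of ample cones on a product one reduces to the case $\mathscr{L} = \mathrm{pr}_1^*\mathscr{H} \boxtimes \mathrm{pr}_2^*\mathscr{H}$ for an ample $\mathscr{H}$ on $X$, up to a positive-definiteness-preserving change of basis (this is the type of blow-up/limit argument from \cite{Ito}). Next, the contributions from $A^0 \otimes A^j$, $A^1_{\mathrm{alg}} \otimes A^j$, etc., are handled by the classical Hodge index theorem on $X$ itself and on $X \times X$ restricted to product cycles, since these pairings are expressed through the intersection form on $\NS(X)$, which has the correct signature. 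The crucial step is the positivity of the pairing on the part of $A^2_\ell(X^2)$ coming from $T \otimes T$: an algebraic class there corresponds, via the Tate conjecture for $X^2$ (known since $X$ is a K3 surface, by work building on Tate, Deligne, and the Kuga--Satake construction), to a Galois-equivariant homomorphism $T \to T(−1)$, i.e. essentially an element of the endomorphism algebra of the "transcendental motive" $\mathfrak{t}(X)$. One then shows that the intersection pairing on such classes is identified, up to a positive scalar, with a trace form $\phi \mapsto \Tr(\phi \circ \phi^\dagger)$ on $\End(\mathfrak{t}(X))$ with respect to the involution $\dagger$ induced by the polarization on $T$; positivity of this trace form is exactly where the CM lifting result enters: lifting $X$ to a K3 surface $\widetilde X$ in characteristic $0$ with CM, the transcendental motive becomes a motive with CM by a field (or product of fields) $E$, its endomorphism algebra is $E$ with a positive involution, and the trace form on a CM field with positive involution is positive definite — and this positivity is preserved under specialization because the cycle classes and the pairing are compatible with reduction.

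The main obstacle, and the place where \cite{Ito-Ito-Koshikawa} does the real work, is controlling the transcendental part: one must know (i) that the Tate conjecture identifies $A^2_\ell(X^2) \cap (T \otimes T)$ with genuine endomorphisms of the $\ell$-adic realization commuting with Frobenius, (ii) that these endomorphisms, together with the polarization form, all lift compatibly to a CM point in the moduli of K3 surfaces so that the entire polarized $\ell$-adic (or Betti) picture with its ring of correspondences is the reduction of a characteristic-$0$ polarized Hodge structure, and (iii) that under this lift the intersection form goes to the honest Hodge-theoretic one, whose positivity is the classical Hodge--Riemann bilinear relation. Steps (i) and (iii) are formal once the Tate conjecture and the compatibility of cycle classes with specialization are in hand; step (ii) — the existence of a CM lifting carrying the extra endomorphisms — is the substantive ingredient, and the rest of the argument is organizing the Künneth pieces so that everything reduces to this single positive-definiteness of a trace form on a CM algebra.
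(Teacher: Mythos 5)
Your overall strategy---reduce to a finite field, use the Tate conjecture for $X^2$ to identify the relevant algebraic classes with $\End(\mathfrak{t}(X))$, and aim for positivity of the trace form $f\mapsto \Tr(f\,\iota(f))$---is exactly the skeleton of the paper's argument, and your decomposition of the Künneth pieces is fine. But the crucial step, as you have written it, does not close.

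The gap is in your claim that a single CM lift of $X$ gives positivity on all of $\End(\mathfrak{t}(X))$. Over the finite field, $\End(\mathfrak{t}(X))$ is in general a \emph{noncommutative} central division algebra of dimension $e(X)^2$ over the CM field $\Q[\Fr_X]$ (Theorem \ref{Theorem:structure of endomorphism ring}). A CM lift $\widetilde{X}$ has $\End(\mathfrak{t}(\widetilde{X}))$ a \emph{number field} $E$ (Proposition \ref{Proposition:transcendental motive is simple in characteristic 0}), and the specialization map $\End(\mathfrak{t}(\widetilde{X}))\hookrightarrow \End(\mathfrak{t}(X))$ is injective but far from surjective when $e(X)>1$: its image is only a maximal subfield $E\subset\End(\mathfrak{t}(X))$, stable under $\iota$ (Theorem \ref{Theorem:CM lifting revisited}). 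Lifting to characteristic $0$ and invoking Hodge--Riemann therefore yields positivity only on such a subfield, not on the whole algebra. Varying over different CM lifts does not help: positivity on each $\iota$-stable subfield $E_i$ separately does not imply positivity on the $\Q$-span of the $E_i$'s. The introduction of the paper points out exactly this obstruction (``some additional trick \dots is necessary'').

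The paper's additional trick is to sidestep specialization of individual cycles entirely: it proves directly that $\iota_\R$ is a positive involution on all of $\End(\mathfrak{t}(X))_\R$ (Corollary \ref{Corollary:positive involution}) by showing that the algebraic group $I(\mathfrak{t}(X))(\R)$ is compact, which in turn follows from the surjection $\tau\colon I^{\KS}\twoheadrightarrow I(\p(X))$ of Theorem \ref{Theorem:compare with Kisin's algebraic group} together with compactness of $(I^{\KS}/\G_m)(\R)$, itself a consequence of the positivity of the Rosati involution on the Kuga--Satake abelian variety. Positivity of $\iota$ on the full noncommutative algebra is then converted into positivity of the intersection pairing via the identity $\Tr_{\End(\mathfrak{t}(X))/\Q}(f\iota(f)) = e(X)\,\Tr_2(f\iota(f))$ (equation (4.2)), which uses the eigenspace decomposition of $T(X)_\ell\otimes\overline{\Q}_\ell$ under Frobenius. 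That trace identity and the group-theoretic proof of positivity of $\iota$ are the two ingredients missing from your sketch, and without them the noncommutative case is not handled.
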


\begin{rem}\label{Remark:Conjecture B}
It is well known that the Lefschetz standard conjecture, more precisely, Conjecture B in \cite{Grothendieck, Kleiman68, Kleiman94}, holds true for $X$, in fact, for any products of surfaces; see \cite[Corollary 2A10, Corollary 2.5]{Kleiman68}. 
Let us recall again that if $X$ is defined over a finite field, the Hodge standard conjecture for $X^2$ and the Lefschetz standard conjecture for $X$ imply the Weil conjecture for $X$, and Theorem \ref{Theorem:Hodge standard for squares, intro} justifies the assumption. However, we did not attempt to avoid the Weil conjecture in our proof of Theorem \ref{Theorem:Hodge standard for squares, intro}. 
\end{rem}

\begin{cor}\label{Corollary:standard conjecture for squares}
Let $X$ be a K3 surface over a field $k$.
All the standard conjectures hold for $X^2$. In particular, numerical equivalence coincides with $\ell$-adic homological equivalence for algebraic cycles on $X^2$. 
Namely, the kernel of the cycle class map
\[
Z^i(X^2) \to H^{2i}_\et(X^2_{\overline{k}}, \Q_\ell(i))
\]
is the subgroup of algebraic cycles that are numerically equivalent to $0$. 
Moreover, 
\[
A^i_{\ell}(X^2)\otimes_{\Q}\Q_{\ell} \to H^{2i}_\et(X^2_{\overline{k}}, \Q_\ell(i))
\]
is injective. 
\end{cor}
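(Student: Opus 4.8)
The plan is to obtain Corollary~\ref{Corollary:standard conjecture for squares} by feeding Theorem~\ref{Theorem:Hodge standard for squares, intro} into the classical implications of Kleiman; the point is that, apart from the Hodge standard conjecture, all the standard conjectures for $X^2$ are already known. Recall that the standard conjectures for a smooth projective variety $Y$ are: the Lefschetz standard conjecture (Conjecture~B of \cite{Grothendieck, Kleiman68, Kleiman94}, equivalently Conjecture~A, asserting the algebraicity of the Lefschetz operators $\Lambda$ and of the Lefschetz involution $*$), the Künneth standard conjecture (Conjecture~C), the equality of numerical and $\ell$-adic homological equivalence (Conjecture~D), and the Hodge standard conjecture (Conjecture~\ref{Conjecture:Hodge standard}). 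I apply these with $Y=X^2$.

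As recalled in Remark~\ref{Remark:Conjecture B}, Conjecture~B holds for $X^2$ because $X^2$ is a product of surfaces, by \cite[Corollary~2A10, Corollary~2.5]{Kleiman68}; Kleiman's implications then give Conjecture~A as well as the Künneth standard conjecture (Conjecture~C) for $X^2$. The Hodge standard conjecture for $X^2$ and every ample line bundle is exactly Theorem~\ref{Theorem:Hodge standard for squares, intro}. For Conjecture~D I would invoke Kleiman's theorem that Conjecture~B together with the Hodge standard conjecture implies the equality of numerical and homological equivalence; see \cite[\S\,3--4]{Kleiman68} and \cite{Kleiman94}. (Concretely: fix an ample $\mathscr{L}$ on $X^2$; by Conjecture~B the Lefschetz involution $*_{\mathscr{L}}$ carries each $A^i_\ell(X^2)$ into $A^{n-i}_\ell(X^2)$, where $n=\dim X^2$, and, using hard Lefschetz and the algebraicity of the primitive projectors, the form $(\alpha,\beta)\mapsto\alpha\cdot(*_{\mathscr{L}}\beta)$ on $A^i_\ell(X^2)$ is positive definite---the positivity on each primitive summand being Theorem~\ref{Theorem:Hodge standard for squares, intro}---hence non-degenerate, which is Conjecture~D.) This establishes all the standard conjectures for $X^2$.

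For the remaining assertions, observe that Conjecture~D says precisely that the kernel of $Z^i(X^2)\to H^{2i}_\et(X^2_{\overline{k}},\Q_\ell(i))$ is the subgroup of cycles numerically equivalent to $0$; equivalently, the pairing $A^i_\ell(X^2)\times A^{n-i}_\ell(X^2)\to A^{n}_\ell(X^2)=\Q$ induced by the intersection product (identifying $A^{n}_\ell(X^2)$ with $\Q$ via degree) is a perfect $\Q$-bilinear pairing---this is the definition of numerical equivalence combined with Conjecture~D. The injectivity of $A^i_\ell(X^2)\otimes_\Q\Q_\ell\to H^{2i}_\et(X^2_{\overline{k}},\Q_\ell(i))$ is then formal: pick $\Q$-bases $\{\alpha_j\}$ of $A^i_\ell(X^2)$ and $\{\beta_j\}$ of $A^{n-i}_\ell(X^2)$ with $\alpha_j\cdot\beta_k=\delta_{jk}$; if $\sum_j c_j\alpha_j=0$ with $c_j\in\Q_\ell$, then cupping with $\beta_k$, using multiplicativity of the cycle class map and the injectivity of $A^{n}_\ell(X^2)\hookrightarrow H^{2n}_\et(X^2_{\overline{k}},\Q_\ell(n))=\Q_\ell$, forces $c_k=0$ for all $k$.

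I do not anticipate a real obstacle: the substance of the corollary is Theorem~\ref{Theorem:Hodge standard for squares, intro}, and everything else is formal from Kleiman's work. The only care needed is bookkeeping---matching the normalization of Conjecture~\ref{Conjecture:Hodge standard} (stated for arbitrary ample line bundles, with the sign $(-1)^i$) with the hypotheses of the cited implications, and tracking the dependence of the Lefschetz involution on the chosen polarization.
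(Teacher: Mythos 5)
Your proof is correct and follows the same route as the paper: feed Theorem~\ref{Theorem:Hodge standard for squares, intro} together with the known Conjecture~B for products of surfaces into Kleiman's formal implications (the paper cites \cite[Theorem 4-1, Proposition 5-1]{Kleiman94}) to obtain all standard conjectures for $X^2$, and then deduce the injectivity of $A^i_\ell(X^2)\otimes_\Q\Q_\ell\to H^{2i}$ from the resulting perfect intersection pairing. The only cosmetic difference is that you unwind the final injectivity argument by hand, whereas the paper simply cites \cite[Lemma 2.5]{Tate94}, which encapsulates exactly the dual-basis argument you wrote out.
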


\begin{proof}
The Hodge standard conjecture (Theorem \ref{Theorem:Hodge standard for squares, intro}) and Conjecture B in Remark \ref{Remark:Conjecture B} imply all other standard conjectures \cite[Theorem 4-1, Proposition 5-1]{Kleiman94}. 
The last part follows from \cite[Lemma 2.5]{Tate94}. 
\end{proof}

The Hodge standard conjecture for squares also implies the following:

\begin{cor}[Conservativity]
Let $X, Y$ be K3 surfaces over a field $k$, and suppose a correspondence $u \in Z^2 (X\times Y)$ induces a Galois-equivariant isomorphism
\[
u_{\ell}\colon 
H^{2}_\et(X_{\overline{k}}, \Q_\ell) \xrightarrow{\cong}
H^{2}_\et(Y_{\overline{k}}, \Q_\ell). 
\]
Then, $u$ induces an \emph{isomorphism} between the $\ell$-adic homological motives of the second cohomology of $X$ and $Y$.
\end{cor}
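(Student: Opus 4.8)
The plan is to realize the statement as a direct consequence of the conservativity of the functor from $\ell$-adic homological motives over $k$ to numerical motives, which in turn follows once we know that $\ell$-adic homological and numerical equivalence agree on the relevant self-products. First I would reduce the claim about the motive of the second cohomology $h^2(X)$ to a claim about a single product: by Poincaré duality and the algebraicity of the Künneth projectors for surfaces (which follows from Conjecture B, recalled in Remark \ref{Remark:Conjecture B}), the motive $h^2(X)$ is a direct summand, cut out by an algebraic projector, of the homological motive of $X$; similarly for $Y$. The correspondence $u\in Z^2(X\times Y)$ composed with these projectors gives a morphism $\widetilde u\colon h^2(X)\to h^2(Y)$ of $\ell$-adic homological motives, and by hypothesis its $\ell$-adic realization $u_\ell$ is an isomorphism. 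Dually, the transpose $\transp{u}\in Z^2(Y\times X)$ gives $\widetilde{u}^{\,t}\colon h^2(Y)\to h^2(X)$, and $u_\ell$ being a Galois-equivariant isomorphism means $\transp{u}_\ell\circ u_\ell$ and $u_\ell\circ \transp{u}_\ell$ act invertibly (after possibly composing with a further self-correspondence, or simply: the realization of $\widetilde u$ is an isomorphism in the category of $\Q_\ell$-representations).

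The key step is then to pass from ``$u_\ell$ is an isomorphism'' to ``$\widetilde u$ is an isomorphism of homological motives.'' For this I would invoke Corollary \ref{Corollary:standard conjecture for squares}: it tells us that on $X\times Y$—which is a product of surfaces, and for which the Hodge standard conjecture holds because $X\times Y$ is a direct summand of $(X\sqcup Y)^{2}$ up to the usual manipulations, or more directly because the methods of Theorem \ref{Theorem:Hodge standard for squares, intro} apply—numerical and $\ell$-adic homological equivalence coincide. Concretely, the composite $v:=\widetilde u^{\,t}\circ\widetilde u\in \End(h^2(X))$ has $\ell$-adic realization $u_\ell^{-1}$-invertible, hence $v$ is not numerically equivalent to a nilpotent correspondence; but $\End(h^2(X))$ modulo numerical equivalence is a finite-dimensional semisimple $\Q$-algebra (Jannsen \cite{Jannsen92}), and $v$, having invertible realization, maps to a unit there. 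Since homological and numerical equivalence agree on $X\times X$ by Corollary \ref{Corollary:standard conjecture for squares}, $v$ is already a unit in $\End(h^2(X))$ at the level of homological motives, so $\widetilde u$ has a left inverse; the symmetric argument with $\widetilde u\circ \widetilde u^{\,t}$ gives a right inverse, and therefore $\widetilde u$ is an isomorphism of $\ell$-adic homological motives.

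The main obstacle—and the only genuinely substantive input—is the identification of homological and numerical equivalence on the self-product $X\times X$ (and the mixed product $X\times Y$), which is exactly what Corollary \ref{Corollary:standard conjecture for squares} provides via the Hodge standard conjecture for $X^2$. Everything else is formal: the algebraicity of Künneth projectors for products of surfaces (Remark \ref{Remark:Conjecture B}), the semisimplicity of the category of numerical motives, and the elementary fact that an endomorphism of a finite-dimensional module over a semisimple algebra with invertible image in a faithful quotient is itself invertible. One should be slightly careful that the category of $\ell$-adic homological motives maps faithfully—not just conservatively a priori—to numerical motives on these particular varieties; but this faithfulness on $X\times X$ and $X\times Y$ is again precisely the coincidence of the two equivalence relations asserted in Corollary \ref{Corollary:standard conjecture for squares}, applied to $X\times X$, to $Y\times Y$, and to $X\times Y$ (each being a product of at most two K3 surfaces, hence covered by the theorem up to replacing $X$ by the disjoint union of the surfaces involved).
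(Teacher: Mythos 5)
Your argument is correct and follows essentially the same path as the paper's one-line proof: reduce the claim to showing that the self-composites $\transp{\widetilde u}\circ\widetilde u$ and $\widetilde u\circ\transp{\widetilde u}$, which are endomorphisms of $\mathfrak{h}^2(X)$ and $\mathfrak{h}^2(Y)$ in the category of $\ell$-adic homological motives, are invertible, and derive this from the Hodge standard conjecture for squares. The paper dispatches the invertibility criterion by citing \cite[Corollary 3.14]{Kleiman68}, whereas you unwind the mechanism through Corollary \ref{Corollary:standard conjecture for squares} together with Jannsen's semisimplicity; both are legitimate, and your version has the merit of making the mechanism explicit. Two small remarks. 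First, semisimplicity is slightly more than is needed: once Corollary \ref{Corollary:standard conjecture for squares} identifies the $\ell$-adic homological and numerical endomorphism algebras of $\mathfrak{h}^2(X)$, the latter is finite-dimensional over $\Q$, and an element with invertible $\ell$-adic realization is then a non-zero-divisor in a finite-dimensional $\Q$-algebra, hence already invertible. Second, the caveat in your last paragraph about needing the coincidence of the two equivalences on the mixed product $X\times Y$ is superfluous: the composites above live on $X\times X$ and $Y\times Y$, and once they are invertible, $v^{-1}\circ\transp{\widetilde u}$ and $\transp{\widetilde u}\circ w^{-1}$ (with $v=\transp{\widetilde u}\circ\widetilde u$ and $w=\widetilde u\circ\transp{\widetilde u}$) give a left and a right inverse of $\widetilde u$ without any further input on $X\times Y$. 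Moreover, the ``disjoint union $X\sqcup Y$'' workaround you gesture at would not literally go through, since $X\sqcup Y$ is not a geometrically connected surface and Theorem \ref{Theorem:Hodge standard for squares, intro} is stated for a single K3 surface; fortunately this step is not actually needed, so the core of your proof stands.
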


Note that K\"unneth projectors exist for $X$ and $Y$, and hence the $\ell$-adic homological motives of the second cohomology of $X$ and $Y$ are well-defined.

\begin{proof}
It suffices to show that $u$ (or rather its restriction to the $\ell$-adic homological motives of the second cohomology) admits both left and right inverses in the category of $\ell$-adic homological motives. This follows from Theorem \ref{Theorem:Hodge standard for squares, intro} and \cite[Corollary 3.14]{Kleiman68}.
\end{proof}

\begin{rem}
If $k$ is finite, then, for any prime number $\ell'\neq p$, $\ell'$-adic homological equivalence coincides with numerical equivalence for algebraic cycles on $X\times Y$. Indeed, the Tate conjecture for $X^2$ \cite{Ito-Ito-Koshikawa} and the existence of $u$ imply the Tate conjecture for $X\times Y$. Together with the semisimplicity of Frobenius, this gives the claim. Therefore, $u$ induces an isomorphism of numerical motives of the second cohomology of $X$ and $Y$, and, in turn, induces an isomorphism between $\ell'$-adic second cohomology of $X$ and $Y$. 
\end{rem}

Let us sketch our proof of Theorem \ref{Theorem:Hodge standard for squares, intro}. It is easily reduced to the case where $k$ is a finite field, and we may assume so. In this setup, recall that we proved the Tate conjecture for $X^2$ in \cite{Ito-Ito-Koshikawa}. The Tate conjecture and the semisimplicity of Frobenius imply that numerical equivalence coincides with $\ell$-adic homological equivalence for every $\ell$; see Corollary \ref{Corollary:Conjecture D for squares over finite fields} (a special case of Theorem \ref{Corollary:standard conjecture for squares}). This enables us to study the motive of $X$. In particular, we prove in Theorem \ref{Theorem:structure of endomorphism ring} that the \emph{transcendental} part of the numerical motive of $X$ is simple in the category of numerical motives. This has been known for us for years. A key observation toward Theorem \ref{Theorem:Hodge standard for squares, intro} is that a certain formula for the intersection product on $X^2$ involving a natural involution $X^2 \to X^2 ; (x_0, x_1)\mapsto (x_1, x_0)$, which is also used in \cite{Jannsen92}, reduces the Hodge standard conjecture to the positivity of the involution restricted to the transcendental part. 
We deduce the positivity of the involution from the positivity of the Rosati involution of the Kuga--Satake abelian variety associated with $X$. To do so, we shall relate two involutions and this is done by using the main results in \cite{Ito-Ito-Koshikawa}: our proof of the Tate conjecture for $X^2$ in \cite{Ito-Ito-Koshikawa} actually yields a surjection of algebraic groups over $\Q$
\[
I \to \textnormal{``motivic isometries of the transcendental part''}, 
\]
where $I$ is Kisin's algebraic group as in \cite[Section 8.1]{Ito-Ito-Koshikawa} (denoted by $I^{\KS}$ in the main body of this paper) inside the automorphism group of the Kuga--Satake abelian variety attached to $X$.
As each side is related to the corresponding involution respectively, this surjection enables us to translate the positivity of the Rosati involution. 

Note that our lifting result in \cite{Ito-Ito-Koshikawa} translated using the result of the previous paragraph provides a lifting of a given cycle on $X^2$ to characteristic $0$ if the $\Q$-subalgebra generated by the cycle inside the motivic endomorphism ring of $X$ is stable under the involution; cf.\ Theorem \ref{Theorem:CM lifting revisited}. This in particular shows that the Hodge standard conjecture is true for such cycles on $X^2$ by a different way. However, as far as we can see, some additional trick like the one above is necessary to complete the proof of Theorem \ref{Theorem:Hodge standard for squares, intro}. 

\begin{rem}
It would be clear that this work is closely related to Ziquan Yang's note \cite{Yang}, which provides a variant of the argument of \cite{Ito-Ito-Koshikawa} (in characteristic $p \geq 5$). 
Let us only mention the following consequence. 
Let $X$ be a K3 surface over a finite field of characteristic $p\geq5$. 
According to \cite[Remark 3.9]{Yang}, the standard conjectures for $X^2$ established in this work imply that ``derived transcendental $\ell$-isogenies\footnote{Derived isogenies in his sense are compositions of isogenies that are induced from equivalences of derived categories of certain coherent sheaves.}'' span the endomorphism ring of the transcendental motive of $X$; this is an analogue of Huybrechts's result for the Hodge conjecture \cite{HuybrechtsMotive}. 
(Our proof of the Tate conjecture for $X^2$ \cite{Ito-Ito-Koshikawa} with \cite{HuybrechtsMotive} only tells us that specializations of ``derived isogenies'' in characteristic 0 span the endomorphism ring.)  
\end{rem}

\begin{rem}\label{Remark:Milne}
One may compare our work with results of Milne \cite{Milne99, Milne02, Milne22}. 
In \cite{Milne99, Milne02}, it is shown that the Hodge conjecture for all abelian varieties with complex multiplication implies the Tate and Hodge standard conjectures for abelian varieties over finite fields. In more recent \cite{Milne22}, he explained that the Hodge conjecture for the powers of a single abelian variety with complex multiplication satisfying a certain condition implies the Tate and Hodge standard conjectures for the powers of its reduction, giving unconditional interesting results as an application of the work of Markman on Weil classes \cite{Markman}. On the other hand, our works may be summarized that the Hodge conjecture for the squares of all K3 surfaces with complex multiplication (proved by Mukai and Buskin \cite{Buskin}) implies the Tate and Hodge standard conjectures for the squares of K3 surfaces over finite fields. 
\end{rem}

Finally, let us consider higher powers $X^n$ of $X$. 
In some situations, it is possible to reduce the Hodge standard conjecture for $X^n, n\geq 3$ to the case of $X^2$. 
For instance, we obtain the following result. 

\begin{thm}[Corollary \ref{Corollary:Hodge standard for Picard number 17}]
Let $X$ be a K3 surface over an algebraically closed field with Picard number $\rho (X)\geq 17$. 
Conjecture \ref{Conjecture:Hodge standard} holds true for an arbitrary power $X^n$ and every ample line bundle on $X^n$. 
\end{thm}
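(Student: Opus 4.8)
The goal is to reduce the Hodge standard conjecture for $X^n$ to the already-established case of $X^2$ (Theorem~\ref{Theorem:Hodge standard for squares, intro}), the point being that $\rho(X)\ge 17$ forces the transcendental part of the motive of $X$ to be extremely small. As in the proof of Theorem~\ref{Theorem:Hodge standard for squares, intro}, the first step is a reduction to a finite base field: over an algebraically closed field of characteristic $0$ the conjecture is classical Hodge theory, while in characteristic $p$ the surface $X$ together with a chosen ample line bundle on $X^n$ and finitely many algebraic cycles descends to a K3 surface over a finite field whose Picard number is still $\ge 17$ (specialization can only raise it); since cycle classes, intersection numbers, and primitivity with respect to the polarization all commute with specialization, Conjecture~\ref{Conjecture:Hodge standard} for $X^n$ over $\overline{k}$ follows from the finite-field case. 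So assume $k$ is finite and, after enlarging it, that $t(X)$ in $h(X)=\mathbf{1}\oplus\mathbf{1}(-1)^{\oplus\rho(X)}\oplus t(X)\oplus\mathbf{1}(-2)$ is a simple numerical motive with endomorphism field $E$ (Theorem~\ref{Theorem:structure of endomorphism ring}); put $b:=\dim_{\Q}t(X)=22-\rho(X)\le 5$.

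The structural input uses $b\le 5$ decisively. If $b=0$ the surface is supersingular, $h(X^n)$ is a sum of Tate motives, and the conjecture reduces to the Hodge index theorem for $X$ (with the $X^2$ case to handle arbitrary polarizations). Otherwise $b\ge 2$, and by Zarhin's dichotomy $E$ is totally real or CM. If $E$ is totally real, then $b\ge 3[E:\Q]$ forces $E=\Q$ and $b\in\{3,4,5\}$; in that case the motivic Galois group of $t(X)$ must be the full orthogonal group $\mathrm{O}(q)$ or $\mathrm{SO}(q)$ of the cup-product form $q$ on $t(X)(1)$, an orthogonal group of rank at most $5$ having no positive-dimensional proper algebraic subgroup whose standard representation has endomorphism algebra $\Q$. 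If $E$ is CM, then $\dim_E t(X)=1$ (the Hodge numbers of $t(X)$ being $1,b-2,1$) forces $b\in\{2,4\}$. In the totally real case the first fundamental theorem of invariant theory shows that the only algebraic classes in the tensor powers $t(X)^{\otimes m}$---and hence in $h(X^n)=h(X)^{\otimes n}$, which is a sum of Tate twists of such powers---are generated by $q$, its dual, the identity endomorphism, and, when $b\le m$ and $\mathrm{SO}(q)$ is the full group, the determinant class $\wedge^b t(X)(1)\xrightarrow{\sim}\mathbf{1}$; all of these are already accounted for on $X$, $X^2$, or $X^b$. Equivalently, via the Kuga--Satake construction---promoted to an algebraic correspondence by the main results of \cite{Ito-Ito-Koshikawa}---$t(X)$ is a direct summand, up to Tate twist, of $\End(h^1(A))$ for the Kuga--Satake abelian variety $A$, and $b\le 5$ forces the isogeny factor of $A$ carrying $t(X)$ to have dimension $\le 2^{b-2}\le 8$ and to be isogenous to a power of an abelian variety $B$ with $\dim B\le 2$; thus $h(X^n)$ is a polarized direct summand of a motive built by tensor operations from $h(B)$.

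Granting this, I would complete the argument as follows. In the totally real case, the proof of Theorem~\ref{Theorem:Hodge standard for squares, intro} extends to $X^n$: the involution formula expressing the Hodge standard pairing on $X^n$ through the positivity of a natural involution on the transcendental part of $h(X^n)$ still applies, and, the surjection $I^{\KS}\twoheadrightarrow$ ``motivic isometries'' being functorial, that involution is again governed by the Rosati involution of the Kuga--Satake abelian variety $A$, whose positivity is classical; the smallness $b\le 5$---equivalently, that the $t(X)$-part of $A$ is a power of an abelian variety of dimension $\le 2$---is what keeps the higher-tensor-power combinatorics under control and lets one pass from the product polarization to an arbitrary ample line bundle on $X^n$, using Theorem~\ref{Theorem:Hodge standard for squares, intro} for all polarizations of $X^2$ and, where convenient, known cases of the Hodge standard conjecture for abelian varieties of dimension $\le 2$ (cf.\ \cite{Milne02, Ancona, Koshikawa}). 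The determinant class, when present, satisfies the required positivity by the direct computation that the discriminant of $q$ has sign $(-1)^b$---visible on a CM lift, where $q$ has signature $(2,b-2)$---so that $(-1)^b$ times its self-intersection is positive. In the CM case $b\in\{2,4\}$ I would instead lift $X$, the ample line bundle, and the relevant cycles to a CM K3 surface over a number field via Theorem~\ref{Theorem:CM lifting revisited} and its extension to $X^n$---the $\Q$-subalgebras the cycles generate inside the motivic endomorphism algebras being built from $E$ and stable under complex conjugation, the relevant involution---and conclude from the Hodge standard conjecture in characteristic $0$ together with the compatibility of the intersection pairing under specialization.

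The step I expect to be the main obstacle is the passage from the product polarization to an arbitrary ample line bundle on $X^n$: one must transport positivity through the Kuga--Satake summand while matching the ample cone of $X^n$ with that of $B^{N}$, and check that the higher tensor powers $t(X)^{\otimes m}$ with $m\ge 3$ create no new positivity obstruction. This is exactly where the hypothesis $\rho(X)\ge 17$, i.e.\ $\dim t(X)\le 5$, i.e.\ the reduction of the transcendental part to an abelian variety of dimension $\le 2$, is indispensable---for larger transcendental parts the auxiliary abelian variety jumps to dimension $\ge 4$ and the Hodge standard conjecture for its powers is no longer available. A secondary point requiring care is the structural dichotomy itself: ruling out exotic sub-monodromy in the low-rank orthogonal case, and verifying the stability of the relevant endomorphism subalgebras under the involution in the CM case.
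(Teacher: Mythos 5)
Your proposal identifies the correct reduction to finite fields and the correct intuition that $\rho(X)\ge 17$ keeps the transcendental part small, but it misses the central concept the paper actually uses, and several of its structural claims do not hold over finite fields.

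The paper's route is entirely different: it introduces the notion of a \emph{neat} K3 surface (Definition~\ref{Definition:neat K3}) --- one for which, after enlarging the base field, the multiplicative group $\Gamma(X)$ generated by the Frobenius eigenvalues $\alpha_1,\dots,\alpha_d$ on $T(X)_\ell$ is free of the maximal possible rank $d(X)$. For such $X$ the paper shows (Proposition~\ref{Proposition:Tate conjecture for neat K3}) that all algebraic classes on $X^n$ come from $X^2$, and then (Theorem~\ref{Theorem:Hodge standard conjecture for neat K3 surface}) uses the freeness of $\Gamma(X)$ to orthogonally decompose $\Hom(\mathbbm{1},\mathfrak{t}(X)^{\otimes 2n})\otimes\R$ into pieces indexed by pairings of eigenvalues $(\alpha_i,\alpha_i^{-1})$, each of which embeds into a tensor product of quadratic spaces governed by the $X^2$ case, so the positivity established in Theorem~\ref{Theorem:Hodge standard for squares, intro} propagates. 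The hypothesis $\rho\ge 17$ enters only at the very end: the specialization to a finite field has geometric Picard number $\ge 18$ (it is always even), hence $2d(X)e(X)\le 4$, hence $d(X)\le 2$, and Proposition~\ref{Proposition:example of neat K3}(2) shows $d(X)\le 2$ forces neatness by an elementary Galois-theoretic argument on $\Gamma(X)$. An intermediate step you should know about is Lemma~\ref{Lemma:reduction to transcendental}, which reduces the conjecture for $X^n$ and \emph{any} ample line bundle to the positivity of $(-1)^i\langle-,-\rangle^{\otimes i}$ on $\Hom(\mathbbm{1},\mathfrak{t}(X)^{\otimes i})$; the polarization-independence you flag as the main obstacle is handled cleanly here because every ample bundle on $X^n$ is a sum of pullbacks.

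Concretely, the gaps in your proposal are these. First, the dichotomy ``$E$ totally real or CM'' is a characteristic-zero (Zarhin) statement about Hodge endomorphism fields; over a finite field of finite height, Theorem~\ref{Theorem:structure of endomorphism ring} and Proposition~\ref{Proposition:L-function of K3} show the center $\Q[\Fr_X]$ is \emph{always} a CM field, so your totally-real branch never occurs and your CM branch's appeal to Hodge numbers $(1,b-2,1)$ of $t(X)$ has no meaning over $\F_q$. Second, the ``motivic Galois group of $t(X)$'' is not available in the category of numerical motives without already assuming the standard conjectures, so the invariant-theory argument is circular in this setting; what replaces it is precisely the combinatorics of $\Gamma(X)$, which is a statement about eigenvalues, not about a group acting on a fiber functor. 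Third, the claimed reduction of $t(X)$ to a power of an abelian variety $B$ of dimension $\le 2$ inside the Kuga--Satake abelian variety is unsupported and is not what the paper does; the Kuga--Satake input (Theorem~\ref{Theorem:compare with Kisin's algebraic group}) is used only to establish the positivity of the involution $\iota$ on $\End(\mathfrak t(X))$ in the $X^2$ case, after which higher powers are handled purely combinatorially. In short, without the neatness mechanism there is no way to control $\Hom(\mathbbm{1},\mathfrak{t}(X)^{\otimes m})$ for $m\ge 3$, and your sketch of that control does not go through.
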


A more general version (over finite fields) is formulated in Theorem \ref{Theorem:Hodge standard conjecture for neat K3 surface} with the notion of \emph{neat} K3 surfaces introduced in Definition \ref{Definition:neat K3}. 
The notion of neat K3 surfaces essentially appeared in previous works such as \cite{Milne19, Zarhin93}, and in particular Zarhin proved the Tate conjecture for arbitrary powers of an ordinary K3 surface over a finite field by showing that any ordinary K3 surface is neat. 
(See Subsection \ref{Subsection:Examples} for further examples of neat (and nonneat) K3 surfaces.)
We basically apply the same idea to the Hodge standard conjecture. 

Let us note that the corresponding notion is studied more in the context of abelian varieties over finite fields, e.g. \cite{Zarhin94, Zarhin15}. The condition $\rho (X)\geq 17$ above is inspired by the case of abelian surfaces considered in \cite{Zarhin94}. 

This paper is organized as follows.
In Section \ref{Section:The transcendental motive of a K3 surface over a finite field}, we define the (numerical) transcendental motive of a K3 surface over a finite field and investigate some basic properties of it.
In Section \ref{Section:CM liftings of K3 surfaces revisited},
we show the positivity of the natural involution of the endomorphism ring of the transcendental motive.
Along the way, we prove a slightly refined version of our result on CM liftings obtained in \cite{Ito-Ito-Koshikawa}.
We also give a reformulation of it without involving the Kuga--Satake construction.
In Section \ref{Section:The Hodge standard conjecture for the square of a K3 surface}, we complete the proof of Theorem \ref{Theorem:Hodge standard for squares, intro}.
Finally, in Section \ref{Section:Self-products of neat K3 surfaces},
we introduce and study neat K3 surfaces over finite fields, and prove the Hodge standard conjecture for arbitrary powers of neat K3 surfaces.

\section{The transcendental motive of a K3 surface over a finite field}\label{Section:The transcendental motive of a K3 surface over a finite field}

We study the transcendental motive of a K3 surface $X$ over a finite field in this section.
We especially prove that if $X$ is of finite height, then the numerical transcendental motive $\mathfrak{t}(X)$ of $X$ is simple, and the endomorphism ring $\End(\mathfrak{t}(X))$ is a finite dimensional division algebra over $\Q$ whose center is the $\Q$-subalgebra generated by the Frobenius; see Theorem \ref{Theorem:structure of endomorphism ring}.
In Section \ref{Subsection:Remarks on the transcendental motives in characteristic 0}, some related remarks on the transcendental motive of a K3 surface in characteristic $0$ are discussed.

\subsection{The transcendental motive}\label{Subsection:The transcendental motive}

Let $\F_q$ be a finite field with $q$ elements of characteristic $p>0$ and $\overline{\F}_q$ an algebraic closure of $\F_q$.
Let $X$ be a K3 surface over $\F_q$,
$X^2:=X \times_{\Spec \F_q} X$, and $X^2_{\overline{\F}_q}:=X^2 \times_{\Spec \F_q} \Spec \overline{\F}_q$.
(Throughout this paper,
for a scheme $X$ over a commutative ring $R$ and a ring homomorphism $R \to S$,
let $X_S$ denote the base change $X \times_{\Spec R} \Spec S$.
We use the same notation for the base change of an algebraic group, a module, etc.
We sometimes omit the subscript when no confusion can arise.)

We begin by recalling the following theorem from \cite{Ito-Ito-Koshikawa}.
The abelian group of algebraic cycles of codimension $i$ on $X^2$ 
is denoted by $Z^i(X^2)$.

\begin{thm}[\cite{Ito-Ito-Koshikawa}]\label{Theorem:Tate conjecture for square}
The Tate conjecture holds true for $X^2$.
Namely, the $\ell$-adic cycle class map
\[
Z^i(X^2)\otimes_\Z \Q_\ell \to H^{2i}(X^2, \Q_\ell)(i)^{\Gal(\overline{\F}_q/\F_q)}
\]
is surjective for every $i$ and every $\ell \neq p$,
where we write $H^{2i}(X^2, \Q_\ell):=H^{2i}_\et(X^2_{\overline{\F}_q}, \Q_\ell)$ and $(i)$ denotes the Tate twist.
The same statement holds for the crystalline cycle class maps.
\end{thm}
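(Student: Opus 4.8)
The plan is to reduce, by the Künneth formula and a few elementary correspondences, to the ``transcendental part'' in codimension two, and then to transfer algebraic cycles down from a complex CM lift of $X$ via the Hodge conjecture for K3 squares (Mukai and Buskin \cite{Buskin}). Throughout one uses that the Tate conjecture is insensitive to finite extension of the base field, so we may enlarge $\F_q$ freely.

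\emph{Step 1: reduction to the transcendental part.} A K3 surface has $H^1=H^3=0$, $H^0=\Q_\ell$ and $H^4=\Q_\ell(-2)$, so the Künneth decomposition of $H^{2i}(X^2,\Q_\ell)$ is simple. The cases $i\in\{0,4\}$ are trivial; $i=1$ follows from the Tate conjecture for the K3 surface $X$ itself (known); $i=3$ follows from $i=1$ by hard Lefschetz and Poincaré duality. For $i=2$ one has $H^4(X^2,\Q_\ell(2))\cong \Q_\ell\oplus\Q_\ell\oplus\bigl(H^2(X_{\overline{\F}_q},\Q_\ell(1))^{\otimes 2}\bigr)$, the first two summands spanned by $[X\times\{\mathrm{pt}\}]$ and $[\{\mathrm{pt}\}\times X]$. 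Writing $H^2(X_{\overline{\F}_q},\Q_\ell(1))=V_\ell\oplus T_\ell$, where $V_\ell$ is the (Galois-stable) span of the image of $\NS(X_{\overline{\F}_q})$, equal to the span of all Tate classes by the Tate conjecture for $X$, and $T_\ell$ its orthogonal complement under Poincaré duality, one checks that $V_\ell\otimes V_\ell$ is spanned by classes of products of divisors, hence algebraic, while the mixed parts $V_\ell\otimes T_\ell$ and $T_\ell\otimes V_\ell$ carry no Tate classes (the eigenvalues of $\Frob_q$ on $V_\ell$ are roots of unity, those on $T_\ell$ are not). Hence the Tate conjecture for $X^2$ reduces to: every Tate class in $(T_\ell\otimes T_\ell)^{\Gal(\overline{\F}_q/\F_q)}$ --- equivalently, using the Poincaré pairing to identify this space with $\End_{\Gal(\overline{\F}_q/\F_q)}(T_\ell)$, every Galois-equivariant endomorphism of $T_\ell$ --- is induced by an algebraic correspondence on $X^2$.

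\emph{Step 2: CM lifting and lifting of the endomorphisms.} The supersingular case is immediate, since an infinite-height K3 surface has Picard number $22$ over $\overline{\F}_q$, so $T_\ell=0$ and Step 1 already finishes. Assume $X$ has finite height. After enlarging $\F_q$, one invokes the CM lifting theorem: there is a finite extension $K/\Q_p$ and a smooth proper model over $O_K$ of a K3 surface with special fiber $X_{\F_q}$ whose generic geometric fiber $\widetilde{X}$ has complex multiplication --- its Kuga--Satake abelian variety is of CM type, equivalently the Mumford--Tate group of the transcendental Hodge structure $T(\widetilde{X}_\C)$ (for a fixed embedding $\overline{K}\hookrightarrow\C$) is a torus. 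Smooth and proper base change at $\ell\neq p$ and the crystalline comparison at $p$ --- this is where the integral theory of $\GSpin$ Shimura varieties and the Kuga--Satake construction in mixed characteristic enter --- identify $T_\ell$ with the reduction of the $\ell$-adic realization of the transcendental motive of $\widetilde{X}$, compatibly with cycle class maps and with the relevant endomorphism structures, and matching $\Frob_q$ (up to a power absorbed by the base extension) with a CM Weil number. The CM hypothesis, together with the semisimplicity of Frobenius on $H^2_\ell$, then forces $\End_{\Gal(\overline{\F}_q/\F_q)}(T_\ell)$ to consist of the reductions of endomorphisms of the rational Hodge structure $T(\widetilde{X}_\C)$; hence every Tate class in $(T_\ell\otimes T_\ell)^{\Gal(\overline{\F}_q/\F_q)}$ is the reduction of a Hodge class in $T(\widetilde{X}_\C)\otimes T(\widetilde{X}_\C)$.

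\emph{Step 3: Mukai--Buskin and specialization; main obstacle.} By the Hodge conjecture for self-products of complex K3 surfaces (Mukai and Buskin \cite{Buskin}), which we need only for the CM K3 surface $\widetilde{X}$, each Hodge class produced in Step 2 is represented by an algebraic cycle on $\widetilde{X}_\C^{2}$. Spreading out the model and these cycles over a subring of finite type of $\C$, specializing at a place above $p$, and using compatibility of cycle class maps with specialization, one obtains algebraic cycles on $X_{\F_q}^{2}$ (over the enlarged field) realizing the prescribed Tate classes in $\ell$-adic cohomology and, by the same argument with the crystalline comparison, in crystalline cohomology; with Step 1 this proves the Tate conjecture for $X^2$ over the enlarged field, and descent along the finite extension (averaging a Galois-invariant algebraic cycle class over the finite Galois group produces a cycle rational over the original $\F_q$) gives the stated result. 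The main obstacle is Step 2: constructing the characteristic-zero CM lift and, above all, pinning down the precise compatibility of its $\ell$-adic and crystalline realizations with those of $X$ --- the matching of Frobenius with a CM Weil number, the compatibility of cycle classes, and the resulting description of $\End_{\Gal(\overline{\F}_q/\F_q)}(T_\ell)$ as reductions of Hodge endomorphisms --- which is the hard $p$-adic Hodge-theoretic content of \cite{Ito-Ito-Koshikawa} (integral canonical models of $\GSpin$ Shimura varieties, good-reduction criteria, and the Kuga--Satake construction in families), whereas Steps 1 and 3 are essentially formal.
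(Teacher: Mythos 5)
The paper does not prove this theorem itself; the ``proof'' consists of a citation to \cite[Theorem 1.5]{Ito-Ito-Koshikawa}. Your outline does capture the high-level strategy of that reference (reduce by K\"unneth to the transcendental part in codimension two, construct a CM lift, invoke Mukai--Buskin for the Hodge conjecture on the CM K3 square, spread out and specialize), and you correctly locate the hard $p$-adic Hodge-theoretic content. Steps 1 and 3 are sound.

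The genuine gap is in Step 2, where you claim that a \emph{single} CM lift $\widetilde{X}$, together with semisimplicity of Frobenius, ``forces $\End_{\Gal(\overline{\F}_q/\F_q)}(T_\ell)$ to consist of the reductions of endomorphisms of the rational Hodge structure $T(\widetilde{X}_\C)$.'' This cannot hold in general. For a CM K3 surface, $\End_{\mathrm{Hdg}}(T(\widetilde{X}_\C))$ is a \emph{commutative} CM field $E$ with $[E:\Q]=\dim_{\Q_\ell}T_\ell$. On the other hand, the commutant of Frobenius $\End_{\Frob_q}(T_\ell)$ is, over $\overline{\Q}_\ell$, a product $\prod_{i=1}^{2d(X)}M_{e(X)}(\overline{\Q}_\ell)$ of total dimension $2d(X)e(X)^2$, while $\dim T_\ell=2d(X)e(X)$ (cf.\ Proposition~\ref{Proposition:L-function of K3} and Corollary~\ref{Corollary:rank of motivic endomorphism ring}). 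When $e(X)>1$ --- which does occur, cf.\ Remark~\ref{Remark:compare with characteristic p case} and Example~\ref{Example:d and e change} --- the $\Q_\ell$-span of the reductions of Hodge endomorphisms from one CM lift is a proper maximal commutative subalgebra of $\End_{\Frob_q}(T_\ell)$ and therefore does not account for all Tate classes in $(T_\ell\otimes T_\ell)^{\Gal(\overline{\F}_q/\F_q)}$. The actual argument in \cite{Ito-Ito-Koshikawa}, reflected here in the proof of Theorem~\ref{Theorem:compare with Kisin's algebraic group}, instead constructs a \emph{different} CM lift adapted to each semisimple $\eta\in I^{\KS}(\Q)$ (every element is semisimple because $(I^{\KS}/\G_m)(\R)$ is compact) and then uses Zariski density of $I^{\KS}(\Q)$ in $I^{\KS}_{\overline{\Q}_\ell}$ so that the images $\tau_\ell(\eta)$ generate the full centralizer $\SO_{\Frob_q}(T_\ell)$. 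The missing idea is this family of CM lifts parametrized by maximal tori of Kisin's reductive group $I^{\KS}$, rather than one lift; no single CM lift can span the noncommutative commutant of Frobenius.
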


\begin{proof}
See \cite[Theorem 1.5]{Ito-Ito-Koshikawa}.
\end{proof}

\begin{cor}\label{Corollary:Conjecture D for squares over finite fields}
Numerical equivalence coincides with $\ell$-adic homological equivalence for algebraic cycles on $X^2$ for every $\ell \neq p$.
The same statement holds for the crystalline cohomology.
\end{cor}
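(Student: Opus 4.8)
The plan is to deduce this formally from the Tate conjecture for $X^2$ (Theorem~\ref{Theorem:Tate conjecture for square}) together with the semisimplicity of the Frobenius action, in the standard manner going back to Tate. Write $\Gamma := \Gal(\overline{\F}_q/\F_q)$, let $F \in \Gamma$ be the geometric Frobenius, and recall $\dim X^2 = 4$. First I would record the elementary inclusion: a cycle that is $\ell$-adically homologically equivalent to $0$ is numerically equivalent to $0$, since the degree of a $0$-cycle is computed by the cup product of cycle classes under Poincaré duality. So it remains to prove the converse, which amounts to showing that for each $i$ the pairing $A^i_\ell(X^2) \times A^{4-i}_\ell(X^2) \to \Q$, $(\alpha, \beta) \mapsto \deg(\alpha \cdot \beta)$, is nondegenerate.

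To prove this nondegeneracy I would use Theorem~\ref{Theorem:Tate conjecture for square} to identify $A^i_\ell(X^2) \otimes_\Q \Q_\ell \cong H^{2i}(X^2, \Q_\ell)(i)^{\Gamma}$, and similarly in complementary codimension. The remaining input is that $F$ acts semisimply on $H^{\ast}(X^2, \Q_\ell)$: this is known on $H^2$ of a K3 surface (for instance via the Kuga--Satake correspondence) and is trivial on $H^0$ and $H^4$, and it propagates to $X^2$ because $H^{\ast}(X^2, \Q_\ell)$ is, by the Künneth formula, a direct summand of $H^{\ast}(X, \Q_\ell)^{\otimes 2}$, and semisimplicity of an operator over the characteristic-zero field $\Q_\ell$ is preserved by tensor products and by passage to direct summands. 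Granting this, $V := H^{2i}(X^2, \Q_\ell)(i)$ decomposes $\Gamma$-equivariantly as $V = V^{\Gamma} \oplus \Im(F - 1)$, and likewise for $V' := H^{2(4-i)}(X^2, \Q_\ell)(4-i)$. Poincaré duality gives a perfect $\Gamma$-equivariant pairing $V \times V' \to \Q_\ell$, and from the identity $\langle (F-1)x, y \rangle = \langle x, (F^{-1}-1)y \rangle$ one sees that $\Im(F-1) \subseteq V$ is orthogonal to $V'^{\Gamma}$ and symmetrically; hence the pairing restricts to a perfect pairing $V^{\Gamma} \times V'^{\Gamma} \to \Q_\ell$.

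Under the Tate identifications and the compatibility of cycle classes with cup products, this restricted pairing is the scalar extension to $\Q_\ell$ of the degree pairing $A^i_\ell(X^2) \times A^{4-i}_\ell(X^2) \to \Q$; nondegeneracy descends along $\Q \hookrightarrow \Q_\ell$, which gives the $\ell$-adic statement. For the crystalline assertion I would run the identical argument with $H^{\ast}(X^2, \Q_\ell)$ replaced by the rational crystalline cohomology of $X^2$, using the crystalline half of Theorem~\ref{Theorem:Tate conjecture for square}, Poincaré duality for crystalline cohomology, and the semisimplicity of the linearized $q$-power Frobenius on $H^2_{\cris}$ of a K3 surface; the only extra care is to keep track of the Frobenius eigenvalue $q^i$ (equivalently the Tate twist) so that the role of "$\Gamma$-invariants" is played by the correct Frobenius eigenspace.

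I do not expect a serious obstacle: the genuinely substantial ingredient, the Tate conjecture for $X^2$, is already in hand, so this is essentially a formal deduction (and it is also a special case of Corollary~\ref{Corollary:standard conjecture for squares}). The two points that warrant an explicit line are the semisimplicity of the Frobenius, reduced as above to the known case of $H^2$ of a K3 surface, and, in the crystalline case, the bookkeeping of Tate twists and slopes.
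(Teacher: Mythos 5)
Your proposal is correct and follows essentially the same route as the paper: the paper's proof simply cites the classical Tate-style deduction (\cite[Theorem 2.9]{Tate94}, and \cite[Lemma 10.4]{Ito-Ito-Koshikawa}) from the Tate conjecture for $X^2$ plus semisimplicity of Frobenius, which is precisely the argument you spell out, including the decomposition $V = V^{\Gamma} \oplus \Im(F-1)$, the restriction of Poincar\'e duality to the invariants, and descent of nondegeneracy from $\Q_\ell$ to $\Q$. Your remark that semisimplicity on $H^2$ of the K3 propagates to $X^2$ by K\"unneth (tensor products and direct summands preserving semisimplicity over a characteristic-zero field) is a correct and slightly more self-contained way to justify an input the paper takes for granted.
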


\begin{proof}
This is essentially in \cite{Ito-Ito-Koshikawa} (see \cite[Lemma 10.4]{Ito-Ito-Koshikawa} in particular), and follows from Theorem \ref{Theorem:Tate conjecture for square} and the semisimplicity of the action of Frobenius on the $\ell$-adic cohomology of
$X^2$; see \cite[Theorem 2.9]{Tate94}.
\end{proof}

Let $A^i(X^2)$ denote the $\Q$-vector space of $\Q$-linear algebraic cycles of codimension $i$ on $X^2$ modulo numerical equivalence.
We obtain from Corollary \ref{Corollary:Conjecture D for squares over finite fields} an inclusion
\[
A^i(X^2) \hookrightarrow H^{2i}(X^2, \Q_\ell)(i).
\]
We say that a class in $H^{2i}(X^2, \Q_\ell)(i)$ is algebraic if it lies in the image of the above injection, in which case we regard it as an element in $A^i(X^2)$.

Let
$\mathscr{M}_{\num}(k)$
denote the category of numerical motives over a field $k$ (with coefficients in $\Q$).
We now recall the definition of the transcendental motive
$\mathfrak{t}(X) \in \mathscr{M}_{\num}(\F_q)$
of $X$ (compare with the discussion in \cite[Proposition 7.2.3]{Kahn-Murre-Pedrini}).
The argument below also applies to a K3 surface over the algebraic closure $\overline{\F}_q$ of $\F_q$.
First, we have the K\"unneth decomposition of the diagonal
\[
\Delta_X = \pi_0 + \pi_2 +\pi_4
\]
in $A^2(X^2)$, where $\pi_m$ is an idempotent inducing the projection to $H^{m}(X, \Q_\ell)$ and independent of $\ell \neq p$ for every $m$.

The $\ell$-adic Chern class map gives an embedding
$\Pic(X_{\overline{\F}_q})_{\Q_\ell} \hookrightarrow H^2(X, \Q_\ell)(1)$.
Let $T(X)_{\ell}$ denote the orthogonal complement of $\Pic(X_{\overline{\F}_q})_{\Q_\ell}$
with respect to the cup product pairing on $H^2(X, \Q_\ell)(1)$.
Since the intersection pairing
$\langle -, - \rangle$
is perfect on
$\Pic(X_{\overline{\F}_q})_{\Q}$,
we have the following orthogonal decomposition
\begin{equation}\label{equation:orthogonal decomposition}
\Pic(X_{\overline{\F}_q})_{\Q_\ell} \oplus T(X)_{\ell}=H^2(X, \Q_\ell)(1).
\end{equation}
We choose an orthogonal basis $\{ D_i \}_i$  of $\Pic(X_{\overline{\F}_q})_{\Q}$, and put
\[
\pi_{\mathrm{alg}}:= \sum_{i} \frac{1}{\langle D_i, D_i \rangle} p^*_1D_i \cdot p^*_2D_i \in
A^2(X^2_{\overline{\F}_q}),
\]
where $p_1, p_2 \colon X^2 \to X$ denote the projections (and their base changes).
This correspondence is an idempotent satisfying $\pi_{\mathrm{alg}}=\pi_{\mathrm{alg}}\pi_2=\pi_2 \pi_{\mathrm{alg}}$ and induces the projection
$
H^2(X, \Q_\ell)(1) \to \Pic(X_{\overline{\F}_q})_{\Q_\ell}
$
given by $(\ref{equation:orthogonal decomposition})$.
Since $\pi_{\mathrm{alg}}$ is invariant under the action of Frobenius,
it can be regarded as an element in $A^2(X^2)$.
Then we put
$
\pi_{\tr}:=\pi_2-\pi_{\alg}
$
and the \textit{transcendental motive} of $X$
is defined to be
\[
\mathfrak{t}(X):=(X, \pi_{\tr}, 1) \in \mathscr{M}_{\num}(\F_q).
\]
The $\ell$-adic realization of $\mathfrak{t}(X)$ is well-defined by Corollary \ref{Corollary:Conjecture D for squares over finite fields}. In fact, by its construction, the $\ell$-adic realization is $T(X)_{\ell}$.

Let $W(\F_q)$ denote the ring of Witt vectors of $\F_q$.
The discussion above also applies to the crystalline cohomology and the crystalline realization of $\mathfrak{t}(X)$ is the transcendental part $T(X)_{\cris}$ of $H^2_{\cris}(X)(1)$, where $H^2_{\cris}(X):=H^2_\cris(X/W(\F_q))[1/p]$ and $(1)$ denotes the Tate twist.

\subsection{The height of a K3 surface}\label{Subsection:The height of a K3 surface}

Let $\widehat{\Br}(X)$ denote the \textit{formal Brauer group} of $X$, which is a one-dimensional smooth formal group over $\F_q$; see \cite{Artin-Mazur} (and also \cite[Section 6.3]{Ito-Ito-Koshikawa}).
The \textit{height} $h$ of $X$ is by definition the height of the formal group $\widehat{\Br}(X)$.
The height satisfies $1 \leq h \leq 10$ or $h=\infty$,
and we say that $X$ is of finite height if $1 \leq h \leq 10$ and $X$ is supersingular if $h=\infty$.

\begin{rem}\label{Remark:supersingular case}
The transcendental motive $\mathfrak{t}(X)$ is zero if and only if the rank of $\Pic(X_{\overline{\F}_q})$ is $22$.
Recall that the Tate conjecture for $X$ holds true; this is proved in \cite{MadapusiPera} for $p \geq 3$ and in \cite{Kim-MadapusiPera, Ito-Ito-Koshikawa, KimMadapusiPera-Erratum} for all $p \geq 2$ (see also
\cite[Introduction]{MadapusiPera} for previously known results).
As a result,
the rank of $\Pic(X_{\overline{\F}_q})$ is $22$ if and only if $X$ is supersingular.
(The same result holds for K3 surfaces over any algebraically closed field of characteristic $p>0$; see \cite[Theorem 4.8, Proposition 6.17]{Liedtke} and also \cite[Lemma 10.6]{Ito-Ito-Koshikawa} for more details.)
\end{rem}

We fix an embedding $\overline{\Q} \subset \overline{\Q}_p$.
Let
$\nu_q \colon \overline{\Q}_p \to \Q \cup \{ \infty \}$
denote the $p$-adic valuation normalized by $\nu_q(q)=1$.

\begin{prop}\label{Proposition:L-function of K3}
Assume that $X$ is of finite height.
Then the polynomial
\[
L_{\mathrm{trc}}(X, T):=\det(1-\Frob_q T \vert T(X)_{\ell})
\]
satisfies the following properties:
\begin{enumerate}
    \item $L_{\mathrm{trc}}(X, T)$ is in $\Q[T]$ and $\Z_\ell[T]$ for every $\ell \neq p$, and independent of $\ell \neq p$, and all its complex roots have absolute value one.
    No root of $L_{\mathrm{trc}}(X, T)$ is a root of unity.
    \item $L_{\mathrm{trc}}(X, T)=Q(T)^e$ for an irreducible polynomial $Q(T) \in \Q[T]$ of degree $2d$ and a positive integer $e >0$.
    \item There exists a positive integer $h >0$ such that $\nu_q(\alpha) \in \{ 1/h, 0, -1/h \}$ for every root $\alpha \in \overline{\Q} \subset \overline{\Q}_p$ of $L_{\mathrm{trc}}(X, T)$.
    Moreover, there are exactly $h$ roots $\alpha$ with $\nu_q(\alpha)=-1/h$.
    \item We put
    \[
    Q_{<0}(T):=\prod_{\nu_q(\alpha)=-1/h}(1-\alpha T),
    \]
    where $\alpha \in \overline{\Q} \subset \overline{\Q}_p$ runs over the roots of $Q(T)$ satisfying $\nu_q(\alpha)=-1/h$.
    Then $Q_{<0}(T)$ is an irreducible polynomial in $\Q_p[T]$.
\end{enumerate}
\end{prop}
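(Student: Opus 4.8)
The plan is to study $L_{\mathrm{trc}}(X,T)$ simultaneously as the reversed characteristic polynomial of $\Frob_q$ on the $\ell$-adic realization $T(X)_\ell$ and on the crystalline realization $T(X)_{\cris}$ of $\mathfrak{t}(X)$; these agree because $\pi_{\tr}$ and $\pi_{\alg}$ are algebraic cycles, and since $X$ has finite height the crystalline side supplies the Newton-polygon data behind (3) and (4). For part (1): writing $L_{\mathrm{trc}}(X,T)=\det(1-\Frob_qT\mid H^2(X,\Q_\ell)(1))/\det(1-\Frob_qT\mid\Pic(X_{\overline{\F}_q})_{\Q_\ell})$, the numerator is $P(T/q)$ for the $\ell$-independent integral polynomial $P(T)=\det(1-\Frob_qT\mid H^2(X,\Q_\ell))$ (Deligne), and the denominator is the reversed characteristic polynomial of the integer matrix by which $\Frob_q$ acts on the free group $\NS(X_{\overline{\F}_q})$, so $L_{\mathrm{trc}}$ is $\ell$-independent with rational coefficients; $\Z_\ell$-integrality follows from the $\Frob_q$-stable lattice $(H^2_\et(X_{\overline{\F}_q},\Z_\ell)/\mathrm{torsion})(1)\cap T(X)_\ell$; purity of weight $2$ gives the absolute-value statement; and if a root $\alpha$ satisfied $\alpha^n=1$ then $0\neq T(X)_\ell^{\Frob_{q^n}}\subseteq\Pic(X_{\overline{\F}_q})_{\Q_\ell}$ by the Tate conjecture for $X$ over $\F_{q^n}$ (Remark \ref{Remark:supersingular case}), contradicting $T(X)_\ell\cap\Pic(X_{\overline{\F}_q})_{\Q_\ell}=0$.

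For part (2): Frobenius acts semisimply on $H^2(X,\Q_\ell)$ (as used in the proof of Corollary \ref{Corollary:Conjecture D for squares over finite fields}), so by Theorem \ref{Theorem:Tate conjecture for square} and Corollary \ref{Corollary:Conjecture D for squares over finite fields} the algebra $\Q[\Frob]\subseteq\End(\mathfrak{t}(X))\otimes\Q_\ell$ is a product $\prod_jK_j$ of number fields, giving an orthogonal decomposition $T(X)_\ell=\bigoplus_jT_j$ into $\Frob$-stable, individually nondegenerate subspaces. Passing to crystalline cohomology, the slope-$(-1/h)$ piece $V^{<0}$ of $T(X)_{\cris}$ is a sub-$F$-isocrystal over $\F_q$ that is isoclinic of slope $-1/h$ with $W(\F_q)[1/p]$-dimension exactly $h$; since any nonzero sub-isocrystal has dimension divisible by $h$, $V^{<0}$ is simple over $\F_q$, and $\End_{\F_q}(V^{<0})$ is a division algebra over $\Q_p$. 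Hence exactly one idempotent $e_{j_0}$ of $\Q[\Frob]$ acts as the identity on $V^{<0}$, so $V^{<0}\subseteq T_{j_0,\cris}$; as $T_{j_0,\cris}$ is nondegenerate its slope-$(1/h)$ part also has dimension $h$, which is all of the slope-$(1/h)$ part of $T(X)_{\cris}$, so for $j\neq j_0$ the summand $T_j$ carries neither slope $1/h$ nor slope $-1/h$. The key step is then to see that, for $j\neq j_0$, every root $\alpha$ of $Q_j$ has $p$-adic valuation $0$ at \emph{every} place above $p$: at the fixed place this is the slope dichotomy just obtained, and any other place is reached through an automorphism $\tau$ of $\overline{\Q}$, for which $\tau\alpha$ is again a root of $Q_j$ and therefore has slope $0$ at the fixed place by the same dichotomy. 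Such an $\alpha$ is a unit at every finite place (a unit at $\ell\neq p$ because $\Frob_q$ is an automorphism of $H^2_\et(X_{\overline{\F}_q},\Z_\ell)$) with all archimedean absolute values $1$, hence a root of unity by Kronecker's theorem, contradicting part (1). So $\Q[\Frob]=K$ is a field, $L_{\mathrm{trc}}(X,T)=Q^e$ with $Q$ the minimal polynomial of $\Frob$, and $\deg Q$ is even because $\Frob$ lies in the orthogonal group of the cup product on $T(X)_\ell$ while the eigenvalues $\pm 1$ are excluded by part (1).

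Part (3) is then a direct reading of the Newton polygon: for a K3 surface of height $h$, $H^2_{\cris}(X)(1)$ has slopes $-1/h$, $0$, $1/h$ with multiplicities $h$, $22-2h$, $h$; deleting the slope-$0$ algebraic part $\Pic(X_{\overline{\F}_q})_{\cris}$ (of rank $\rho$) leaves $T(X)_{\cris}$ with slopes $-1/h$, $0$, $1/h$ of multiplicities $h$, $22-2h-\rho$, $h$, which yields the valuation statement with exactly $h$ roots of valuation $-1/h$; here $h$ is the height, so $1\le h\le 10$. For part (4), by part (2) $\Frob_q$ acts on the simple $F$-isocrystal $V^{<0}$ and hence lies in the division algebra $\End_{\F_q}(V^{<0})$ over $\Q_p$, so $\Q_p[\Frob_q|_{V^{<0}}]$ is a subfield of a division algebra and the minimal polynomial of $\Frob_q|_{V^{<0}}$ over $\Q_p$ is irreducible; since $\Frob_q$ acts semisimply on $T(X)_{\cris}$ and $V^{<0}$ is spanned by the eigenspaces on which $\nu_q=-1/h$, this minimal polynomial is the squarefree polynomial with root set $\{\alpha:\nu_q(\alpha)=-1/h\}$, i.e.\ $Q_{<0}(T)$ up to reversal; hence $Q_{<0}(T)$ is irreducible in $\Q_p[T]$.

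The step I expect to be hardest is the global conclusion in part (2): the division-algebra constraint on the isoclinic crystalline piece is easy, but upgrading "only one eigenvalue-field survives on $V^{<0}$" to "$\Q[\Frob]$ is a field" requires the observation that any further summand would consist of global units of weight $0$, hence of roots of unity, in conflict with part (1); one also has to arrange the argument so that it does not circularly use the structure theorem for $\End(\mathfrak{t}(X))$ (Theorem \ref{Theorem:structure of endomorphism ring}) that this proposition is meant to support.
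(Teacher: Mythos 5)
Your proof takes a genuinely different route from the paper: the paper's own proof of (2)--(4) is a one-line citation to Yu--Yui and Taelman, whereas you give a self-contained argument built on the slope decomposition of $T(X)_{\cris}$, the simplicity of the isoclinic slope-$(\mp 1/h)$ pieces, and a Kronecker-type global argument. The overall strategy is sound and, in fact, the Kronecker step nicely explains why only one number field can survive. Parts (1), (3), and (4) are correct as written (the observation that $\End_{\F_q}(V^{<0})$ is a $\Q_p$-division algebra is exactly the right way to get irreducibility of $Q_{<0}$ over $\Q_p$, and you correctly track that $\Frob_q=F^a$ is $K_0$-linear and commutes with $F$).

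There is, however, one genuine gap in part (2): you assert without argument that the decomposition $T(X)_\ell=\bigoplus_j T_j$ coming from the idempotents of $\Q[\Frob]=\prod_j K_j$ is ``orthogonal, individually nondegenerate.'' This is not automatic. Since the adjoint of $\Frob$ for the cup product is $\Frob^{-1}$, the pairing induces an involution of $\Q[\Frob]$ sending $\Frob\mapsto\Frob^{-1}$; $T_j$ pairs only with $T_{j'}$ where $e_{j'}=\iota(e_j)$, so $T_j$ is nondegenerate exactly when the roots of $Q_j$ are stable under $\alpha\mapsto\alpha^{-1}$. This does hold here, but it needs to be said: by the Weil bound every archimedean embedding $\sigma$ satisfies $|\sigma(\alpha)|=1$, so $\overline{\sigma(\alpha)}=\sigma(\alpha)^{-1}=\sigma(\alpha^{-1})$, and hence $\alpha^{-1}$ is a Galois conjugate of $\alpha$ and $Q_j$ is self-reciprocal. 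You use the nondegeneracy of $T_{j_0,\cris}$ precisely to push the slope-$(1/h)$ part $V^{>0}$ into $T_{j_0}$, so the step is load-bearing. Alternatively, you can sidestep nondegeneracy entirely: $V^{>0}$, being simple, lies in a single $T_{j_1}$; if $j_1\neq j_0$ then $T_{j_1,\cris}$ contains no slope-$(-1/h)$ part, so all roots of $Q_{j_1}$ have $\nu_q\ge 0$ at every place above $p$, and are hence algebraic integers with all archimedean absolute values $1$ and unit at $\ell\neq p$; Kronecker forces them to be roots of unity, which contradicts (1) because $V^{>0}$ contributes a root of strictly positive valuation. Either repair closes the gap and the rest of your argument goes through.
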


\begin{proof}
(1) follows from the Tate and Weil conjectures for $X$ and the semisimplicity of Frobenius.
The remaining statements are checked in the proof of \cite[Proposition 3.2]{Yu-Yui}.
See also \cite[Theorem 1]{Taelman}.
\end{proof}

\begin{defn}\label{Definition:d, e, h}
Let $X$ be a K3 surface of finite height over $\F_q$.
We write
\[
d(X):=d, \quad e(X):=e, \quad \text{and} \quad  h(X):=h
\]
for the positive integers $d, e, h$ in Proposition \ref{Proposition:L-function of K3}.
\end{defn}

\begin{rem}
For a finite extension $\F_{q^m}$ of $\F_q$, $h(X_{\F_{q^m}})=h(X)$ holds, while $d(X_{\F_{q^m}}) \leq d(X)$, $e(X) \leq e(X_{\F_{q^m}})$ and
the inequalities can be strict; see Example \ref{Example:d and e change}.
\end{rem}

Let $K_0:=W(\F_q)[1/p]$.
Recall that for an $F$-isocrystal $M$ over $K_0$,
we have a slope decomposition
\[
M = \bigoplus_{\lambda \in \Q} M_\lambda
\]
where $M_{\lambda}$ is an $F$-isocrystal over $K_0$ which has a single slope $\lambda \in \Q$.
Assume that $X$ is of finite height.
Then, for the $F$-isocrystal $T=T(X)_{\cris}$ over $K_0$,
we have
\[
T(X)_{\cris} = T_{1/h(X)} \oplus T_0 \oplus T_{-1/h(X)}
\]
and we can identify $T_{-1/h(X)}$ with $\mathrm{TC}(\widehat{\Br}_X)_\Q(1)$; see for instance \cite[Proposition 7]{Taelman}.
Here $\mathrm{TC}(\widehat{\Br}_X)$ is the Cartier-Dieudonn\'e module of typical curves of $\widehat{\Br}_X$.
The height of $X$ is therefore $h(X)$.

\subsection{The endomorphism ring of the transcendental motive}\label{Subsection:The endomorphism ring of the transcendental motive}

We know that the category
$\mathscr{M}_{\num}(\F_q)$
is a semisimple abelian category by \cite{Jannsen92}, and hence the endomorphism ring
$\End(\mathfrak{t}(X))$ of $\mathfrak{t}(X)$
is a finite-dimensional semisimple $\Q$-algebra.
The purpose of this subsection is to establish some basic properties of $\End(\mathfrak{t}(X))$.

First, let us note the following fact:

\begin{prop}\label{Proposition:endomorphism ring realization}
Let
$\End_{\Frob_q}(T(X)_\ell)$
denote the $\Q_\ell$-vector space of endomorphisms of $T(X)_\ell$ that commute with the geometric Frobenius $\Frob_q$.
The $\ell$-adic realization, for every $\ell \neq p$, induces an isomorphism 
\begin{equation}\label{equation:endomorphism ring realization}
\End(\mathfrak{t}(X)) \otimes_\Q \Q_\ell \overset{\sim}{\to} \End_{\Frob_q}(T(X)_\ell).
\end{equation}
\end{prop}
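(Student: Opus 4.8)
The assertion is that the $\ell$-adic realization functor induces an isomorphism between $\End(\mathfrak{t}(X))\otimes_\Q\Q_\ell$ and the commutant $\End_{\Frob_q}(T(X)_\ell)$ of geometric Frobenius acting on $T(X)_\ell$. The plan is to realize both sides as subspaces of the same ambient $\Q_\ell$-vector space and check the two resulting inclusions. The ambient object is $\End(T(X)_\ell)=T(X)_\ell\otimes_{\Q_\ell}T(X)_\ell^\vee$, which, because $T(X)_\ell=\mathfrak{t}(X)_\ell$ is a Tate twist of a direct summand of $H^2(X,\Q_\ell)$ cut out by the idempotent $\pi_{\tr}$, sits inside $H^4(X^2,\Q_\ell)(2)$ as the $\Q_\ell$-subspace $(\pi_{\tr}\otimes\pi_{\tr}^\vee)H^4(X^2,\Q_\ell)(2)$; Poincar\'e duality identifies $T(X)_\ell^\vee$ with $T(X)_\ell$ up to the twist used in defining $\mathfrak{t}(X)$, so an endomorphism of $T(X)_\ell$ is the same datum as a class in this subspace. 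A self-correspondence $u\in A^2(X^2)$ with $u=\pi_{\tr}u\pi_{\tr}$ gives an element of $\End(\mathfrak{t}(X))$, and its $\ell$-adic realization is exactly the corresponding class in that subspace; this is how $\End(\mathfrak{t}(X))$ maps into $\End(T(X)_\ell)$.

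\textbf{First inclusion (well-definedness and injectivity).} First I would observe that the realization map $\End(\mathfrak{t}(X))\to\End(T(X)_\ell)$ lands in the Frobenius commutant: any numerical correspondence is defined over $\F_q$, hence Galois-equivariant, hence commutes with $\Frob_q$. Tensoring with $\Q_\ell$ we get a $\Q_\ell$-linear map $\End(\mathfrak{t}(X))\otimes_\Q\Q_\ell\to\End_{\Frob_q}(T(X)_\ell)$. Injectivity is where Corollary \ref{Corollary:Conjecture D for squares over finite fields} enters: numerical equivalence equals $\ell$-adic homological equivalence on $X^2$, so the cycle class map $A^2(X^2)\hookrightarrow H^4(X^2,\Q_\ell)(2)$ is injective, and this stays injective after tensoring the source with $\Q_\ell$ by the last part of Corollary \ref{Corollary:standard conjecture for squares} (equivalently \cite[Lemma 2.5]{Tate94}). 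Cutting by the idempotent $\pi_{\tr}\otimes\pi_{\tr}^\vee$ preserves injectivity, giving the injectivity of (\ref{equation:endomorphism ring realization}).

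\textbf{Second inclusion (surjectivity).} This is the substantive point and I expect it to be the main obstacle, though here it is essentially a repackaging of the Tate conjecture. Given $f\in\End_{\Frob_q}(T(X)_\ell)$, view it via the identification above as a Frobenius-invariant (i.e. Tate) class in $(\pi_{\tr}\otimes\pi_{\tr}^\vee)H^4(X^2,\Q_\ell)(2)\subset H^4(X^2,\Q_\ell)(2)$. By the Tate conjecture for $X^2$ (Theorem \ref{Theorem:Tate conjecture for square}), every Tate class in $H^4(X^2,\Q_\ell)(2)$ is algebraic; using Corollary \ref{Corollary:Conjecture D for squares over finite fields} again, the space of algebraic classes is exactly $A^2(X^2)$. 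So $f$ comes from some $u\in A^2(X^2)\otimes_\Q\Q_\ell$; replacing $u$ by $(\pi_{\tr}\otimes\pi_{\tr}^\vee)u$ — equivalently conjugating the correspondence by $\pi_{\tr}$, which is itself a numerical cycle class — we may assume $u\in\pi_{\tr}A^2(X^2)\pi_{\tr}\otimes_\Q\Q_\ell=\End(\mathfrak{t}(X))\otimes_\Q\Q_\ell$, and by construction its realization is $f$. Hence (\ref{equation:endomorphism ring realization}) is surjective.

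\textbf{Independence of $\ell$.} Finally, that the isomorphism holds for every $\ell\neq p$ follows because the source $\End(\mathfrak{t}(X))$ is a fixed $\Q$-vector space and the idempotents $\pi_{\tr}$, $\pi_{\alg}$, $\pi_m$ are independent of $\ell$ (as recalled in Subsection \ref{Subsection:The transcendental motive}); no compatibility across different $\ell$ beyond this is needed for the statement. The only genuine inputs are the Tate conjecture for $X^2$, semisimplicity of Frobenius, and the resulting coincidence of numerical and homological equivalence — all already available in the excerpt — so the proof is short modulo carefully setting up the identification of $\End(T(X)_\ell)$ with a summand of $H^4(X^2,\Q_\ell)(2)$.
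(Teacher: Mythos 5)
Your proof is correct and unwinds exactly what the paper's one-line proof intends: the Tate conjecture for $X^2$ gives surjectivity of the cycle class map onto Frobenius-invariants, numerical = homological equivalence (Corollary \ref{Corollary:Conjecture D for squares over finite fields}) makes the realization well-defined and injective, and cutting by the idempotents $\pi_{\tr}$ (which are $\ell$-independent) yields the claim for $\End(\mathfrak{t}(X))$. One correction, though: in the injectivity step you should not cite Corollary \ref{Corollary:standard conjecture for squares}, since that corollary is deduced from Theorem \ref{Theorem:Hodge standard for squares, intro}, whose proof in Section \ref{Section:The Hodge standard conjecture for the square of a K3 surface} already uses the present Proposition (via \eqref{equation:endomorphism ring realization} and Theorem \ref{Theorem:structure of endomorphism ring}), so that reference is circular. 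The fix is what you already parenthetically offer: invoke \cite[Lemma 2.5]{Tate94} directly, or more elementarily, note that the numerical intersection pairing on $A^2(X^2)$ is nondegenerate by definition, hence remains nondegenerate after $\otimes_\Q \Q_\ell$, and the cycle class map is compatible with the cup product pairing on $H^4(X^2,\Q_\ell)(2)^{\Frob_q}$ (nondegenerate by Poincar\'e duality and Frobenius semisimplicity), which forces the kernel of $A^2(X^2)\otimes_\Q\Q_\ell \to H^4(X^2,\Q_\ell)(2)$ to vanish.
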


\begin{proof}
This follows from Theorem \ref{Theorem:Tate conjecture for square} and Corollary \ref{Corollary:Conjecture D for squares over finite fields}.
\end{proof}

Let $\Fr_X \in \End(\mathfrak{t}(X))$ denote the endomorphism induced by the Frobenius endomorphism
$X \to X$ over $\F_q$.

\begin{thm}\label{Theorem:structure of endomorphism ring}
Let $X$ be a K3 surface of finite height over $\F_q$.
Then the transcendental motive $\mathfrak{t}(X)$ is a simple object in $\mathscr{M}_{\num}(\F_q)$.
Moreover the $\Q$-subalgebra $\Q[\Fr_X] \subset \End(\mathfrak{t}(X))$ generated by $\Fr_X$ is a field and the endomorphism ring $\End(\mathfrak{t}(X))$ is a central division algebra over $\Q[\Fr_X]$.
\end{thm}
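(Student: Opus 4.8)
The plan is to combine the $\ell$-adic realization isomorphism of Proposition \ref{Proposition:endomorphism ring realization} with the structure of the Frobenius action described in Proposition \ref{Proposition:L-function of K3}. First I would show that $\Q[\Fr_X]$ is a field. By Proposition \ref{Proposition:endomorphism ring realization}, the characteristic polynomial of $\Fr_X$ acting on $\mathfrak{t}(X)$ equals $L_{\mathrm{trc}}(X,T)$ up to the usual sign/reciprocal normalization, so $\Q[\Fr_X] \cong \Q[T]/(m_{\Fr_X}(T))$ where $m_{\Fr_X}$ is the minimal polynomial. Since $L_{\mathrm{trc}}(X,T) = Q(T)^e$ with $Q$ irreducible (Proposition \ref{Proposition:L-function of K3}(2)), the minimal polynomial is $Q(T)$ itself, which is irreducible, hence $\Q[\Fr_X] =: E$ is a number field of degree $2d(X)$.

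Next I would prove simplicity of $\mathfrak{t}(X)$. Since $\mathscr{M}_{\num}(\F_q)$ is semisimple (Jannsen), it suffices to show $\End(\mathfrak{t}(X))$ has no nontrivial idempotents, equivalently that $R := \End(\mathfrak{t}(X))$ is a division algebra. The key point is to control $R$ via its center. Since $\Fr_X$ is central in $R$ (the Frobenius correspondence commutes with all correspondences over $\F_q$), we have $E = \Q[\Fr_X] \subseteq Z(R)$. After base change to $\Q_\ell$, the isomorphism \eqref{equation:endomorphism ring realization} identifies $R \otimes_\Q \Q_\ell$ with the commutant $\End_{\Frob_q}(T(X)_\ell)$ of $\Frob_q$ acting on the $2d(X)\cdot e(X)$-dimensional space $T(X)_\ell$. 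By semisimplicity of $\Frob_q$, this space is a direct sum of simple $\Q_\ell[\Frob_q]$-modules. The crucial input is to pin down how $Q(T)$ factors over $\Q_\ell$: I would argue that for a suitable choice of $\ell$ (or using Proposition \ref{Proposition:L-function of K3}(4) at $p$ together with a slope/Newton-polygon consistency argument), $Q(T)$ stays irreducible over $\Q_\ell$, so $T(X)_\ell$ is isotypic of the form $V^{\oplus e(X)}$ with $V$ simple and $\End_{\Q_\ell[\Frob_q]}(V) = E \otimes_\Q \Q_\ell$ (a field, since $Q$ is irreducible there). Then $\End_{\Frob_q}(T(X)_\ell) \cong M_{e(X)}(E \otimes_\Q \Q_\ell)$, a central simple algebra over $E \otimes_\Q \Q_\ell$ of dimension $e(X)^2$. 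Since this holds after every such base change and $R$ is semisimple over $\Q$, it follows that $Z(R) = E$ exactly and $R$ is a central simple $E$-algebra of degree $e(X)$. Finally, to upgrade "central simple" to "division": a central simple algebra over a field is a division algebra iff it has no zero divisors; here one uses that $\mathfrak{t}(X)$ cannot decompose because any nontrivial idempotent in $R$ would give a proper sub-motive whose $\ell$-adic realization is a $\Frob_q$-stable subspace of $T(X)_\ell$ of dimension a proper multiple of $2d(X)$, contradicting... — actually this requires more care, and I would instead invoke the description of $\mathfrak{t}(X)$ as the motive attached to the Kuga–Satake abelian variety together with its known endomorphism structure, or use the crystalline realization: the slope filtration of $T(X)_{\cris}$ exhibits $T_{-1/h(X)}$ as an irreducible $F$-isocrystal (Proposition \ref{Proposition:L-function of K3}(4)), and compatibility of $R \otimes \Q_p$ with this forces $R$ to act irreducibly, hence $R$ is a division algebra.

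The main obstacle I anticipate is the passage from "$R$ is central simple over $E$" to "$R$ is a \emph{division} algebra": semisimplicity alone only gives a product of matrix algebras over division algebras, and ruling out matrix factors amounts exactly to simplicity of $\mathfrak{t}(X)$, so the two halves of the theorem are genuinely intertwined and cannot be separated. The cleanest route is probably to establish irreducibility of the $F$-isocrystal $T_{-1/h(X)}$ over $\Q_p$ (which is essentially Proposition \ref{Proposition:L-function of K3}(4)), deduce that $\Q_p[\Frob]$ acts on it through a field, and then use that $R \otimes_\Q \Q_p$ must act faithfully and $\Frob$-linearly on $T(X)_{\cris}$ — combined with the fact that $R$ is already known to be central simple over $E$ from the $\ell$-adic side — to conclude the local invariant of $R$ at the place(s) of $E$ above $p$ forces $R$ to be a division algebra globally; one checks it has no zero divisors by testing on the supersingular-slope part. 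This crystalline ingredient is the genuinely nontrivial step; everything else is formal manipulation of semisimple categories and characteristic polynomials.
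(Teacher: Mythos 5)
Your proposal lands on the right key input—the irreducibility of the slope $-1/h$ $F$-isocrystal from Proposition \ref{Proposition:L-function of K3}(4)—but the packaging around it has gaps and is more circuitous than necessary.

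First, a small point: your claim that ``for a suitable choice of $\ell$ \dots $Q(T)$ stays irreducible over $\Q_\ell$'' is not justified (and is false in general: an irreducible polynomial over $\Q$ may well factor over every $\Q_\ell$ with $\ell\neq p$). Fortunately it is also unnecessary: since $\Frob_q$ acts semisimply with characteristic polynomial $Q^e$ (up to normalization), $T(X)_\ell$ is a free module of rank $e$ over $\Q_\ell[T]/(Q) \cong \Q[\Fr_X]\otimes_\Q\Q_\ell$, and hence $\End_{\Frob_q}(T(X)_\ell)\cong M_e\bigl(\Q[\Fr_X]\otimes_\Q\Q_\ell\bigr)$ without any irreducibility hypothesis. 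With the bicommutant theorem this already identifies the center of $\End(\mathfrak{t}(X))$ with $\Q[\Fr_X]$, which is the paper's first step.

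The more serious issue is the passage from ``central simple over $\Q[\Fr_X]$'' to ``division algebra,'' which you correctly flag as the crux. Your proposal to compute local Brauer invariants at $p$ is not worked out and is not obviously sufficient: a nonzero local invariant at one place only bounds the index from below, and you would still need to show that the local index at $p$ equals the full degree $e$. More importantly, you are missing the Katz--Messing input that the paper uses to make the slope argument bite. The paper argues directly: if $\mathfrak{t}(X)=V\oplus W$ nontrivially, then by Katz--Messing the characteristic polynomials of Frobenius on the crystalline realizations $V_\cris$, $W_\cris$ lie in $\Q[T]$, hence each is a positive power $Q^m$ of the irreducible $Q$; in particular each of $V_\cris$, $W_\cris$ has a \emph{nonzero} slope $-1/h$ part, and their direct sum then splits the simple $F$-isocrystal $\mathrm{TC}(\widehat{\Br}_X)_\Q(1)=T(X)_{\cris,-1/h}$, a contradiction. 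Without the rationality of the characteristic polynomials of the summands, one cannot conclude that both summands meet the negative-slope part, and ``testing on the supersingular-slope part'' has no teeth. The paper's route bypasses the Brauer-group bookkeeping entirely and is the cleaner way to get both simplicity and the division-algebra statement at once; you should supply the Katz--Messing step and replace the local-invariant sketch with the direct decomposition argument.
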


\begin{proof}
The isomorphism $(\ref{equation:endomorphism ring realization})$, together with the semisimplicity of Frobenius and the bicommutant theorem, implies that $\Q[\Fr_X]$ is the center of $\End(\mathfrak{t}(X))$.

It remains to prove that $\mathfrak{t}(X)$ is a simple object.
Assume that $\mathfrak{t}(X)$ is not a simple object.
Then, since $\mathscr{M}_{\num}(\F_q)$ is semisimple, we have $\mathfrak{t}(X)=V \oplus W$ for some nonzero objects $V$ and $W$ in $\mathscr{M}_{\num}(\F_q)$. 
By Corollary \ref{Corollary:Conjecture D for squares over finite fields}, the crystalline realizations $V_\cris$, $W_\cris$ of $V$, $W$ are well-defined and we obtain the decomposition
$T(X)_{\cris}=V_{\cris} \oplus W_{\cris}$. 
The characteristic polynomials of $\Frob_q$ acting on $V_\cris$ and $W_\cris$ both lie in $\Q[T]$; see \cite[Theorem 2]{Katz-Messing}.
Thus they are of the form $Q(T)^m$ and $Q(T)^n$ for the irreducible polynomial $Q(T) \in \Q[T]$ appearing in Proposition \ref{Proposition:L-function of K3}.
Here $m, n>0$ are positive integers.
It follows that
the negative slope parts
$V_{\cris, -1/h}, W_{\cris, -1/h}$ of $V_\cris, W_\cris$
are nonzero, where $h$ is the height of $X$.
Therefore, we obtain a nontrivial decomposition
\[
\mathrm{TC}(\widehat{\Br}_X)_\Q(1)=T(X)_{\cris, -1/h}=V_{\cris, -1/h} \oplus W_{\cris, -1/h}
\]
and this gives a contradiction since
$\mathrm{TC}(\widehat{\Br}_X)_\Q$ is a simple object in the category of $F$-isocrystals over $K_0$.
\end{proof}

\begin{cor}\label{Corollary:rank of motivic endomorphism ring}
For $X$ of finite height,
$[\Q[\Fr_X]: \Q]=2d(X)$ and
$\dim_{\Q[\Fr_X]} \End(\mathfrak{t}(X))=e(X)^2$ hold.
(See Definition \ref{Definition:d, e, h} for $d(X)$ and $e(X)$.)
\end{cor}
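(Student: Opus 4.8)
The plan is to deduce both equalities from the $\ell$-adic realization isomorphism of Proposition~\ref{Proposition:endomorphism ring realization} together with the description of $L_{\mathrm{trc}}(X,T)$ in Proposition~\ref{Proposition:L-function of K3}. Throughout, set $E := \Q[\Fr_X]$, which by Theorem~\ref{Theorem:structure of endomorphism ring} is a number field and the center of the division algebra $\End(\mathfrak{t}(X))$; recall also that $\Frob_q$ acts semisimply on $T(X)_\ell$, a Frobenius-stable direct summand of $H^2(X,\Q_\ell)(1)$.

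First I would treat $[E:\Q] = 2d(X)$. Via Proposition~\ref{Proposition:endomorphism ring realization} the realization map embeds $\End(\mathfrak{t}(X))$ into $\End_{\Frob_q}(T(X)_\ell)$ (it becomes an isomorphism after $-\otimes_\Q\Q_\ell$, and $V \to V\otimes_\Q\Q_\ell$ is injective) and carries $\Fr_X$ to $\Frob_q$; hence the minimal polynomial of $\Fr_X$ over $\Q$ equals that of $\Frob_q$ on $T(X)_\ell$. In the normalization $\det(1-\Frob_q T)$, the characteristic polynomial of $\Frob_q$ on $T(X)_\ell$ is $L_{\mathrm{trc}}(X,T) = Q(T)^{e(X)}$ with $Q$ irreducible over $\Q$ of degree $2d(X)$ (Proposition~\ref{Proposition:L-function of K3}(2)); as $\Frob_q$ is semisimple and invertible, its minimal polynomial has degree equal to $\deg Q = 2d(X)$. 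Therefore $[E:\Q] = 2d(X)$.

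Next, for $\dim_E \End(\mathfrak{t}(X)) = e(X)^2$, I would argue place by place over $E$. Since $E$ is a number field, $E\otimes_\Q\Q_\ell \cong \prod_{v\mid\ell} E_v$ is a finite product of local fields, and $T(X)_\ell = \bigoplus_{v\mid\ell} M_v$ correspondingly, with each $M_v$ an $E_v$-vector space of some dimension $r_v$. Then
\[
\End_{\Frob_q}(T(X)_\ell) = \End_{E\otimes_\Q\Q_\ell}(T(X)_\ell) = \prod_{v\mid\ell}\End_{E_v}(M_v) \cong \prod_{v\mid\ell} M_{r_v}(E_v).
\]
On the other hand, writing $m^2 := \dim_E \End(\mathfrak{t}(X))$ — a square, since $\End(\mathfrak{t}(X))$ is central simple over $E$ — one has $\End(\mathfrak{t}(X))\otimes_\Q\Q_\ell \cong \prod_{v\mid\ell}\big(\End(\mathfrak{t}(X))\otimes_E E_v\big)$, each factor a central simple $E_v$-algebra of dimension $m^2$. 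The isomorphism of Proposition~\ref{Proposition:endomorphism ring realization} is $E\otimes_\Q\Q_\ell$-linear, hence respects these decompositions, so $\End(\mathfrak{t}(X))\otimes_E E_v \cong M_{r_v}(E_v)$ and therefore $r_v = m$ for every $v\mid\ell$. Counting $\Q_\ell$-dimensions,
\[
2\,d(X)\,e(X) = \dim_{\Q_\ell} T(X)_\ell = \sum_{v\mid\ell} r_v\,[E_v:\Q_\ell] = m\sum_{v\mid\ell}[E_v:\Q_\ell] = m\,[E:\Q] = 2\,d(X)\,m,
\]
so $m = e(X)$, which is the assertion.

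I do not anticipate a serious obstacle: the corollary is essentially a bookkeeping consequence of Theorem~\ref{Theorem:structure of endomorphism ring} and Proposition~\ref{Proposition:endomorphism ring realization}. The only point needing care is the compatibility of the place-by-place decompositions of $E\otimes_\Q\Q_\ell$ on the two sides of Proposition~\ref{Proposition:endomorphism ring realization} — i.e.\ that the $v$-component of $\End(\mathfrak{t}(X))\otimes_\Q\Q_\ell$ is exactly the commutant of $E_v$ acting on $M_v$ — which follows from the realization isomorphism being compatible with the action of the center $E$ on both sides.
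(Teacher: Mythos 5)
Your argument is correct and is exactly the spelled-out version of what the paper leaves implicit: the realization isomorphism of Proposition~\ref{Proposition:endomorphism ring realization} together with Theorem~\ref{Theorem:structure of endomorphism ring} and the shape $L_{\mathrm{trc}} = Q^e$ from Proposition~\ref{Proposition:L-function of K3}. The paper simply cites these two inputs, whereas you have carried out the dimension count place-by-place over $E\otimes_\Q\Q_\ell$ and verified that the minimal polynomial of $\Fr_X$ matches that of $\Frob_q$; this is the same route, just with the bookkeeping made explicit.
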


\begin{proof}
This follows from Theorem \ref{Theorem:structure of endomorphism ring} and the isomorphism $(\ref{equation:endomorphism ring realization})$.
\end{proof}

\subsection{Remarks on transcendental motives in characteristic $0$}\label{Subsection:Remarks on the transcendental motives in characteristic 0}

Let $K$ be a field of characteristic $0$ and $\mathscr{M}_{\hom}(K)$ the category of homological motives over $K$ (with coefficients in $\Q$) with respect to the $\ell$-adic cohomology theory for some prime number $\ell$.
Since $K$ is of characteristic $0$,
it is known that this category is independent of $\ell$, or in other words, homological equivalence does not depend on the choice of $\ell$. Indeed, we may assume that $K=\C$ by the Lefschetz principle, and then Artin's comparison theorem between Betti and $\ell$-adic cohomology for varieties over $\C$ implies the independence of $\ell$.

By repeating the same construction as in Section \ref{Subsection:The transcendental motive}, we define the homological transcendental motive $\mathfrak{t}(Y)$ in $\mathscr{M}_{\hom}(K)$ for a K3 surface $Y$ over $K$.

\begin{lem}\label{Lemma:algebraically closed fields extension}
For an extension of algebraically closed fields $L/K$ of characteristic $0$ and a K3 surface $Y$ over $K$, we have
    \[
    \End_{\mathscr{M}_{\hom}(K)}(\mathfrak{t}(Y)) \overset{\sim}{\to} \End_{\mathscr{M}_{\hom}(L)}(\mathfrak{t}(Y_{L})).
    \]
\end{lem}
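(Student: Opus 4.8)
The plan is to reduce the comparison of endomorphism rings to a statement about algebraic cycles, and then invoke the standard specialization argument for cycles on a K3 surface together with the fact that the transcendental motive is cut out by the same idempotent over $K$ and over $L$. First I would observe that the pullback along $\Spec L \to \Spec K$ gives a functor $\mathscr{M}_{\hom}(K) \to \mathscr{M}_{\hom}(L)$, $\mathfrak{t}(Y) \mapsto \mathfrak{t}(Y_L)$; indeed, the N\'eron--Severi group does not change under extension of algebraically closed fields of characteristic $0$ (since $\Pic$ is of finite type and $\overline{K} = K$), so the projector $\pi_{\alg}$ and hence $\pi_{\tr}$ are defined over $K$ and are compatible with base change, giving a well-defined natural map on $\End$. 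This map is automatically injective because the $\ell$-adic realization is faithful on homological motives and $H^2_{\et}((Y)_{\overline{K}}, \Q_\ell) = H^2_{\et}((Y_L)_{\overline{L}}, \Q_\ell)$ is an isomorphism of Galois-equivariant (in fact, trivially acting, since $K$ is algebraically closed) vector spaces.

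For surjectivity, the key step is that an endomorphism of $\mathfrak{t}(Y_L)$ is represented by an algebraic cycle class in $A^2((Y_L)^2)$ landing in the appropriate Hom-space, and I want to descend this cycle to $Y^2$ over $K$. Here I would use the standard fact that for any algebraically closed extension $L/K$ of characteristic $0$, the cycle class map on $Y_L \times Y_L$ has image spanned by classes already defined over $K$: write $L$ as a filtered colimit of finitely generated $K$-subalgebras $R_\alpha$, spread out a given cycle $Z$ on $(Y_L)^2$ to a cycle $\mathcal{Z}$ on $Y^2 \times_K \Spec R_\alpha$, and specialize along any $K$-point of $\Spec R_\alpha$ (which exists since $K$ is algebraically closed and $\Spec R_\alpha$ is nonempty of finite type over $K$). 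The specialized cycle $Z_0$ on $(Y^2)$ has the same $\ell$-adic class as $Z$ under the identification of cohomology, because $\ell$-adic cohomology of the (smooth proper) family is locally constant and the cycle class is compatible with specialization; hence $Z$ and $Z_0$ induce the same homological correspondence. Composing with $\pi_{\tr}$ on both sides, the class of $Z_0$ defines an element of $\End_{\mathscr{M}_{\hom}(K)}(\mathfrak{t}(Y))$ mapping to the given element, which proves surjectivity.

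The main obstacle, such as it is, lies in the specialization step: one must ensure that the $\ell$-adic cycle class is genuinely invariant under specialization and that the identification $H^\bullet_{\et}((Y_L)^2, \Q_\ell) \cong H^\bullet_{\et}((Y^2)_{\overline{K}}, \Q_\ell)$ is the one realizing the equality of correspondences (both facts are standard smooth-proper base change and compatibility-of-cycle-class statements, e.g.\ in SGA~$4\tfrac12$, but they should be cited carefully). A cleaner route, which I would actually prefer, avoids spreading out: since $\mathscr{M}_{\hom}(K)$ and $\mathscr{M}_{\hom}(L)$ are both semisimple abelian categories (numerical and homological equivalence agree for abelian-type motives, or simply work with the Chow--K\"unneth pieces of a surface where this is elementary), it suffices to match simple summands and their endomorphism algebras; and the decomposition of $\mathfrak{t}$ into simples, together with the dimensions of the Hom-spaces between them, is detected by the $\ell$-adic realization, which is literally unchanged by the base change $K \to L$. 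Either way, the heart of the matter is the elementary principle that \emph{algebraic cycles on a variety over an algebraically closed field of characteristic $0$ do not grow under extension of the base field}, applied to $Y^2$.
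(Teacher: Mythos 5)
Your main argument (spreading out $L$ as a filtered colimit of finitely generated $K$-algebras, specializing along a $K$-point, and using compatibility of the cycle class map with specialization via smooth proper base change) is correct and is precisely the ``usual spreading out and specialization arguments'' the paper invokes; your observation that the projectors $\pi_{\alg}$, $\pi_{\tr}$ are defined over $K$ and compatible with base change, and that injectivity follows from faithfulness of the $\ell$-adic realization, also agrees with the paper.

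However, the ``cleaner route'' you say you would actually prefer has a genuine gap. Knowing that the $\ell$-adic realization $T(Y)_\ell = T(Y_L)_\ell$ is literally unchanged under $K \to L$ does \emph{not} let you conclude that the Hom-spaces in $\mathscr{M}_{\hom}(K)$ and $\mathscr{M}_{\hom}(L)$ agree: the realization functor is faithful but not full, and the assertion that a given $\Frob$- or Hodge-compatible cohomology class is represented by an algebraic cycle over $L$, respectively over $K$, is exactly what the Tate/Hodge conjecture governs. So ``the decomposition into simples, together with the dimensions of the Hom-spaces between them, is detected by the $\ell$-adic realization'' is false in general; the dimension of $\End_{\mathscr{M}_{\hom}(-)}(\mathfrak{t}(Y))$ is only bounded above by the dimension of the $\ell$-adic centralizer, and the content of the lemma is precisely that the cycle-theoretic dimension does not grow under base change. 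There is no way to avoid the spreading-out step; the alternate route quietly re-imports it (or a form of the Hodge conjecture) in the claim about detecting Hom-spaces. Stick with your first argument, which is the paper's.
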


\begin{proof}
The injectivity is clear.
The surjectivity follows from usual spreading out and specialization arguments.
In fact, the result holds for any homological motive in $\mathscr{M}_{\hom}(K)$ and its base change to $L$ (with the same proof).
\end{proof}

Let $Y$ be a K3 surface over a field $K$ of characteristic $0$.
The numerical motive attached to the homological transcendental motive will be denoted by the same notation, so we write $\mathfrak{t}(Y) \in  \mathscr{M}_{\num}(K)$.

\begin{prop}\label{Proposition:transcendental motive is simple in characteristic 0}
\begin{enumerate}
    \item
    $\End_{\mathscr{M}_{\hom}(K)}(\mathfrak{t}(Y)) \overset{\sim}{\to} \End_{\mathscr{M}_{\num}(K)}(\mathfrak{t}(Y)).
    $
    \item $\mathfrak{t}(Y)$ is a simple object in $\mathscr{M}_{\num}(K)$ and $\End_{\mathscr{M}_{\num}(K)}(\mathfrak{t}(Y))$ is a number field, in particular, it is commutative.
\end{enumerate}
\end{prop}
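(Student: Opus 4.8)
The plan is to prove (1) first, and then deduce (2) from it together with the Hodge-theoretic structure of the transcendental lattice of a complex K3 surface. For (1), the point is that $\mathfrak{t}(Y)$ is cut out of $h^2(Y)$ by an algebraic projector $\pi_{\tr}$, and its $\ell$-adic realization $T(Y)_\ell$ is a Hodge substructure (after reducing to $K=\C$ by the Lefschetz principle and Artin comparison) with $H^{2,0}\neq 0$. Any nonzero homological endomorphism that is numerically trivial would have to vanish on the realization; so it suffices to show the realization functor $\End_{\mathscr{M}_{\hom}(K)}(\mathfrak{t}(Y)) \to \End_{\Q_\ell}(T(Y)_\ell)$ is injective, which is immediate since homological equivalence is by definition detected by the realization. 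Hence $\End_{\mathscr{M}_{\hom}(K)}(\mathfrak{t}(Y)) \hookrightarrow \End_{\mathscr{M}_{\num}(K)}(\mathfrak{t}(Y))$ is both surjective (numerical is coarser than homological, so the natural map goes the other way) — more carefully, there is always a surjection $\End_{\hom}\twoheadrightarrow\End_{\num}$, and I claim it is also injective: an endomorphism homologically nonzero has nonzero $\ell$-adic realization, and I must argue this realization is not numerically trivial. Here I would invoke that, over $\C$, the realization of $\mathfrak{t}(Y)$ carries a polarized Hodge structure for which the cup-product pairing is nondegenerate on the motive, so the pairing $\End\times\End\to\Q$ via composition and trace is nondegenerate; this forces $\Ker(\End_{\hom}\to\End_{\num})=0$.

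For (2), I would first establish it over $\C$ and then descend. Over $\C$, the transcendental lattice $T(Y)$ of a K3 surface carries a weight-two Hodge structure with $h^{2,0}=1$ that is \emph{irreducible} as a rational Hodge structure — this is a classical fact (Zarhin, "Hodge groups of K3 surfaces"): any proper nonzero sub-Hodge-structure would be contained in the kernel of the nonzero period map or project nontrivially onto $H^{2,0}$, and a dimension count together with the fact that the only sub-Hodge-structures of $T(Y)$ are $0$ and $T(Y)$ (as $T(Y)$ has no algebraic classes and $H^{2,0}$ is one-dimensional) gives irreducibility. Since $\mathfrak{t}(Y)$ realizes to this irreducible Hodge structure, and since over $\C$ homological motives of K3 surfaces admit the standard decomposition $h(Y)=h^0\oplus h^2\oplus h^4$ with $h^2 = \mathrm{alg}\oplus \mathfrak{t}$, the motive $\mathfrak{t}(Y)$ has no proper nonzero \emph{homological} submotive (a submotive would give a sub-Hodge-structure of $T(Y)$). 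By part (1) this passes to numerical motives, so $\mathfrak{t}(Y)$ is simple in $\mathscr{M}_{\num}(\C)$, and $\End_{\num}(\mathfrak{t}(Y))=\End_{\hom}(\mathfrak{t}(Y))$ is a division algebra over $\Q$. That it is in fact a \emph{field}: Zarhin's theorem again shows $\End$ of the Hodge structure $T(Y)$ is a field (either totally real or a CM field), because the polarization form on $T(Y)$ induces a positive involution on this division algebra, and a division algebra with positive involution that acts on a Hodge structure with $h^{2,0}=1$ must be commutative — the one-dimensionality of $H^{2,0}$ forces the commutant to act by scalars on it, pinning down the center. To descend from $\C$ to a general characteristic-zero field $K$: embed $K$ into $\C$ (after first passing to $\overline{K}$, using Lemma \ref{Lemma:algebraically closed fields extension} to see $\End_{\mathscr{M}_{\hom}(\overline K)}(\mathfrak{t}(Y_{\overline K}))\xrightarrow{\sim}\End_{\mathscr{M}_{\hom}(\C)}(\mathfrak{t}(Y_\C))$), so $\mathfrak{t}(Y_{\overline K})$ is simple with commutative endomorphism field; then $\mathfrak{t}(Y_K)$, whose endomorphism ring is a sub-$\Q$-algebra of that of $\mathfrak{t}(Y_{\overline K})$ (a subalgebra of a field, hence a field) and which has no proper nonzero submotive because base change to $\overline K$ is faithful on submotives, is simple with number-field endomorphisms.

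The main obstacle I anticipate is proving the \emph{irreducibility of the transcendental Hodge structure} cleanly enough to cite, and more precisely extracting \emph{commutativity} of $\End$ from it. The simplicity itself is almost formal once one knows submotives correspond to sub-Hodge-structures and $T(Y)$ has only the trivial ones; but ruling out a genuine noncommutative division algebra (e.g. a quaternion algebra) as $\End(T(Y))$ requires the input that $h^{2,0}=1$ together with the positivity of the polarization-induced involution — this is exactly Zarhin's argument, and I would want to cite it (e.g. Zarhin's "Hodge groups of K3 surfaces", or Huybrechts's K3 book, Chapter on the transcendental lattice) rather than reprove it. A secondary subtlety is making sure the descent argument from $\C$ to $K$ correctly handles the possibility that passing to $\overline K$ enlarges $\End$; Lemma \ref{Lemma:algebraically closed fields extension} handles the $\overline K$-to-$\C$ step, and faithful flatness of base change on the semisimple category of homological motives handles $K$-to-$\overline K$, so the endomorphism algebra over $K$ is a $\Q$-subalgebra of a number field, hence itself a number field.
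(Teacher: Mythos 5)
Your overall route matches the paper's: reduce to $K=\C$ via Lemma \ref{Lemma:algebraically closed fields extension} and the Lefschetz principle, cite Zarhin's theorem \cite[Theorem 1.6]{Zarhin83} for irreducibility of the transcendental Hodge structure and for the endomorphism ring being a number field, and deduce (1) from the coincidence of homological and numerical equivalence on $Y\times Y$ in characteristic $0$. The descent from $\overline K$ back to $K$ that you spell out at the end (submotives are detected after base change, and $\End_K$ is a $\Q$-subalgebra of the number field $\End_{\overline K}$) is exactly the reason the paper can open its proof of (2) with ``we may assume $K$ is algebraically closed,'' so that part is fine and a useful unpacking.

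The one point that does not hold as written is the justification of (1). You argue: ``the cup-product pairing is nondegenerate on the motive, so the pairing $\End\times\End\to\Q$ via composition and trace is nondegenerate.'' That implication is false in general: a nondegenerate bilinear form on a vector space $V$ does not make the trace pairing nondegenerate when restricted to an arbitrary subalgebra of $\End(V)$ (e.g.\ upper-triangular matrices with the standard hyperbolic form already break it). What you actually need is the \emph{positivity} coming from the Hodge--Riemann bilinear relations: the form $(f,g)\mapsto\Tr(f\,\iota(g))$, with $\iota$ the polarization-adjoint (which is what $\transp{(-)}$ realizes to, cf.\ Section \ref{Subsection:Motivic isometries}), is \emph{positive definite} on $\End_{\mathrm{Hdg}}(T(Y)_\Q)$, hence nondegenerate on the $\Q$-subalgebra $\End_{\mathscr{M}_{\hom}(\C)}(\mathfrak{t}(Y))$, and that nondegenerate form is exactly the intersection pairing computing numerical equivalence (by the trace formula \cite[Proposition 1.3.6]{Kleiman68}). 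Equivalently, and more efficiently, you can simply note---as the paper does---that Corollary \ref{Corollary:standard conjecture for squares} is already known in characteristic $0$ (Hodge standard conjecture from Hodge--Riemann, Lefschetz standard conjecture for products of surfaces), so numerical equals homological equivalence on $Y\times Y$ and (1) drops out. You do write ``polarized Hodge structure,'' which is the right keyword, but the explicit chain of reasoning you give uses only nondegeneracy of the cup product and is therefore not a valid deduction; the fix is to invoke Hodge--Riemann positivity (or the known standard conjectures in characteristic $0$) rather than mere nondegeneracy.
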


\begin{proof}
(1) Let us remark that numerical equivalence coincides with homological equivalence for algebraic cycles on $Y \times Y$ as Corollary \ref{Corollary:standard conjecture for squares} is already known in characteristic 0 (with the same argument).
The assertion follows from this fact.

(2) We may assume that $K$ is algebraically closed.
Thanks to Lemma \ref{Lemma:algebraically closed fields extension},
we can further reduce to the case where $K=\C$.
In this case, it is known that the orthogonal complement $T(Y)_\Q$
of $\Pic(Y)_\Q \subset H^2(Y, \Q(1))$
is an irreducible $\Q$-Hodge structure and, moreover, the ring
$\End_{\mathrm{Hdg}}(T(Y)_\Q)$
of endomorphisms of $T(Y)_\Q$ that preserve the $\Q$-Hodge structure on it is a number field; see \cite[Theorem 1.6]{Zarhin83}.
This fact implies (2).
\end{proof}

\begin{rem}\label{Remark:compare with characteristic p case}
    In contrast, for a K3 surface $X$ over $\F_q$ of finite height,
    the ring $\End(\mathfrak{t}(X))$ is not commutative in general.
    In fact, it is commutative if and only if $e(X)=1$.
\end{rem}

\section{CM liftings of K3 surfaces revisited}\label{Section:CM liftings of K3 surfaces revisited}

In this section, we discuss some consequences of \cite{Ito-Ito-Koshikawa}.
One of the main results is Corollary \ref{Corollary:positive involution}, which states that the involution of $\End(\mathfrak{t}(X))$ obtained by switching the two factors of $X \times X$ is positive.
This is a key point in our proof of the Hodge standard conjecture for the square of a K3 surface (Theorem \ref{Theorem:Hodge standard for squares, intro}).
We also rewrite our result on CM liftings of K3 surfaces given in \cite{Ito-Ito-Koshikawa} without involving the Kuga--Satake construction; see Theorem \ref{Theorem:CM lifting revisited}.
Its formulation is conceptually much simpler than the previous one.

\subsection{Motivic isometries}\label{Subsection:Motivic isometries}

Let $X$ be a K3 surface over $\F_q$.
The transpose
$A^2(X^2) \to A^2(X^2)$, $\alpha \mapsto {^t}\alpha$
obtained by interchanging the two factors of $X^2$
induces an involution
\[
\iota \colon \End(\mathfrak{t}(X)) \to \End(\mathfrak{t}(X)), \quad f \mapsto {^t}f.
\]
Under the isomorphism $(\ref{equation:endomorphism ring realization})$,
the involution $\iota_{\Q_\ell}$ coincides with the one of $\End_{\Frob_q}(T(X)_\ell)$ defined as taking the adjoint with respect to the cup product pairing on $T(X)_\ell$.
Let
$I(\mathfrak{t}(X))$
denote the algebraic group over $\Q$ whose $R$-valued points are
\[
I(\mathfrak{t}(X))(R) = \{ \, f \in (\End(\mathfrak{t}(X))\otimes_\Q R)^\times \, \vert \, f\iota(f)=1 \, \}
\]
for every commutative ring $R$ over $\Q$.

Let $\mathscr{L}$ be an ample line bundle on $X$.
We also consider the primitive part
\[
\p(X):=\mathfrak{h}^{2, \prim}(X)(1):=(X, \pi_2-\pi_\mathscr{L}, 1) \in \mathscr{M}_{\num}(\F_q)
\]
of the motive $\mathfrak{h}^2(X)(1)$, where
\[
\pi_\mathscr{L}:=\langle \mathscr{L}, \mathscr{L} \rangle^{-1} p^*_1 \mathscr{L} \cdot p^*_2 \mathscr{L} \in A^2(X^2).
\]
The $\ell$-adic realization of $\p(X)$ is the orthogonal complement
$P^2(X, \Q_\ell)(1)$ of
(the first Chern class of) $\mathscr{L}$ in $H^2(X, \Q_\ell)(1)$.
As in the case of $\End(\mathfrak{t}(X))$,
we can define an involution
\[
\iota \colon \End(\p(X)) \to \End(\p(X)), \quad f \mapsto {{^t}f},
\]
again denoted by $\iota$,
and then let
$\mathrm{O}(\p(X))$ denote
the algebraic group over $\Q$ defined by
\[
\mathrm{O}(\p(X))(R) = \{ \, f \in (\End(\p(X))\otimes_\Q R)^\times \, \vert \, f\iota(f)=1 \, \}
\]
for every commutative ring $R$ over $\Q$.

\begin{lem}\label{Lemma:orthogonal group realization}
\begin{enumerate}
    \item The $\ell$-adic realization induces an isomorphism of algebraic groups over $\Q_\ell$
\begin{equation}\label{equation:transcendental orthogonal group realizarion}
    I(\mathfrak{t}(X))_{\Q_\ell} \overset{\sim}{\to} \SO_{\Frob_q}(T(X)_\ell),
\end{equation}
where
$\SO_{\Frob_q}(T(X)_\ell)$  
is the subgroup of the special orthogonal group
$\SO(T(X)_\ell)$
consisting of the elements which commute with $\Frob_q$.
Moreover $I(\mathfrak{t}(X))$ is connected.
\item The $\ell$-adic realization induces isomorphism of algebraic groups over $\Q_\ell$
\begin{equation}\label{equation:primitive orthogonal group realizarion}
\mathrm{O}(\p(X))_{\Q_\ell} \overset{\sim}{\to} \mathrm{O}_{\Frob_q}(P^2(X, \Q_\ell)(1)),
\end{equation}
where $\mathrm{O}_{\Frob_q}(P^2(X, \Q_\ell)(1))$ is the centralizer of $\Frob_q$ in the orthogonal group $\mathrm{O}(P^2(X, \Q_\ell)(1))$.
\end{enumerate}
\end{lem}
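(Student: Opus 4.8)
The plan is to deduce both isomorphisms directly from the compatibility of the $\ell$-adic realization with the relevant algebraic structures, using the facts already established. The underlying input is Proposition \ref{Proposition:endomorphism ring realization}, which identifies $\End(\mathfrak{t}(X))\otimes_\Q\Q_\ell$ with $\End_{\Frob_q}(T(X)_\ell)$, together with the analogous statement for $\p(X)$ (proved in exactly the same way, using Theorem \ref{Theorem:Tate conjecture for square} and Corollary \ref{Corollary:Conjecture D for squares over finite fields}); note $\End_{\Frob_q}(P^2(X,\Q_\ell)(1))$ is the commutant of $\Frob_q$ inside $\End(P^2(X,\Q_\ell)(1))$. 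The key point is that under these identifications the motivic involution $\iota$ (transpose of correspondences) matches the involution ``adjoint with respect to the cup product pairing,'' as already remarked in the text before the lemma; I would spell out why, namely that the cycle-class map is compatible with composition and transposition of correspondences and that the cup-product pairing on $H^2(X,\Q_\ell)(1)$ is realized by a correspondence (the class of the diagonal, suitably interpreted), so that ${}^tf$ realizes to the adjoint of the realization of $f$. Granting this, an element $f$ of $(\End(\mathfrak{t}(X))\otimes_\Q R)^\times$ satisfies $f\iota(f)=1$ if and only if its $\Q_\ell$-realization is an isometry of $T(X)_\ell$ commuting with $\Frob_q$; the same bijection on $R$-points for all $\Q_\ell$-algebras $R$ gives an isomorphism of algebraic groups $I(\mathfrak{t}(X))_{\Q_\ell}\xrightarrow{\sim}\mathrm{O}_{\Frob_q}(T(X)_\ell)$, and likewise $\mathrm{O}(\p(X))_{\Q_\ell}\xrightarrow{\sim}\mathrm{O}_{\Frob_q}(P^2(X,\Q_\ell)(1))$, proving (2).

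For (1) it remains to upgrade $\mathrm{O}_{\Frob_q}(T(X)_\ell)$ to $\SO_{\Frob_q}(T(X)_\ell)$ and to prove connectedness of $I(\mathfrak{t}(X))$. For the first, I would argue that the determinant of any isometry of $T(X)_\ell$ commuting with $\Frob_q$ is $+1$: an element of $\mathrm{O}_{\Frob_q}(T(X)_\ell)$ preserves the slope decomposition of $T(X)_\cris$ (or, $\ell$-adically, the generalized eigenspace decomposition for $\Frob_q$), and the pairing puts the $\nu_q=1/h$ part in perfect duality with the $\nu_q=-1/h$ part while the slope-$0$ part is self-dual; since $\Frob_q$ acts on the transcendental motive with characteristic polynomial $Q(T)^e$ for the single irreducible $Q$ of Proposition \ref{Proposition:L-function of K3}, and $Q$ has even degree with roots coming in pairs $\alpha,q/\alpha$, one checks the determinant on each $\Frob_q$-stable self-dual piece is $+1$ and the two dual pieces contribute inverse determinants, forcing total determinant $1$. (Alternatively, one may invoke that $-1\notin\mathrm{O}_{\Frob_q}$ can occur, so the cleaner route is the eigenspace-pairing computation just sketched, or to cite that $I(\mathfrak{t}(X))$ is by construction a form of $\SO$ because it sits inside Kisin's group $I^{\KS}$ and the Kuga--Satake comparison forces the special orthogonal condition.)

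The main obstacle I anticipate is the connectedness claim, since a priori the $\Q_\ell$-group $\SO_{\Frob_q}(T(X)_\ell)$—the centralizer of a semisimple element in a special orthogonal group—need not be connected. Here I would use the structure theorem of the previous section: by Theorem \ref{Theorem:structure of endomorphism ring}, $\End(\mathfrak{t}(X))$ is a central division algebra $D$ over the field $F=\Q[\Fr_X]$, and $F$ is stable under $\iota$ with $F^{\iota=1}$ a subfield $F_0$ (the fixed field of the involution $\alpha\mapsto q/\alpha$, which is totally real of degree $d(X)$), so that $I(\mathfrak{t}(X))$ is the unitary group $\mathrm{U}(D,\iota)$ of a division algebra with involution of the second kind over $F/F_0$; such a unitary group is well known to be connected. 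Thus connectedness follows from the simplicity of $\mathfrak{t}(X)$ rather than from any $\ell$-adic computation, and this is the step where the results of Section \ref{Section:The transcendental motive of a K3 surface over a finite field} are genuinely used. Assembling these pieces gives both isomorphisms and the connectedness of $I(\mathfrak{t}(X))$, completing the proof.
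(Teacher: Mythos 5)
Your proposal is essentially correct and proves the same statement, but it takes a genuinely different route for the crucial point of part (1)---namely the connectedness of $I(\mathfrak{t}(X))$ and the identification with $\SO$ rather than $\mathrm{O}$.  The paper does this by a single $\ell$-adic computation: over $\overline{\Q}_\ell$, the semisimplicity of $\Frob_q$, together with the fact that no eigenvalue is a root of unity (so the distinct eigenvalues pair up as $\alpha_i,\alpha_i^{-1}$ with no self-paired $\alpha=\pm1$), gives a direct identification
\[
\mathrm{O}_{\Frob_q}(T(X)_\ell)_{\overline{\Q}_\ell}\cong\prod_{i=1}^{d}\GL(T_i),
\]
since the cup-product pairing puts the $\alpha_i$-eigenspace $T_i$ in perfect duality with the $\alpha_i^{-1}$-eigenspace.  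This computation simultaneously shows connectedness of $\mathrm{O}_{\Frob_q}(T(X)_\ell)$ (hence of $I(\mathfrak{t}(X))_{\Q_\ell}$, hence of $I(\mathfrak{t}(X))$) and that it equals $\SO_{\Frob_q}(T(X)_\ell)$ (a connected subgroup of $\mathrm{O}$ lies in $\SO$).  You instead split the argument into a separate determinant computation via the slope/eigenspace decomposition and a ``motivic'' connectedness argument: using Theorem \ref{Theorem:structure of endomorphism ring}, you identify $I(\mathfrak{t}(X))$ as a Weil restriction of the unitary group of a division algebra with involution of the second kind over $\Q[\Fr_X]/\Q[\Fr_X]^{\iota}$, which is a form of $\GL_e$ and in particular connected.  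Both arguments are valid.  The paper's is more self-contained (it only needs semisimplicity of Frobenius and the absence of roots of unity among the eigenvalues), whereas your route makes a nice conceptual point by deriving connectedness from the simplicity of $\mathfrak{t}(X)$; the paper effectively performs the same algebra-with-involution analysis later, in the proof of Lemma \ref{Lemma:positive involution and compact unitary}.

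Two small corrections to watch.  After the Tate twist, eigenvalues of $\Frob_q$ on $T(X)_\ell\subset H^2(X,\Q_\ell)(1)$ pair as $\alpha\leftrightarrow\alpha^{-1}$, not $\alpha\leftrightarrow q/\alpha$.  And your aside that ``$-1\notin\mathrm{O}_{\Frob_q}$ can occur'' is backwards: $-1$ always commutes with $\Frob_q$, and since $\dim T(X)_\ell=2d(X)e(X)$ is even one has $\det(-1)=1$, so $-1\in\SO_{\Frob_q}(T(X)_\ell)$; the real content is that no element of $\mathrm{O}_{\Frob_q}$ has determinant $-1$, which is exactly what the eigenspace-pairing (or the connectedness argument) delivers.
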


\begin{proof}
(1) By $(\ref{equation:endomorphism ring realization})$, we obtain
$I(\mathfrak{t}(X))_{\Q_\ell} \overset{\sim}{\to} \mathrm{O}_{\Frob_q}(T(X)_\ell)$.
We show that
$\mathrm{O}_{\Frob_q}(T(X)_\ell)$
is connected; this implies (1).
Since no eigenvalue of $\Frob_q$ acting on $T(X)_\ell$ is a root of unity, we may write $\alpha_1, \alpha^{-1}_1, \dotsc, \alpha_d, \alpha^{-1}_d$ for the distinct eigenvalues.
Let $T_i \subset T(X)_\ell \otimes_{\Q_\ell} \overline{\Q}_\ell$ denote the eigenspace corresponding to $\alpha_i$.
Then we have the following description
\[
\mathrm{O}_{\Frob_q}(T(X)_\ell)_{\overline{\Q}_\ell} \cong \prod^d_{i=1} \GL(T_i)
\]
since the action of $\Frob_q$ on $T(X)_\ell$ is semisimple; see also the proof of \cite[Lemma 10.10]{Ito-Ito-Koshikawa}.
In particular $\mathrm{O}_{\Frob_q}(T(X)_\ell)$ is connected.

(2)
The $\ell$-adic realization induces an isomorphism
\[
\End(\p(X)) \otimes_\Q \Q_\ell \overset{\sim}{\to} \End_{\Frob_q}(P^2(X, \Q_\ell)(1))
\]
similar to $(\ref{equation:endomorphism ring realization})$,
and $(\ref{equation:primitive orthogonal group realizarion})$ follows.
\end{proof}

Let $(\mathscr{L})^\perp$ denote the orthogonal complement of $\mathscr{L}$ in $\Pic(X_{\overline{\F}_q})_\Q$, on which $\Frob_q$ acts.
We have a natural decomposition
\[
\mathrm{O}(\p(X)) = \mathrm{O}_{\Frob_q}((\mathscr{L})^\perp) \times I(\mathfrak{t}(X)),
\]
where $\mathrm{O}_{\Frob_q}((\mathscr{L})^\perp)$ is the centralizer of
$\Frob_q$ in the orthogonal group
$\mathrm{O}((\mathscr{L})^\perp)$
of the quadratic space $(\mathscr{L})^\perp$ over $\Q$.
We define
\[
I(\p(X)):= \SO_{\Frob_q}((\mathscr{L})^\perp) \times I(\mathfrak{t}(X)),
\]
so we have by $(\ref{equation:transcendental orthogonal group realizarion})$
\begin{equation}\label{equation:neutral component realization}
    I(\p(X))_{\Q_\ell} \cong \SO_{\Frob_q}(P^2(X, \Q_\ell)(1)).
\end{equation}
If the action of $\Frob_q$ on $(\mathscr{L})^\perp$ is trivial, then $I(\p(X))$ is the neutral component of $\mathrm{O}(\p(X))$.

\subsection{Relative homological motives}\label{Subsection:Relative homological motives}

Let $k$ be a finite field $\F_q$ or $\overline{\F}_q$,
and let $K$ be a finite totally ramified extension of $W(k)[1/p]$ with ring of integers $\mathcal{O}_K$.
We will describe our results using
the category
$\mathscr{M}_{\hom}(\mathcal{O}_K)$
of relative homological motives over $\mathcal{O}_K$.
The objects of
$\mathscr{M}_{\hom}(\mathcal{O}_K)$
are the triples $(\mathcal{X}, q, m)$, where $\mathcal{X}$ is a smooth projective scheme over $\mathcal{O}_K$ of constant relative dimension, say $d$,
\[
q \in \CH^d(\mathcal{X} \times_{\Spec \mathcal{O}_K} \mathcal{X})_\Q / \sim_{\hom}
\]
is an idempotent, and $m$ is an integer, and the morphisms are given by
\[
\Hom_{\mathscr{M}_{\hom}(\mathcal{O}_K)}((\mathcal{X}, q, m), (\mathcal{Y}, r, n))=r(\CH^{d+n-m}(\mathcal{X} \times_{\Spec \mathcal{O}_K} \mathcal{Y})_\Q / \sim_{\hom}) q.
\]
Here $\sim_{\hom}$ denotes the homological equivalence with respect to the $\ell$-adic cohomology theory
$\mathcal{X} \mapsto H^*_\et(\mathcal{X}_{\overline{K}}, \Q_\ell)$ for some prime number $\ell$, where $\mathcal{X}_{\overline{K}}=\mathcal{X} \times_{\Spec \mathcal{O}_K} \Spec \overline{K}$ is the geometric generic fiber.
This coincides with the homological equivalence with respect to the Betti cohomology theory
$\mathcal{X} \mapsto H^*(\mathcal{X}_{\C}, \Q)$
for any embedding $\mathcal{O}_K \hookrightarrow \C$.
We have the following functors
\[
\mathscr{M}_{\hom}(\mathcal{O}_K) \to \mathscr{M}_{\hom}(k) \to \mathscr{M}_{\mathrm{num}}(k),
\]
where $\mathscr{M}_{\hom}(k)$ is the category of $\ell$-adic homological motives over $k$.
Let
\[
\mathrm{sp} \colon \mathscr{M}_{\hom}(\mathcal{O}_K)\to \mathscr{M}_{\mathrm{num}}(k)
\]
denote the composition of the above two functors.

There is a relative version of the (homological) transcendental motive.
Let $\mathcal{X}$ be a smooth projective scheme over $\mathcal{O}_K$ whose fibers are K3 surfaces.
In this paper, we call such a scheme $\mathcal{X}$ a projective K3 surface over $\mathcal{O}_K$.
With the same arguments as in Section \ref{Subsection:The transcendental motive},
we can define the homological transcendental motive
\[
\mathfrak{t}(\mathcal{X}_K)=(\mathcal{X}_K, \pi_{\tr}, 1) \in \mathscr{M}_{\hom}(K)
\]
for the generic fiber $\mathcal{X}_K$; see also Section \ref{Subsection:Remarks on the transcendental motives in characteristic 0}.
By using
\begin{equation}\label{equation:relative homological Chow group}
    \CH^{2}(\mathcal{X} \times_{\Spec \mathcal{O}_K} \mathcal{X})_\Q / \sim_{\hom} \overset{\sim}{\to} \CH^{2}(\mathcal{X}_K \times_{\Spec K} \mathcal{X}_K)_\Q / \sim_{\hom},
\end{equation}
we can regard the idempotent $\pi_{\tr}$ as an idempotent in $\CH^{2}(\mathcal{X} \times_{\Spec \mathcal{O}_K} \mathcal{X})_\Q/ \sim_{\hom}$.
Then we put
\[
\mathfrak{t}(\mathcal{X}):=(\mathcal{X}, \pi_{\tr}, 1) \in \mathscr{M}_{\hom}(\mathcal{O}_K).
\]

Let $\mathcal{L}$ be a relatively ample line bundle on $\mathcal{X}$.
Let $\p(\mathcal{X}_K) \in \mathscr{M}_{\hom}(K)$ denote the primitive part of the homological motive
$\mathfrak{h}^2(\mathcal{X}_K)(1)$ with respect to $\mathcal{L}_K$.
Similarly,
we can define
the relative version
$\p(\mathcal{X}) \in \mathscr{M}_{\hom}(\mathcal{O}_K)$
of $\p(\mathcal{X}_K)$.
We have
$\mathrm{sp}(\p(\mathcal{X}))=\p(\mathcal{X}_k)$ in $\mathscr{M}_{\mathrm{num}}(k)$, where $\p(\mathcal{X}_k)$ is the primitive part of $\mathfrak{h}^2(\mathcal{X}_k)(1)$ with respect to $\mathcal{L}_k$.
Thus we obtain a specialization map
$
\End(\p(\mathcal{X})) \to \End(\p(\mathcal{X}_k)).
$

On the other hand,
the specialization $\mathrm{sp}(\mathfrak{t}(\mathcal{X}))$
need not be isomorphic to the numerical motive $\mathfrak{t}(\mathcal{X}_k)$.
In fact, we have
$\mathrm{sp}(\mathfrak{t}(\mathcal{X}))=\mathfrak{t}(\mathcal{X}_k)$
if and only if
the specialization map
$
\Pic(\mathcal{X}_{\overline{K}})_\Q \to \Pic(\mathcal{X}_{\overline{\F}_q})_\Q
$
is bijiective.
In this case, we have a specialization map
\begin{equation}\label{equation:specialization of motivic homomorphisms}
    \End(\mathfrak{t}(\mathcal{X})) \to \End(\mathfrak{t}(\mathcal{X}_{k})).
\end{equation}
(In fact, one can define a natural map $\End(\mathfrak{t}(\mathcal{X})) \to \End(\mathfrak{t}(\mathcal{X}_{k}))$ even if $
\Pic(\mathcal{X}_{\overline{K}})_\Q \to \Pic(\mathcal{X}_{\overline{\F}_q})_\Q
$ is not bijective.)

\begin{rem}\label{Remark:endomorphism ring relative case}
It follows from $(\ref{equation:relative homological Chow group})$ that
\begin{equation}\label{equation:endomorphism ring relative case}
\End(\mathfrak{t}(\mathcal{X})) \overset{\sim}{\to} \End(\mathfrak{t}(\mathcal{X}_K)) \quad \text{and} \quad \End(\p(\mathcal{X})) \overset{\sim}{\to} \End(\p(\mathcal{X}_K)).
\end{equation}
In particular, we see that $\End(\mathfrak{t}(\mathcal{X}))$ is a number field by Proposition \ref{Proposition:transcendental motive is simple in characteristic 0}.
\end{rem}

\subsection{The Kuga--Satake construction}\label{Subsection:The Kuga--Satake construction}

Let us briefly recall a few facts about the Kuga--Satake construction from \cite{MadapusiPera, Kim-MadapusiPera, Ito-Ito-Koshikawa}.
We follow the setting of \cite{Ito-Ito-Koshikawa}.
Let $X$ be a K3 surface over $\F_q$.
As in \cite[Section 6.2]{Ito-Ito-Koshikawa},
after replacing $\F_q$ by its finite extension, we can
\begin{itemize}
    \item find a primitive ample line bundle $\mathscr{L}$ on $X$ (in the sense of \cite[Section 3]{MadapusiPera}), and
    \item attach a certain abelian variety $A$ over $\F_q$, called the Kuga--Satake abelian variety,
    to the polarized K3 surface $(X, \mathscr{L})$.
\end{itemize}
We have the following natural embedding compatible with Frobenius actions
\begin{equation}\label{equation:Kuga--Satake embedding}
    P^2(X, \Q_\ell)(1) \hookrightarrow \End_{\Q_\ell}(T_\ell(A)_\Q),
\end{equation}
where
$T_\ell(A)_\Q$ is the rational Tate module of $A_{{\overline{\F}}_q}$.

Following Kisin \cite{KisinModp},
we can attach a certain reductive group $I^{\KS}$ over $\Q$ to $(X, \mathscr{L})$; see \cite[Section 8.1]{Ito-Ito-Koshikawa} (it was denoted by $I$ there).
It is a subgroup of $(\End_{\overline{\F}_q}(A_{\overline{\F}_q})\otimes_\Z \Q)^\times$ and contains the subgroup of scalars $\G_m$.
Moreover, it acts on $P^2(X, \Q_\ell)(1)$ via the embedding $(\ref{equation:Kuga--Satake embedding})$ and we obtain a homomorphism over $\Q_\ell$
\begin{equation}\label{equation:I to SO(P)}
   I^{\KS}_{\Q_\ell} \to \SO(P^2(X, \Q_\ell)(1)).
\end{equation}
The image of this map is contained in the centralizer
of $\Frob_{q^m}$ in
$\SO(P^2(X, \Q_\ell)(1))$
for a sufficiently divisible $m$.

\begin{rem}\label{Remark:GSpin and SO}
We put $P_\ell:=P^2(X, \Q_\ell)(1)$.
The algebraic group $I^{\KS}_{\Q_\ell}$ can be (noncanonically) embedded into the general spin group $\GSpin(P_\ell)$ such that
$(\ref{equation:I to SO(P)})$ factors through the usual homomorphism
\[
\GSpin(P_\ell) \to \SO(P_\ell)
\]
induced by the adjoint action of $\GSpin(P_\ell)$ on the Clifford algebra $\Cl(P_\ell)$.
Let us remark that,
following Madapusi Pera,
we construct the Kuga--Satake abelian variety $A$ in a slightly different setting from usual in order to treat the case where the degree of the polarization $\mathscr{L}$ is divisible by $p$.
In particular, $\dim_{\Q_\ell} T_\ell(A)_\Q=2^{22}$ is larger than $\dim_{\Q_\ell}\Cl(P_\ell)=2^{21}$.
\end{rem}

The following theorem is a direct consequence of the main results in \cite{Ito-Ito-Koshikawa}.

\begin{thm}\label{Theorem:compare with Kisin's algebraic group}
Assume that $X$ is of finite height.
We let
\[
\tau_\ell \colon I^{\KS}_{\Q_\ell} \to \SO_{\Frob_{q^m}}(P^2(X, \Q_\ell)(1))
\]
denote the map induced from $(\ref{equation:I to SO(P)})$ for a sufficiently divisible $m$.
Then the following assertions hold.
\begin{enumerate}
    \item $\tau_\ell$ is surjective. In particular, we have
    \[
    \SO_{\Frob_{q^m}}(P^2(X, \Q_\ell)(1))=\SO_{\Frob_{q^{n}}}(P^2(X, \Q_\ell)(1))
    \]
    and hence
    $
    I(\p(X_{\F_{q^m}})) = I(\p(X_{\F_{q^n}}))
    $
    for every $n$ divisible by $m$.
    \item $\tau_\ell$ maps $I^{\KS}(\Q)$ into $I(\p(X_{\F_{q^m}}))(\Q)$ under the identification $(\ref{equation:neutral component realization})$.
    Moreover, it descends to a surjection
    \[
    \tau \colon I^{\KS} \to I(\p(X_{\F_{q^m}}))
    \]
    over $\Q$. The kernel of $\tau$ is the subgroup of scalars $\G_m \subset I^{\KS}$, or in other words, $\tau$ induces an isomorphism $I^{\KS}/\G_m \cong I(\p(X_{\F_{q^m}}))$.
    \item The map $\tau(\Q) \colon I^{\KS}(\Q) \twoheadrightarrow I(\p(X_{\F_{q^m}}))(\Q)$ is surjective.
\end{enumerate}
\end{thm}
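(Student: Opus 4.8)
\emph{Proof strategy.} The plan is to deduce all three parts from the main results of \cite{Ito-Ito-Koshikawa}, together with the formalism of central isogenies and Hilbert's Theorem 90. Throughout, fix $m$ large enough that the primitive polarization and the Kuga--Satake abelian variety $A$ are already defined over $\F_{q^m}$ and that geometric Frobenius acts trivially on $\Pic(X_{\overline{\F}_q})_\Q$ after base change to $\F_{q^m}$ (possible, since its action on this finite-dimensional space is of finite order). For such $m$, $I(\p(X_{\F_{q^m}}))$ is the neutral component of $\mathrm{O}(\p(X_{\F_{q^m}}))$, and $(\ref{equation:neutral component realization})$ identifies $I(\p(X_{\F_{q^m}}))_{\Q_\ell}$ with $\SO_{\Frob_{q^m}}(P^2(X,\Q_\ell)(1))$; the latter is \emph{connected}, since it decomposes as $\SO(((\mathscr{L})^\perp)_{\Q_\ell}) \times \SO_{\Frob_{q^m}}(T(X_{\F_{q^m}})_\ell)$ because the two summands of $P^2(X,\Q_\ell)(1)$ share no Frobenius eigenvalue, and each factor is connected --- the transcendental one by the description in the proof of Lemma \ref{Lemma:orthogonal group realization}(1). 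Any further enlargement of $m$ is harmless once (1) is known.

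For (1), I would invoke the group-theoretic heart of our analysis of the reduction of $A$ in \cite{Ito-Ito-Koshikawa} (the same analysis that underlies the Tate conjecture for $X^2$): through Kisin's description of the $\overline{\F}_p$-points of the integral canonical model of the relevant $\GSpin$-Shimura variety, valid for all $p$, the embedding $I^{\KS}_{\Q_\ell}\hookrightarrow\GSpin(P_\ell)$ of Remark \ref{Remark:GSpin and SO} identifies $I^{\KS}_{\Q_\ell}$ with the centralizer $Z_{\GSpin(P_\ell)}(\Frob_{q^m})$ of the semisimple element $\Frob_{q^m}$, compatibly with the map to $\SO(P_\ell)$; in particular $I^{\KS}$ is connected. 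Since $\GSpin(P_\ell)\to\SO(P_\ell)$ (through which $(\ref{equation:I to SO(P)})$ factors) is a central isogeny with kernel the scalars $\G_m$, and $\G_m$ is central, $\Frob_{q^m}$-fixed, and equal to $\ker\tau_\ell$ (using $\G_m\subset I^{\KS}$), the image of $\tau_\ell$ is a connected subgroup of $\SO_{\Frob_{q^m}}(P_\ell)$ of the same dimension as $\SO_{\Frob_{q^m}}(P_\ell)$ (the dimension drop under the central isogeny being absorbed by the $\Frob_{q^m}$-fixed kernel $\G_m$); connectedness of the target then forces $\tau_\ell$ to be onto. The displayed stability follows: the groups $\SO_{\Frob_{q^n}}(P^2(X,\Q_\ell)(1))$ for $n$ divisible by $m$ form an increasing chain of connected groups all of dimension $\dim I^{\KS}-1$ (one can also see directly that the Zariski closure of $\Frob_{q^m}$ has stabilized), whence $I(\p(X_{\F_{q^m}})) = I(\p(X_{\F_{q^n}}))$ by $(\ref{equation:neutral component realization})$ again.

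For (2), I would produce $\tau$ over $\Q$ directly, so that $\tau_\ell$ becomes its base change. The inclusion $I^{\KS}\subset(\End_{\overline{\F}_q}(A_{\overline{\F}_q})\otimes_\Z\Q)^\times$ makes $I^{\KS}$ act on the numerical motive $\mathfrak{h}^1(A)$ by algebraic correspondences, hence on $\End(\mathfrak{h}^1(A))$; and the algebraicity of the Kuga--Satake correspondence (again from \cite{Ito-Ito-Koshikawa}) realizes $\p(X_{\F_{q^m}})(1)$ as a sub-object of $\End(\mathfrak{h}^1(A))$ in $\mathscr{M}_{\num}(\F_{q^m})$, compatibly with $(\ref{equation:Kuga--Satake embedding})$. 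Together these give a homomorphism of algebraic $\Q$-groups $I^{\KS}\to\mathrm{O}(\p(X_{\F_{q^m}}))$ whose $\ell$-adic realization is $\tau_\ell$ for every $\ell\neq p$; connectedness of $I^{\KS}$ puts its image inside the neutral component $I(\p(X_{\F_{q^m}}))$, which gives the first claim of (2) and defines $\tau$. Surjectivity of $\tau$ is (1) read over $\Q_\ell$, and $\ker\tau$ is a $\Q$-subgroup of $I^{\KS}$ whose base change to every $\Q_\ell$ is $\ker\tau_\ell = \G_m$, so $\ker\tau = \G_m$; being a surjection of smooth $\Q$-groups with kernel $\G_m$, $\tau$ induces an isomorphism $I^{\KS}/\G_m\overset{\sim}{\to}I(\p(X_{\F_{q^m}}))$.

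Part (3) is then formal: the exact sequence $1\to\G_m\to I^{\KS}\to I(\p(X_{\F_{q^m}}))\to 1$ of algebraic groups over $\Q$ gives an exact sequence of pointed sets $I^{\KS}(\Q)\to I(\p(X_{\F_{q^m}}))(\Q)\to H^1(\Q,\G_m)$, and $H^1(\Q,\G_m)=0$ by Hilbert's Theorem 90, so $\tau(\Q)$ is surjective. The main obstacle --- in fact the only nonformal point --- is the pair of inputs quoted in the two middle paragraphs: Kisin's identification of $I^{\KS}_{\Q_\ell}$ with the Frobenius centralizer in $\GSpin(P_\ell)$, and the algebraicity over $\F_{q^m}$ of the Kuga--Satake correspondence (hence of the $I^{\KS}$-action on $\p(X_{\F_{q^m}})$). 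These are exactly the substantive results of \cite{Ito-Ito-Koshikawa}, resting on Kisin's integral models and our proof of the Tate conjecture; granting them, everything else is the routine group theory and Galois cohomology above.
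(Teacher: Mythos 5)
Your treatment of parts (1) and (3) matches the paper's proof: part (1) via Kisin's identification of $I^{\KS}_{\Q_\ell}$ with the Frobenius centralizer in $\GSpin(P_\ell)$ together with the central isogeny $\GSpin(P_\ell)\to\SO(P_\ell)$ and connectedness, and part (3) via $H^1(\Q,\G_m)=0$. The gap is in part (2), which is also where the real substance of the theorem lies.

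You propose to produce $\tau$ over $\Q$ directly by invoking ``the algebraicity of the Kuga--Satake correspondence (again from \cite{Ito-Ito-Koshikawa})'' to realize $\p(X_{\F_{q^m}})(1)$ as a sub-object of $\End(\mathfrak{h}^1(A))$ in $\mathscr{M}_{\num}(\F_{q^m})$. This is not a result of \cite{Ito-Ito-Koshikawa}, and in fact is not known: the Kuga--Satake correspondence is not known to be given by algebraic cycles even over $\C$ (only to be absolute Hodge, by Deligne--Andr\'e), and the situation over finite fields is not better. Knowing that the cohomological embedding $(\ref{equation:Kuga--Satake embedding})$ arises from a morphism of numerical motives is essentially the Tate conjecture for a cycle on $X\times A^2$, which the Tate conjecture for $X^2$ proved in \cite{Ito-Ito-Koshikawa} does not give you. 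The very fact that $I^{\KS}$ acts on $\mathfrak{h}^1(A)$ by correspondences is fine (its elements live in $\End(A_{\overline{\F}_q})\otimes\Q$), but that does not transport to an action on $\p(X_{\F_{q^m}})$ without the motivic Kuga--Satake correspondence, and that is exactly the step you cannot quote.

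The paper avoids this by a \emph{pointwise} argument. Given $\eta\in I^{\KS}(\Q)$: compactness of $(I^{\KS}/\G_m)(\R)$ (positivity of the Rosati involution) forces $\eta$ to be semisimple and to lie in a $\Q$-maximal torus; the CM lifting theorem \cite[Theorem 9.7]{Ito-Ito-Koshikawa} and the Mukai--Buskin Hodge conjecture for rational Hodge isometries then produce a characteristic-$0$ lift $(\mathcal{X},\mathcal{L})$ of $(X_{\overline{\F}_q},\mathscr{L}_{\overline{\F}_q})$ together with an algebraic correspondence $\tilde\eta\in\End(\p(\mathcal{X}_\C))$ whose $\ell$-adic realization is $\tau_\ell(\eta)$; specializing shows $\tau_\ell(\eta)$ is itself the class of an algebraic cycle, i.e.\ lies in $I(\p(X_{\F_{q^m}}))(\Q)$. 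Only then does Zariski density of $I^{\KS}(\Q)$ in $I^{\KS}_{\overline{\Q}_\ell}$ (plus the computation of the kernel on $\ell$-adic realizations) descend $\tau_\ell$ to a $\Q$-homomorphism $\tau$. In other words, the algebraicity is obtained one rational point at a time through CM lifting and Buskin's theorem, not as a single motivic statement about the Kuga--Satake construction; this is the non-formal content your write-up is missing.
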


\begin{proof}
(1) Let $P_\ell:=P^2(X, \Q_\ell)(1)$.
As we already mentioned in Remark \ref{Remark:GSpin and SO},
we can embed $I^{\KS}_{\Q_\ell}$ into $\GSpin(P_\ell)$.
More precisely, by \cite[Corollary 9.9]{Ito-Ito-Koshikawa},
we can identify
$I^{\KS}_{\Q_\ell}$ with
the centralizer of a lift of $\Frob_{q^{n}}$ in $\GSpin(P_\ell)$ for all $n$ sufficiently divisible.
This fact implies $(1)$.

(2) It follows from the positivity of the Rosati involution that $(I^{\KS}/\G_m)(\R)$ is compact; see also the proof of \cite[Corollary 2.1.7]{KisinModp}.
Hence every element $\eta \in I^{\KS}(\Q)$ is semisimple and it is contained in a maximal torus of $I^{\KS}$ over $\Q$.
Thus, by \cite[Theorem 9.7]{Ito-Ito-Koshikawa} and the results of Mukai and Buskin \cite[Theorem 1.1]{Buskin} on the Hodge conjecture for rational Hodge isometries of K3 surfaces over $\C$,
there exist a finite extension $K$ of $W(\overline{\F}_q)[1/p]$ and a projective K3 surface $\mathcal{X}$ over $\mathcal{O}_K$ with a relatively ample line bundle $\mathcal{L}$ satisfying the following properties:
\begin{itemize}
    \item $(\mathcal{X}, \mathcal{L})$ is a lifting of $(X_{\overline{\F}_q}, \mathscr{L}_{\overline{\F}_q})$.
    \item For any embedding $K \hookrightarrow \C$,
    there exists an element
    $\tilde{\eta} \in \End(\p(\mathcal{X}_\C))$
    whose $\ell$-adic realization coincides with $\tau_\ell(\eta)$ via the identification
    \[
    P^2(\mathcal{X}_\C, \Q_\ell)(1) \cong P^2(X, \Q_\ell)(1).
    \]
    Here $P^2(\mathcal{X}_\C, \Q_\ell)(1)$ is the orthogonal complement of $\mathcal{L}_{\C}$ in $H^2_\et(\mathcal{X}_\C, \Q_\ell(1))$.
\end{itemize}
By using an analogue of Lemma \ref{Lemma:algebraically closed fields extension} and $(\ref{equation:endomorphism ring relative case})$,
we may regard $\tilde{\eta}$ as an element of
$\End(\p(\mathcal{X}))$
after enlarging $K$.
Then, by construction, the specialization of $\tilde{\eta}$ is contained in
$I(\p(X_{\F_{q^m}}))$ (using (1)) and is equal to $\tau_\ell(\eta)$ under the identification $(\ref{equation:neutral component realization})$.
This proves the first statement.
For the second statement, we remark that $I^{\KS}(\Q)$ is Zariski dense in $I^{\KS}_{\overline{\Q}_\ell}$.
Therefore, the base change of $\tau_\ell$ to $\overline{\Q}_\ell$ is compatible with the action of $\Aut(\overline{\Q}_\ell/\Q)$ and $\tau_\ell$ descends to a unique surjection
$
I^{\KS} \to I(\p(X_{\F_{q^m}}))
$
over $\Q$.
The last assertion follows from the fact that the kernel of
$
\GSpin(P_\ell) \to \SO(P_\ell)
$
is $\G_{m, \Q_\ell}$.

(3) This follows from (2) and Hilbert's theorem 90.
\end{proof}

Using Theorem \ref{Theorem:compare with Kisin's algebraic group} and the positivity of the Rosati involution for Kuga--Satake abelian varieties,
we shall deduce that the involution $\iota_\R$ is a \textit{positive involution} of $\End(\mathfrak{t}(X))_\R$, i.e.\
we have
$\Tr_{\End(\mathfrak{t}(X))_\R/\R}(f\iota(f)) >0$
for every nonzero $f \in \End(\mathfrak{t}(X))_\R$.

To do this, we need a lemma.
Let $X$ be a K3 surface over $\F_q$ of finite height.
Recall that $\Q[\Fr_X]$ is a field and
$\End(\mathfrak{t}(X))$ is a central simple algebra over $\Q[\Fr_X]$ (Theorem \ref{Theorem:structure of endomorphism ring}).
Moreover Proposition \ref{Proposition:L-function of K3} implies that $\Q[\Fr_X]$ is a CM field, i.e.\ a purely imaginary quadratic extension of a totally real number field.
The restriction of $\iota$ to the center $\Q[\Fr_X]$ is the complex conjugation and  $\iota(\Fr_X)=\Fr^{-1}_X$.

\begin{lem}\label{Lemma:positive involution and compact unitary}
The involution $\iota_\R$ is a positive involution of $\End(\mathfrak{t}(X))_\R$ if and only if $I(\mathfrak{t}(X))(\R)$ is compact.
\end{lem}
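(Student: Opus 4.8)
The plan is to reduce the statement to a standard fact about involutions on semisimple real algebras with positive involution: an involution $\iota$ on a finite-dimensional semisimple $\R$-algebra $B$ is positive if and only if the associated unitary group $U(B,\iota)=\{b\in B^\times \mid b\iota(b)=1\}$ is compact. Granting this, the lemma is immediate upon taking $B=\End(\mathfrak{t}(X))_\R$, since by definition $I(\mathfrak{t}(X))(\R)=U(B,\iota_\R)$. So the real content is to justify that equivalence in the present situation, where we have at our disposal the precise structure of $B$ from Theorem~\ref{Theorem:structure of endomorphism ring}: $B$ is a central division algebra over the field $E:=\Q[\Fr_X]$, which by Proposition~\ref{Proposition:L-function of K3} is a CM field, and $\iota$ restricts to complex conjugation on $E$ with $\iota(\Fr_X)=\Fr_X^{-1}$.

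First I would decompose $B_\R=\End(\mathfrak{t}(X))_\R$ according to the embeddings of the maximal totally real subfield $E_0\subset E$ into $\R$. Since $E/E_0$ is purely imaginary quadratic, each such embedding extends to a pair of conjugate embeddings of $E$ into $\C$, and $\iota$ swaps them; hence $E_\R=E\otimes_\Q\R\cong\prod \C$ and $\iota_\R$ acts on each factor as complex conjugation. Correspondingly $B_\R\cong\prod_v B_v$, where each $B_v$ is a central simple algebra over $\C$ equipped with an $\R$-algebra involution of the second kind lying over complex conjugation, and $\iota_\R$ preserves this product decomposition, so $U(B_\R,\iota_\R)\cong\prod_v U(B_v,\iota_v)$ and likewise the trace pairing splits as an orthogonal sum over $v$. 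Thus both positivity of $\iota_\R$ and compactness of $I(\mathfrak{t}(X))(\R)$ are equivalent to the corresponding statements factor-by-factor, and I am reduced to the case $B_v\cong M_n(\C)$ with an involution of the second kind.

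Next, for a single factor $B_v\cong M_n(\C)$ with involution of the second kind $\sigma$, I would invoke the classification of such involutions: up to inner automorphism, $\sigma$ is given by $\sigma(M)=H^{-1}\,{}^t\!\overline{M}\,H$ for a nondegenerate Hermitian matrix $H$, and the unitary group $U(B_v,\sigma)$ is the corresponding group $U(H)$. Then $U(H)$ is compact precisely when $H$ is definite, and one computes $\Tr_{B_v/\R}(M\sigma(M))=2\,\Re\,\tr(M\,H^{-1}\,{}^t\!\overline M\,H)$, which (after conjugating $H$ to a diagonal form) is visibly a positive multiple of a weighted sum $\sum_{i,j}\varepsilon_i\varepsilon_j|m_{ij}|^2$ with $\varepsilon_i=\pm1$ the signs of the diagonal entries of $H$; this is positive definite exactly when all $\varepsilon_i$ agree, i.e.\ exactly when $H$ is definite. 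Assembling the factors gives the lemma. The main obstacle is the bookkeeping in this last computation: one must track the trace form through the Morita equivalence / inner-automorphism normalization carefully enough to see that the sign pattern governing definiteness of $H$ is the \emph{same} sign pattern governing positivity of the trace form, and that the (harmless) positive scalar $2$ and the reduced-vs-ordinary trace discrepancies do not interfere. Everything else is the standard dictionary between positive involutions and compact unitary groups, and could alternatively be cited directly (e.g.\ from the theory of involutions on semisimple algebras as in the references on polarizations of abelian varieties).
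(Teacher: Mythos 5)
Your proposal is correct and follows essentially the same route as the paper: decompose $\End(\mathfrak{t}(X))_\R$ over the CM center into a product of copies of $M_e(\C)$ each carrying an involution of the second kind, then reduce to the equivalence between positivity and compactness of the associated unitary group on a single factor. The only stylistic difference is that where you propose to verify the positivity-vs-definiteness equivalence by direct diagonalization of the Hermitian form, the paper instead cites the standard reference on involutions \cite[Proposition 2.18, Proposition 11.24]{BookInvolution}; your computation is correct (the sign pattern $\sum_{i,j}\varepsilon_i\varepsilon_j|m_{ij}|^2$ and the harmless positive scalar discrepancies check out) and simply makes explicit what the citation encapsulates.
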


\begin{proof}
We see that
$\End(\mathfrak{t}(X))_\R=\End(\mathfrak{t}(X)) \otimes_{\Q[\Fr_X]} (\Q[\Fr_X] \otimes_\Q \R)$
is the product of $d(X)$ copies of the matrix algebra $M_e(\C)$ and the involution $\iota_\R$ preserves each factor, where $e:=e(X)$.
Thus, it suffices to show that an involution $\iota'$ of $M_e(\C)$ whose restriction to the center $\C$ is the complex conjugation is a positive involution (viewing $M_e(\C)$ as an $\R$-algebra) if and only if the topological group
$
\{ \, A \in M_e(\C)^\times \, \vert \, A\iota'(A)=1 \, \}
$
is compact.
The proof of this is standard, but we include it for completeness.
There exists a hermitian matrix $U \in M_e(\C)^\times$ such that $\iota'(A)=U({^*}A)U^{-1}$
for every matrix $A \in M_e(\C)$
(see \cite[Proposition 2.18]{BookInvolution} for example).
Here ${^*}A$ is the conjugate transpose of $A$.
It follows from \cite[Proposition 11.24]{BookInvolution} that
$\iota'$ is a positive involution if and only if the hermitian form $\langle-, -\rangle_U$ on $\C^{\oplus e}$ associated with $U$ is definite.
On the other hand, since the above topological group is the unitary group associated with $\langle-, -\rangle_U$, it is compact if and only if $\langle-, -\rangle_U$ is definite.
This concludes the proof.
\end{proof}

\begin{cor}\label{Corollary:positive involution}
Let $X$ be a K3 surface over $\F_q$.
The involution $\iota_\R$ is a positive involution of $\End(\mathfrak{t}(X))_\R$.
\end{cor}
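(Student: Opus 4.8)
The plan is to reduce to the finite-height case, then transport the positivity of the Rosati involution of the Kuga--Satake abelian variety across the surjection of algebraic groups provided by Theorem \ref{Theorem:compare with Kisin's algebraic group}. First I would dispose of the supersingular case: if $X$ is supersingular then by Remark \ref{Remark:supersingular case} the transcendental motive $\mathfrak{t}(X)$ is zero, so $\End(\mathfrak{t}(X))_\R = 0$ and there is nothing to prove. Next, the positivity of $\iota_\R$ is insensitive to replacing $\F_q$ by a finite extension $\F_{q^m}$: the involution $\iota$ on $\End(\mathfrak{t}(X_{\F_{q^m}}))$ restricts to $\iota$ on the subalgebra $\End(\mathfrak{t}(X))$, and a positive involution on the larger $\R$-algebra restricts to a positive involution on any $\iota$-stable $\R$-subalgebra (the trace form argument is the standard one; cf.\ the reduction in \cite{Ito-Ito-Koshikawa}). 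So I may assume $X$ is of finite height and, after replacing $\F_q$ by $\F_{q^m}$ as in Theorem \ref{Theorem:compare with Kisin's algebraic group}, that the surjection $\tau \colon I^{\KS} \to I(\p(X)) = \SO_{\Frob_q}((\mathscr{L})^\perp) \times I(\mathfrak{t}(X))$ is available.

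By Lemma \ref{Lemma:positive involution and compact unitary}, it suffices to show that $I(\mathfrak{t}(X))(\R)$ is compact. Since $I(\mathfrak{t}(X))$ is a direct factor of $I(\p(X))$, it is enough to show $I(\p(X))(\R)$ is compact, equivalently (by Theorem \ref{Theorem:compare with Kisin's algebraic group}(2), which gives $I^{\KS}/\G_m \cong I(\p(X))$) that $(I^{\KS}/\G_m)(\R)$ is compact. This last fact is exactly what was recorded in the proof of Theorem \ref{Theorem:compare with Kisin's algebraic group}(2): it follows from the positivity of the Rosati involution on the Kuga--Satake abelian variety $A$, since $I^{\KS}$ sits inside $(\End_{\overline{\F}_q}(A_{\overline{\F}_q}) \otimes_\Z \Q)^\times$ and the Rosati involution endows $I^{\KS}/\G_m$ with an anisotropic real form, just as in the classical argument for Shimura varieties of abelian type (see \cite[Corollary 2.1.7]{KisinModp}). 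So the compactness of $I(\mathfrak{t}(X))(\R)$ follows, and with Lemma \ref{Lemma:positive involution and compact unitary} we conclude that $\iota_\R$ is positive.

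One can also phrase the transport of positivity more directly: the surjection $\tau$ identifies $I(\mathfrak{t}(X))$ with a quotient of a subgroup of $I^{\KS}$, and taking $\R$-points and using that a continuous surjective homomorphism of real Lie groups sends a compact group onto a compact group, the compactness of $(I^{\KS}/\G_m)(\R)$ pushes forward. The main (and essentially only) obstacle is making sure the group-theoretic bookkeeping is correct—namely that $I(\mathfrak{t}(X))$ really is a \emph{direct factor} of $I(\p(X))$ (this is the decomposition $I(\p(X)) = \SO_{\Frob_q}((\mathscr{L})^\perp) \times I(\mathfrak{t}(X))$ established in Subsection \ref{Subsection:Motivic isometries}, which rests on the orthogonal splitting $\p(X) = (\mathscr{L})^\perp \oplus \mathfrak{t}(X)$ as numerical motives), and that the identification in Theorem \ref{Theorem:compare with Kisin's algebraic group}(2) is via the realization $(\ref{equation:neutral component realization})$ so that ``$\iota$'' matches up with ``adjoint of the cup product pairing'' on both sides. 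Once this is in place, there are no further computations—the real input is entirely the positivity of the Rosati involution, already exploited in Section \ref{Section:CM liftings of K3 surfaces revisited}.
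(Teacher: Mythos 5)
Your proposal is correct and follows essentially the same route as the paper: dispose of the supersingular case, reduce to the finite-height case over a sufficiently large $\F_{q^m}$, apply Lemma \ref{Lemma:positive involution and compact unitary} to translate positivity into compactness of $I(\mathfrak{t}(X))(\R)$, note that $I(\mathfrak{t}(X))$ is a direct factor of $I(\p(X))$, and finally invoke Theorem \ref{Theorem:compare with Kisin's algebraic group}(2) together with the positivity of the Rosati involution to get compactness of $(I^{\KS}/\G_m)(\R)\cong I(\p(X_{\F_{q^m}}))(\R)$. The only cosmetic difference is the order of the two reductions (you reduce the positivity statement to the larger field before applying Lemma \ref{Lemma:positive involution and compact unitary}, whereas the paper applies the lemma first and then uses that $I(\mathfrak{t}(X))$ is a closed subgroup of the compact group $I(\mathfrak{t}(X_{\F_{q^m}}))(\R)$), which is immaterial.
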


\begin{proof}
If $X$ is supersingular, then $\mathfrak{t}(X)=0$ and there is nothing to check.
We may assume that $X$ is of finite height.
By Lemma \ref{Lemma:positive involution and compact unitary}, it suffices to show that $I(\mathfrak{t}(X))(\R)$ is compact.
After enlarging $\F_q$, we may assume that we are in the situation of Theorem \ref{Theorem:compare with Kisin's algebraic group}.
As stated in the proof of Theorem \ref{Theorem:compare with Kisin's algebraic group},
the positivity of the Rosati involution implies that
\[
(I^{\KS}/\G_m)(\R) \cong I(\p(X_{\F_{q^m}}))(\R)
\]
is compact, which in turn implies that $I(\mathfrak{t}(X))(\R)$ is also compact.
\end{proof}

\subsection{CM liftings of K3 surfaces over finite fields of finite height}\label{Subsection:CM liftings of K3 surfaces over finite fields of finite height}

In this subsection, we rewrite the main results in \cite{Ito-Ito-Koshikawa} on CM liftings without involving the Kuga--Satake construction and Kisin's algebraic group $I^{\KS}$.

Let us first recall the definition of K3 surfaces with \textit{complex multiplication} (CM) in characteristic $0$.
For a K3 surface $Y$ over an algebraically closed field, we denote by $\rho(Y):=\dim_\Q \Pic(Y)$ its Picard number.
We say that a (projective) K3 surface $Y$ over $\C$ has CM if
$\End_{\mathrm{Hdg}}(T(Y)_\Q)$ is a CM field whose degree over $\Q$ is equal to
$
22-\rho(Y)=\dim_\Q T(Y)_\Q$; see the proof of Proposition \ref{Proposition:transcendental motive is simple in characteristic 0} for the notation used here.
The Hodge conjecture holds true for the square $Y \times Y$; see \cite[Corollary 1.3]{Buskin}.
The Betti realization therefore induces an isomorphism
\begin{equation}\label{equation;Betti realization}
\End(\mathfrak{t}(Y)) \overset{\sim}{\to} \End_{\mathrm{Hdg}}(T(Y)_\Q).
\end{equation}
More generally, we make the following definition.

\begin{defn}\label{Definition:CM K3}
We say that a K3 surface $Y$ over a field $K$ of characteristic $0$ has CM if
$22-\rho(Y_{\overline{K}})=\dim_\Q \End(\mathfrak{t}(Y))$
and $\End(\mathfrak{t}(Y))$ is a CM field.
If furthermore $\End(\mathfrak{t}(Y))$ is isomorphic to a given CM field $E$, we say that $Y$ has CM by $E$.
\end{defn}

\begin{rem}\label{Remark:CM K3}
Let $Y$ be a K3 surface over $K$.
\begin{enumerate}
    \item Assume that $K=\C$. It follows from the results of Zarhin in \cite{Zarhin83} that $Y$ has CM if and only if the Mumford--Tate group of $T(Y)_\Q$ is commutative.
    \item Assume that $K$ can be embedded into $\C$.
    In \cite{Ito-Ito-Koshikawa}, we say that $Y$ has CM if
    $Y_\C$ has CM for some (and hence any) embedding $K \hookrightarrow \C$. (See \cite[Remark 9.5]{Ito-Ito-Koshikawa}.)
    In this case, there exists a finite extension $L$ of $K$ such that $Y_L$ has CM in the sense of Definition \ref{Definition:CM K3} by Lemma \ref{Lemma:algebraically closed fields extension} and ($\ref{equation;Betti realization}$).
    \item Assume that $K$ is algebraically closed for simplicity.
    It follows from \cite[Theorem 4]{Pjateckii-Shapiro-Shafarevich} that if $Y$ has CM then it is defined over a number field.
\end{enumerate}
\end{rem}

\begin{rem}\label{Remark:totally real or CM}
Let $Y$ be a K3 surface over $K$.
The number field $\End(\mathfrak{t}(Y))$ is in general either totally real or a CM field.
Moreover, the involution $\iota$ of $\End(\mathfrak{t}(Y))$ obtained by $f \mapsto {^t}f$ (cf.\ Section \ref{Subsection:Motivic isometries}) is a positive involution.
Indeed, we may assume that $K=\C$ as in Proposition \ref{Proposition:transcendental motive is simple in characteristic 0}.
Then the results follow from the Hodge-Riemann relation; see also the proof of \cite[Theorem 1.5.1]{Zarhin83}.
\end{rem}

We can now state the announced reformulation of our previous result on CM liftings.

\begin{thm}\label{Theorem:CM lifting revisited}
Let $X$ be a K3 surface over $\F_q$ of finite height.
Let $E \subset \End(\mathfrak{t}(X))$ be a maximal subfield that is stable under $\iota$.
Then $E$ is a CM field and there exist a finite extension $K$ of $W(\F_q)[1/p]$ with residue field $k$ and a projective K3 surface $\mathcal{X}$ over $\mathcal{O}_K$ satisfying the following properties:
\begin{enumerate}
    \item $\mathcal{X}$ is a lifting of $X_k$. 
    \item The specialization map
    $
    \Pic(\mathcal{X}_{\overline{K}})_\Q \to \Pic(\mathcal{X}_{\overline{\F}_q})_\Q
    $
    is an isomorphism, or equivalently
    $\mathrm{sp}(\mathfrak{t}(\mathcal{X}))=\mathfrak{t}({\mathcal{X}_{k}})$.
    \item The homomorphism
    $\End(\mathfrak{t}(\mathcal{X})) \to \End(\mathfrak{t}({\mathcal{X}_{k}}))$ (see $(\ref{equation:specialization of motivic homomorphisms})$) induces
    \[
    \End(\mathfrak{t}(\mathcal{X})) \overset{\sim}{\to} E \subset \End(\mathfrak{t}({\mathcal{X}_{k}})).
    \]
\end{enumerate}
In particular, the generic fiber $\mathcal{X}_K$ has CM by $E$.
\end{thm}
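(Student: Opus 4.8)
The plan is to derive the theorem from the CM lifting theorem for Kuga--Satake abelian varieties of \cite{Ito-Ito-Koshikawa}, applied to a maximal torus of $I^{\KS}$ built from $E$, and to identify the resulting characteristic-zero endomorphism field with $E$ by a rank count. First, that $E$ is a CM field. Since $X$ is of finite height, Theorem \ref{Theorem:structure of endomorphism ring} gives that $\End(\mathfrak{t}(X))$ is a central division algebra over $F:=\Q[\Fr_X]$, which by Proposition \ref{Proposition:L-function of K3} is a CM field on which $\iota$ acts as complex conjugation (as recalled before the statement). A maximal subfield of $\End(\mathfrak{t}(X))$ contains its center $F$, so $\iota|_E$ is a \emph{nontrivial} involution of the number field $E$; it is positive because $\iota$ is a positive involution of $\End(\mathfrak{t}(X))_\R$ by Corollary \ref{Corollary:positive involution}. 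A number field carrying a nontrivial positive involution is a CM field with that involution as its complex conjugation, so $E$ is a CM field.

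Next I set up the relevant torus. Replace $\F_q$ by a sufficiently divisible $\F_{q^m}$ so that we are in the situation of Theorem \ref{Theorem:compare with Kisin's algebraic group}. By Corollary \ref{Corollary:rank of motivic endomorphism ring} and Proposition \ref{Proposition:L-function of K3}, the largest possible degree over $\Q$ of a subfield of the central division algebra $\End(\mathfrak{t}(X_{\F_{q^m}}))$ equals $\dim_{\Q_\ell}T(X)_\ell=22-\rho(X_{\overline{\F}_q})$, which also equals $[E:\Q]$; hence $E$, being $\iota$-stable in $\End(\mathfrak{t}(X))\subset\End(\mathfrak{t}(X_{\F_{q^m}}))$, remains a maximal $\iota$-stable subfield of $\End(\mathfrak{t}(X_{\F_{q^m}}))$. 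Let $T_E\subset I(\mathfrak{t}(X_{\F_{q^m}}))$ be the subtorus with $T_E(R)=\{\,x\in(E\otimes_\Q R)^\times\mid x\iota(x)=1\,\}$; a comparison of ranks ($T_E$ has rank $\tfrac12[E:\Q]$, as does $I(\mathfrak{t}(X_{\F_{q^m}}))$ by Lemma \ref{Lemma:orthogonal group realization}) shows that $T_E$ is a maximal torus. Fixing also a maximal torus $T_0$ of $\SO_{\Frob_q}((\mathscr{L})^\perp)$, the product $\mathcal{T}:=T_0\times T_E$ is a maximal torus of $I(\p(X_{\F_{q^m}}))$; since $\tau\colon I^{\KS}\to I(\p(X_{\F_{q^m}}))$ is surjective with central kernel $\G_m$ by Theorem \ref{Theorem:compare with Kisin's algebraic group}, the preimage $\widetilde{\mathcal{T}}:=\tau^{-1}(\mathcal{T})$ is a maximal torus of $I^{\KS}$.

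Now I apply the CM lifting theorem of \cite{Ito-Ito-Koshikawa}, namely \cite[Theorem 9.7]{Ito-Ito-Koshikawa} together with the Mukai--Buskin theorem \cite[Theorem 1.1]{Buskin}, exactly as in the proof of Theorem \ref{Theorem:compare with Kisin's algebraic group}(2) but now to the maximal torus $\widetilde{\mathcal{T}}$. This produces, after enlarging $K$, a finite extension $K$ of $W(\F_{q^m})[1/p]$ with residue field $k$ and a projective K3 surface $\mathcal{X}$ over $\mathcal{O}_K$ lifting $X_k$, such that for every embedding $K\hookrightarrow\C$ the action of $\widetilde{\mathcal{T}}(\Q)$ (via Buskin's theorem and $\tau_\ell$) is realized by elements of $\End(\p(\mathcal{X}_\C))$; these spread out to $\End(\p(\mathcal{X}))$ by Lemma \ref{Lemma:algebraically closed fields extension} and $(\ref{equation:endomorphism ring relative case})$. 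As $\widetilde{\mathcal{T}}(\Q)$ is Zariski dense, the sub-$\Q$-algebra of $\End(\p(\mathcal{X}))$ thereby produced has $\ell$-adic realization $\mathcal{T}$; its $T_E$-component gives a copy of $E$ inside $\End(\mathfrak{t}(\mathcal{X}))$ that the specialization map carries into $E\subset\End(\mathfrak{t}(X_{\F_{q^m}}))$. Since $\mathcal{X}_K$ has complex multiplication, $\End(\mathfrak{t}(\mathcal{X}))=\End(\mathfrak{t}(\mathcal{X}_K))$ is a number field of degree $\dim_\Q T(\mathcal{X}_K)_\Q=22-\rho(\mathcal{X}_{\overline{K}})$ containing $E$; because $[E:\Q]=22-\rho(X_{\overline{\F}_q})$ and the specialization $\Pic(\mathcal{X}_{\overline{K}})_\Q\hookrightarrow\Pic(X_{\overline{\F}_q})_\Q$ is injective, this forces $\rho(\mathcal{X}_{\overline{K}})=\rho(X_{\overline{\F}_q})$, hence $\End(\mathfrak{t}(\mathcal{X}))=E$ and the Picard specialization is an isomorphism. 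This gives (1), (2) and (3); the final assertion follows from Definition \ref{Definition:CM K3}, since then $\dim_\Q T(\mathcal{X}_K)_\Q=[E:\Q]$ and $\End(\mathfrak{t}(\mathcal{X}_K))\cong E$ is a CM field.

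The main obstacle is the last step. One must invoke the CM lifting theorem of \cite{Ito-Ito-Koshikawa} for the whole maximal torus $\widetilde{\mathcal{T}}$ --- not merely for a single semisimple element as in the proof of Theorem \ref{Theorem:compare with Kisin's algebraic group} --- use Buskin's theorem to turn the resulting rational Hodge isometries into genuine algebraic correspondences living in $\End(\p(\mathcal{X}))$, and verify that the choice of $\widetilde{\mathcal{T}}$ \emph{maximal} is exactly what prevents the Picard number from dropping under specialization, so that $\End(\mathfrak{t}(\mathcal{X}))$ is precisely $E$ rather than a proper CM overfield. The remaining bookkeeping --- the rank identities for $T_E$ and $I(\mathfrak{t}(X_{\F_{q^m}}))$, the degree identity $[E:\Q]=22-\rho$, and the injectivity of $\mathrm{sp}$ on Néron--Severi groups --- is routine.
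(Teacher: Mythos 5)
Your proof has a genuine gap in establishing property (2), that the Picard specialization is an isomorphism, and correspondingly property (3). You argue that $\End(\mathfrak{t}(\mathcal{X}))$ contains $E$, that $[\End(\mathfrak{t}(\mathcal{X})):\Q] = 22 - \rho(\mathcal{X}_{\overline{K}})$ and $[E:\Q] = 22 - \rho(X_{\overline{\F}_q})$, and that the Picard specialization is injective, claiming these force $\rho(\mathcal{X}_{\overline{K}}) = \rho(X_{\overline{\F}_q})$. But the two ingredients give inequalities in the \emph{same} direction: injectivity of Picard specialization gives $\rho(\mathcal{X}_{\overline{K}}) \leq \rho(X_{\overline{\F}_q})$, while $E \subseteq \End(\mathfrak{t}(\mathcal{X}))$ gives $22 - \rho(X_{\overline{\F}_q}) \leq 22 - \rho(\mathcal{X}_{\overline{K}})$, i.e.\ again $\rho(\mathcal{X}_{\overline{K}}) \leq \rho(X_{\overline{\F}_q})$. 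Nothing here rules out a strict drop of the Picard number. Moreover, the claimed inclusion $E \hookrightarrow \End(\mathfrak{t}(\mathcal{X}))$ is not available independently of the point at issue: the lifted endomorphisms $\gamma_t$ act on $T(\mathcal{X}_K)_\ell = (\text{extra algebraic classes}) \oplus T(X)_\ell$ as $(1,t)$, and if the extra summand were nonzero, the fact that $\End(\mathfrak{t}(\mathcal{X}))$ is a field (Proposition \ref{Proposition:transcendental motive is simple in characteristic 0}, Remark \ref{Remark:endomorphism ring relative case}) would force $\gamma_t - 1$, being non-invertible, to vanish; then the $\Q$-algebra your $\gamma_t$ generate is $\Q$, not a copy of $E$. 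Your ``the choice of $\widetilde{\mathcal{T}}$ maximal prevents the Picard number from dropping'' is exactly what needs proof, and the degree count does not supply it.

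The paper closes this gap differently and more directly: it chooses a single generator $\beta \in E$ with $\beta\iota(\beta) = 1$ and $E = \Q(\beta)$, lifts $(1,\beta)$ to $\tilde\eta \in \End(\p(\mathcal{X}))$ with restriction $\gamma \in \End(\mathfrak{t}(\mathcal{X}))$, and then argues that if the Picard specialization were not surjective, $\gamma_B - 1$ would be a non-injective endomorphism of the irreducible $\Q$-Hodge structure $T(\mathcal{X}_\C)_\Q$, hence zero, contradicting $\beta \neq 1$; from this, $\End(\mathfrak{t}(\mathcal{X}))$ is a number field containing $\Q(\beta) = E$ and the maximality of $E$ gives the isomorphism. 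Your elaborate maximal-torus construction ($T_E$, $\mathcal{T}$, $\widetilde{\mathcal{T}}$) is not wrong and the rank bookkeeping does check out, but the single-element argument is all that is needed and is cleaner; in particular, to repair your proof you would still have to import the Hodge-irreducibility (or ``$\End(\mathfrak{t}(\mathcal{X}))$ is a field'') argument to show the extras vanish, at which point the torus is superfluous.
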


\begin{proof}
We first make a few remarks on a maximal subfield $E$ of $\End(\mathfrak{t}(X))$ that is stable under $\iota$.
Notice that $E$ must contain the center $\Q[\Fr_X]$.
Since $\iota$ is a positive involution by Corollary \ref{Corollary:positive involution}, it follows that $E$ is a CM field and the restriction of $\iota$ to $E$ is the complex conjugation.
Moreover, 
$\dim_\Q E=22-\rho(X_{\overline{\F}_q})$
by Corollary \ref{Corollary:rank of motivic endomorphism ring}.
In particular,
the image of $E$ in $\End(\mathfrak{t}(X_{\F_{q^m}}))$ (denoted by the same symbol) is still a maximal subfield.

In order to prove the theorem, we may replace $\F_q$ by its finite extension.
Thus we may assume that $X$ admits a primitive ample line bundle $\mathscr{L}$
and we can attach Kisin's algebraic group $I^{\KS}$ to $(X, \mathscr{L})$.
Moreover, we may assume that the assertions of Theorem $\ref{Theorem:compare with Kisin's algebraic group}$ hold for $m=1$.

There exists an element $\beta \in E$ such that $\beta \iota(\beta)=1$ and $E=\Q(\beta)$; see for example the proof of \cite[Theorem 3.7]{Huybrechts}.
By Theorem \ref{Theorem:compare with Kisin's algebraic group} (3), we can find an element
$\eta \in I^{\KS}(\Q)$
with
\[
\tau(\eta)=(1, \beta) \in \SO_{\Frob_q}((\mathscr{L})^\perp)(\Q) \times I(\mathfrak{t}(X))(\Q)=I(\p(X))(\Q).
\]
Then, the proof of Theorem \ref{Theorem:compare with Kisin's algebraic group} (2) shows that
there exist a finite extension $K$ of $W(\overline{\F}_q)[1/p]$, a lifting $(\mathcal{X}, \mathcal{L})$ of $(X_{\overline{\F}_q}, \mathscr{L}_{\overline{\F}_q})$, and an element
$\tilde{\eta} \in \End(\p(\mathcal{X}))$ that specializes to
$(1, \beta)$ in $\End(\p(X_{\overline{\F}_q}))$.
As in the proof of \cite[Theorem 9.7]{Ito-Ito-Koshikawa},
we can show that $\mathcal{X}_K$ has CM by checking the commutativity of the Mumford--Tate group of $T(\mathcal{X}_{\C})_\Q$ for any embedding $K \hookrightarrow \C$.
This implies that
$
\Pic(\mathcal{X}_{\overline{K}})_\Q \overset{\sim}{\to} \Pic(\mathcal{X}_{\overline{\F}_q})_\Q;
$
see \cite[Theorem 1.1]{Ito-K20}.
Here we give a different proof of these results.

Let $\gamma \in \End(\mathfrak{t}(\mathcal{X}))$ denote the restriction of $\tilde{\eta}$ to $\mathfrak{t}(\mathcal{X})$.
We first prove that
$
\Pic(\mathcal{X}_{\overline{K}})_\Q \overset{\sim}{\to} \Pic(\mathcal{X}_{\overline{\F}_q})_\Q.
$
It suffices to show the surjectivity of the map.
If it is not surjective, 
then the Betti realization $\gamma_B-1$ of $\gamma-1$ is not injective since $(\gamma_B-1)\otimes_\Q \Q_\ell$ is zero on the orthogonal complement of
$
\Pic(\mathcal{X}_{\overline{K}})_{\Q_\ell} \hookrightarrow \Pic(\mathcal{X}_{\overline{\F}_q})_{\Q_\ell}.
$
This implies that $\gamma_B-1=0$ since $T(\mathcal{X}_\C)$ is an irreducible $\Q$-Hodge structure.
This contradicts $\beta \neq 1$.

We have proved that $\mathrm{sp}(\mathfrak{t}(\mathcal{X}))=\mathfrak{t}({\mathcal{X}_{\overline{\F}_q}})$.
By construction, we see that $\gamma$ specializes to $\beta$.
Since $\End(\mathfrak{t}(\mathcal{X}))$ is a number field and
$E=\Q(\beta) \subset \End(\mathfrak{t}({\mathcal{X}_{\overline{\F}_q}}))$
is a maximal subfield, we obtain
$
\End(\mathfrak{t}(\mathcal{X})) \overset{\sim}{\to} E \subset \End(\mathfrak{t}({\mathcal{X}_{\overline{\F}_q}})).
$
This, together with
$
\Pic(\mathcal{X}_{\overline{K}})_\Q \overset{\sim}{\to} \Pic(\mathcal{X}_{\overline{\F}_q})_\Q
$, implies that $\mathcal{X}_K$ has CM by $E$.

Finally, as $\mathcal{X}_K$ has CM,
it follows from Remark \ref{Remark:CM K3} (3) and Lemma \ref{Lemma:algebraically closed fields extension} that (after enlarging $K$ if necessary)
the polarized K3 surface $(\mathcal{X}, \mathcal{L})$ has a model over $\mathcal{O}_L$ for some finite extension $L$ of $W(\F_q)[1/p]$ (contained in $K$) and 
the conditions of the theorem are satisfied for this model; see also \cite[Corollary 9.10]{Ito-Ito-Koshikawa}.
\end{proof}

\section{The Hodge standard conjecture for the square of a K3 surface}\label{Section:The Hodge standard conjecture for the square of a K3 surface}

In this section, we prove Theorem \ref{Theorem:Hodge standard for squares, intro}.
Let $X$ be a K3 surface over a field $k$.
The Hodge standard conjecture in characteristic $0$ holds true.
So we may assume that $k$ is of characteristic $p>0$.
By spreading out and specialization arguments, we may further assume that $k=\F_q$.

It is already known that the pairing $\langle -, - \rangle_n$ defined in Conjecture \ref{Conjecture:Hodge standard} is positive definite for $n=0, 1$; the case $n=1$ can be reduced to the Hodge index theorem for surfaces.
We shall show that $\langle -, - \rangle_2$ is positive definite.
Corollary \ref{Corollary:Conjecture D for squares over finite fields} allows us to work with algebraic cycles modulo numerical equivalence.
Since $X^2$ is four-dimensional,
it suffices to prove (after enlarging $\F_q$) that the signature of the intersection pairing
\[
\langle -, - \rangle \colon A^2(X^2) \times A^2(X^2) \to \Q, \quad (\alpha, \beta) \mapsto \langle \alpha, \beta \rangle
\]
is $(\rho_2-\rho_1 +1, \rho_1 -1)$
by \cite[Proposition 3.15]{Ancona}, where $\rho_n:= \dim_\Q A^n(X^2)$.

By \cite[Proposition 1.3.6]{Kleiman68}, the above intersection pairing can be computed as
\begin{equation}\label{equation:trace formula}
    \langle \alpha, \beta \rangle = \Tr_0({^t}\beta \circ \alpha) + \Tr_2({^t}\beta \circ \alpha) + \Tr_4({^t}\beta \circ \alpha), 
\end{equation}
where $\Tr_i({^t}\beta \circ \alpha)$ denotes the trace of the map $H^i(X, \Q_\ell) \to H^i(X, \Q_\ell)$ induced by ${^t}\beta \circ \alpha$.
Let $M$ denote the two-dimensional subspace of $A^2(X^2)$ generated by $\pi_0$ and $\pi_4$.
We obtain the following orthogonal decomposition
\[
A^2(X^2)= M \oplus \pi_{\alg} A^2(X^2) \pi_{\alg} \oplus \pi_{\tr} A^2(X^2) \pi_{\tr}.
\]
The signature of $\langle -, - \rangle$ on $M$ is $(1, 1)$.
After enlarging $\F_q$,
we have
\[
\pi_{\alg} A^2(X^2) \pi_{\alg} \cong \Pic(X_{\overline{\F}_q})_\Q \otimes_\Q \Pic(X_{\overline{\F}_q})_\Q
\]
and we can check that
the signature of $\langle -, - \rangle$
on $\pi_{\alg} A^2(X^2) \pi_{\alg}$ is $(\rho^2-\rho_1 +2, \rho_1-2)$
by using the Hodge index theorem, where
$\rho:=\dim_\Q \Pic(X_{\overline{\F}_q})_\Q=\rho_1/2$
is the Picard number of $X_{\overline{\F}_q}$.
Consequently, it suffices to show that
$\langle -, - \rangle$ is positive definite
on $\pi_{\tr} A^2(X^2) \pi_{\tr}= \End(\mathfrak{t}(X))$.
(In order to reach this conclusion, one can also use Lemma \ref{Lemma:reduction to transcendental} in the next section instead of \cite[Proposition 3.15]{Ancona}.)

Let $f \in \End(\mathfrak{t}(X))$ be a nonzero element.
We need to show that $\langle f, f \rangle >0$.
By $(\ref{equation:trace formula})$, we have
$\langle f, f \rangle = \Tr_2(f \iota(f))$.
We claim that
\begin{equation}\label{equation:trace equality}
    \Tr_{\End(\mathfrak{t}(X))/\Q}(f \iota(f)) = e \Tr_2(f \iota(f)),
\end{equation}
where $e:=e(X)$ (see Definition \ref{Definition:d, e, h}).
Let $T(X)_\ell \otimes_{\Q_\ell} \overline{\Q}_\ell=\oplus_i T_i$
be the decomposition into the eigenspaces $T_i$ of $\Frob_q$.
Then we obtain
\[
\End(\mathfrak{t}(X)) \otimes_\Q \overline{\Q}_\ell \overset{(\ref{equation:endomorphism ring realization})}{=} \End_{\Frob_q}(T(X)_\ell \otimes_{\Q_\ell} \overline{\Q}_\ell) = \prod_i \End_{\overline{\Q}_\ell}(T_i).
\]
Since each $T_i$ is of dimension $e$, the desired equality $(\ref{equation:trace equality})$ follows from the fact that $e$ times the trace of a matrix $A \in M_e(\overline{\Q}_\ell)$ is equal to
$\Tr_{M_e(\overline{\Q}_\ell)/\overline{\Q}_\ell}(A)$.
Finally, the positivity $\langle f, f \rangle >0$ follows from Corollary \ref{Corollary:positive involution} and $(\ref{equation:trace equality})$.

\section{Self-products of neat K3 surfaces}\label{Section:Self-products of neat K3 surfaces}

In this section, we study a certain class of K3 surfaces over finite fields.
Let $X$ be a K3 surface over $\F_q$ and consider a condition that, for any $n$,
the Tate cycles on the $n$-fold self-product $X^n:=X \times \cdots \times X$ are spanned by pull-backs of Tate cycles on $X^2$.
Such a K3 surface is called a \textit{neat} K3 surface in this paper (see Definition \ref{Definition:neat K3} below).
For example, $X$ is neat if it is ordinary \cite{Zarhin93} or supersingular.
We show the Tate conjecture and the Hodge standard conjecture for arbitrary powers $X^n$ of a neat K3 surface $X$ by reduction to the case of $X^2$.
As another example, we prove that a K3 surface over $\F_q$ with (geometric) Picard number $\rho \geq 18$ is neat, and consequently, the Hodge standard conjecture holds true for arbitrary powers of a K3 surface over an algebraically closed field with Picard number $\rho \geq 17$.

\subsection{Neat K3 surfaces}\label{Subsection:Neat K3 surfaces}

Let $X$ be a K3 surface over $\F_q$.
We assume for the moment that $X$ is of finite height.
We write $d:=d(X)$ (see Definition \ref{Definition:d, e, h}).
Let $\alpha_1, \dotsc, \alpha_{2d}$ denote the \textit{distinct} eigenvalues of $\Frob_q$ acting on $T(X)_\ell$.
In other words, they are the roots of the irreducible polynomial $Q(T)$ in Proposition \ref{Proposition:L-function of K3}.
We may and do assume that
$
\alpha_{i+d}=\alpha^{-1}_i
$
for every $1 \leq i \leq d$.
Let $\Gamma(X)$ denote the multiplicative subgroup of $\overline{\Q}^\times$ generated by $\alpha_1, \dotsc, \alpha_{d}$, so it is a finitely generated abelian group with
\[
1 \leq \rank \Gamma(X) \leq d(X).
\]
If $X$ is supersingular, we let $\Gamma(X)$ denote the trivial group and $d(X):=0$.

Following the work of Zarhin \cite{Zarhin94, Zarhin15} on abelian varieties over finite fields, we introduce the following definition.
Let us remark that
$\rank \Gamma(X_{\F_{q^m}})=\rank \Gamma(X)$ and $d(X_{\F_{q^m}}) \leq d(X)$ for a finite extension $\F_{q^m}$ of $\F_q$.

\begin{defn}\label{Definition:neat K3}
We say that a K3 surface $X$ over $\F_q$ is \textit{neat} if
$\rank \Gamma(X_{\F_{q^m}})=d(X_{\F_{q^m}})$ for some finite extension $\F_{q^m}$ of $\F_q$.
\end{defn}

\begin{rem}\label{Remark:neat K3}
\begin{enumerate}
\item Assume that $X$ is of finite height. With the above notation, the following are equivalent.
\begin{enumerate}
    \item The equality $\rank \Gamma(X)=d(X)$ holds.
    \item The natural surjection
    $\alpha^\Z_1 \cdots \alpha^\Z_d \to \Gamma(X)$ is an isomorphism, where $\alpha^\Z_1 \cdots \alpha^\Z_d$ denotes the free abelian group with the basis $\alpha_1, \dotsc, \alpha_d$.
    \item For any function $f \colon \{ \alpha_1, \dotsc, \alpha_{2d} \} \to \Z$ that satisfies $\prod_{1 \leq i \leq 2d} \alpha^{f(\alpha_i)}_i=1$, the equality $f(\alpha_i)=f(\alpha_{i+d})$ holds for every $1 \leq i \leq d$.
\end{enumerate}
\item Let $\F_{q^m}$ be a finite extension of $\F_q$.
A K3 surface $X$ over $\F_q$ is neat if and only if $X_{\F_{q^m}}$ is neat.
\end{enumerate}
\end{rem}

Zarhin proved that every ordinary K3 surface $X$ (i.e.\ a K3 surface of height $1$) over $\F_q$ is neat in \cite{Zarhin93}.
He later used the neatness to prove the Tate conjecture for arbitrary powers $X^n$ of an ordinary $X$ by reducing it to the case of $X^2$ \cite{Zarhin96}.
In fact, the same argument applies to any neat K3 surface thanks to the Tate conjecture for $X^2$ (Theorem \ref{Theorem:Tate conjecture for square}), as explained in \cite[Theorem 5.4 (a)]{Milne19}.
(The condition that $\Fr_X$ is regular in the sense of \cite{Milne19} is equivalent to the requirement that $\rank \Gamma(X)=d(X)$; see \cite[Aside 2.7]{Milne19}.)
Let us briefly recall the argument.

\begin{prop}[\cite{Milne19}]\label{Proposition:Tate conjecture for neat K3}
Let $X$ be a neat K3 surface over $\F_q$ and $n$ a positive integer.
The Tate conjecture for $X^n$ holds true. Moreover, numerical equivalence coincides with $\ell$-adic homological equivalence for algebraic cycles on $X^n$.
The same statements hold for the crystalline cohomology.
\end{prop}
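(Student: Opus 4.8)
First I would dispose of the supersingular case separately: there $\mathfrak{t}(X)=0$, so $H^2(X,\Q_\ell)$ is spanned by algebraic classes, and since $H^1(X)=H^3(X)=0$ the K\"unneth formula shows that every even cohomology group of $X^n$ is spanned by cup products of pullbacks of divisor classes, the fundamental class, and the class of a point; the Tate conjecture and the equality of numerical and homological equivalence for $X^n$ then follow at once, in both the $\ell$-adic and crystalline settings. So from now on assume $X$ is of finite height. Since both assertions are insensitive to a finite base change of $\F_q$ (one descends cycles by averaging over the relevant Galois group, using $\Q$-coefficients), after replacing $\F_q$ by a finite extension I may assume that $\Frob_q$ acts trivially on $\Pic(X_{\overline{\F}_q})_{\Q_\ell}$ and that $\rank\Gamma(X)=d(X)$, which is the content of neatness by Remark \ref{Remark:neat K3}; the Tate conjecture for $X^2$ in both settings is available by Theorem \ref{Theorem:Tate conjecture for square}.

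Next I would make the cohomology of $X^n$ explicit. By the K\"unneth formula, $H^{2r}(X^n,\Q_\ell)(r)$ is the direct sum, over tuples $(m_1,\dots,m_n)$ with $m_i\in\{0,2,4\}$ and $\sum_i m_i=2r$, of $\bigotimes_{i=1}^n H^{m_i}(X,\Q_\ell)(m_i/2)$. Here $\Frob_q$ acts trivially on $H^0(X,\Q_\ell)$ and on $H^4(X,\Q_\ell)(2)$, while by $(\ref{equation:orthogonal decomposition})$ one has $H^2(X,\Q_\ell)(1)=\Pic(X_{\overline{\F}_q})_{\Q_\ell}\oplus T(X)_\ell$, with $\Frob_q$ trivial on the first summand (after our base change) and with distinct eigenvalues $\alpha_1,\dots,\alpha_{2d}$ on the second. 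A $\Frob_q$-eigenvector in such a K\"unneth component is therefore a tensor product of eigenvectors, and being a Tate class amounts precisely to the relation $\prod_{i\in S}\alpha_{j(i)}=1$, where $S$ is the set of indices $i$ with $m_i=2$ for which the chosen eigenvector lies in $T(X)_\ell$ and $\alpha_{j(i)}$ denotes the corresponding eigenvalue.

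The heart of the argument is a pairing step. Applying the characterization of neatness in Remark \ref{Remark:neat K3}\,(1)(c) to the function that counts the multiplicity of each $\alpha$ among $\{\alpha_{j(i)}\}_{i\in S}$, I conclude that $\alpha$ and $\alpha^{-1}$ occur with equal multiplicity; hence $S$ breaks up into pairs $\{i,i'\}$ with $\alpha_{j(i)}\alpha_{j(i')}=1$. For such a pair, the corresponding eigenvector lies in the $\Frob_q$-invariant part of the tensor product of the $i$-th and $i'$-th copies of $T(X)_\ell$, which sits inside $H^4(X^2,\Q_\ell)(2)$ for those two factors; so it is a Tate class on that copy of $X^2$, hence algebraic by Theorem \ref{Theorem:Tate conjecture for square}. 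The remaining tensor factors --- those indexed by $H^0(X)$, $H^4(X)$, or $\Pic(X_{\overline{\F}_q})_{\Q_\ell}$ --- are manifestly spanned by classes of algebraic cycles. Cupping together the pullbacks of all these cycles along the various projections $X^n\to X^2$ and $X^n\to X$ realizes the given Tate class by an algebraic cycle. A routine bookkeeping over $\overline{\Q}_\ell$ --- the $\overline{\Q}_\ell$-span of the Tate classes is then contained in the $\overline{\Q}_\ell$-span of the $\ell$-adic cycle classes, forcing equality of $\Q_\ell$-dimensions --- yields the Tate conjecture for $X^n$. Finally, numerical and $\ell$-adic homological equivalence coincide on $X^n$ by the Tate conjecture together with the semisimplicity of $\Frob_q$ on $H^*(X^n,\Q_\ell)$, which follows from its semisimplicity on $H^2(X,\Q_\ell)$ via the K\"unneth formula, exactly as in Corollary \ref{Corollary:Conjecture D for squares over finite fields} (cf.\ \cite[Theorem 2.9]{Tate94}); the crystalline statements follow by the identical argument, the crystalline Tate conjecture for $X^2$ being available and neatness involving only the $\alpha_i$, which are the roots of the polynomial $Q(T)\in\Q[T]$ common to both cohomology theories. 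I expect the main obstacle to be this pairing step: one must verify that neatness genuinely forces every Tate monomial on the transcendental factors to match eigenvalues with their inverses, so that it descends to honest Tate classes on the various copies of $X^2$, and that the passage between $\overline{\Q}_\ell$-eigenspace decompositions and $\Q_\ell$-rational classes is carried out correctly; the rest is formal.
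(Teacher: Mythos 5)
Your proof is correct and follows essentially the same route as the paper: reduce to the finite-height case with $\Frob_q$ trivial on $\Pic$ and $\rank\Gamma(X)=d(X)$, then show that every $\Frob_q$-invariant K\"unneth monomial decomposes into Tate classes pulled back from copies of $X^2$ (the combinatorial pairing step), and conclude via Theorem~\ref{Theorem:Tate conjecture for square} and semisimplicity of Frobenius. The only difference is that the paper defers this pairing argument to Zarhin's work \cite{Zarhin96} (via \cite{Milne19}), whereas you have written it out; your bookkeeping over $\overline{\Q}_\ell$ and the verification that no $\alpha_j$ equals $\pm 1$ (needed for the pairing to be well-defined) are exactly the points that need care, and they check out.
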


\begin{proof}
Since we already know that the Tate conjecture for both $X$ and $X^2$ holds true, we may assume that $n \geq 3$.
After enlarging $\F_q$, we may further assume that $\Frob_q$ acts trivially on $\Pic(X_{\overline{\F}_q})$ and $\rank \Gamma(X)=d(X)$.
Then we can show as in \cite{Zarhin96} that every $\Frob_q$-invariant element in $H^{2i}(X^n, \Q_\ell)(i)$ is a linear combination of cup products of pull-backs of $\Frob_q$-invariant elements in $H^{2j}(X^2, \Q_\ell)(j)$ ($0 \leq j \leq 4$) via projection maps $X^n \to X^2$.
In particular, the Tate conjecture for $X^n$ also holds true.
This, together with the semisimplicity of Frobenius, implies the remaining assertions by the same argument as in Corollary \ref{Corollary:Conjecture D for squares over finite fields}.
\end{proof}

\begin{rem}
Let $X$ be a (not necessarily neat) K3 surface over $\F_q$. 
According to \cite[Theorem 5.4 (b)]{Milne19}, there exist infinitely many $\ell$ such that numerical equivalence coincides with $\ell$-adic homological equivalence for algebraic cycles on $X^n, n \geq 1$. This result is an analogue of the result of Clozel \cite{Clozel} for abelian varieties over finite fields, and relies on Theorem \ref{Theorem:Tate conjecture for square}. 
\end{rem}

\subsection{The Hodge standard conjecture for self-products of neat K3 surfaces}\label{Subsection:The Hodge standard conjecture for self-products of neat K3 surfaces}

In this subsection, we first explain that, in order to prove the Hodge standard conjecture for powers of a K3 surface over a finite field, it suffices to establish the positivity of certain pairings on the tensor powers of the transcendental motive.
Then, we deduce this positivity for powers of a neat K3 surface from Theorem \ref{Theorem:Hodge standard for squares, intro}.

Let $k$ be a finite field $\F_q$ or $\overline{\F}_q$ and let $X$ be a K3 surface over $k$.
We fix a prime number $\ell$ invertible on $X$.
For the moment, we work with $\ell$-adic homological motives over $k$.
The argument in Section \ref{Subsection:The transcendental motive} also applies to $\ell$-adic homological motives, so we can define the
$\ell$-adic homological transcendental motive $\mathfrak{t}(X) \in \mathscr{M}_{\hom}(k)$, denoted by the same notation.
Let $\mathfrak{t}(X)^{\otimes i}$ denote 
the $i$-th tensor power of $\mathfrak{t}(X)$.
We consider the $\Q$-vector space
$
\Hom_{\mathscr{M}_{\hom}(k)}(\mathbbm{1}, \mathfrak{t}(X)^{\otimes i})
$
and equip it with the pairing $\langle -, - \rangle^{\otimes i}$ induced from the natural (cup product) pairing
$
\langle -, - \rangle \colon \mathfrak{t}(X) \otimes \mathfrak{t}(X) \to \mathbbm{1}.
$

\begin{lem}\label{Lemma:reduction to transcendental}
Let $X$ be a K3 surface over $k$ and $n$ a positive integer.
We fix a prime number $\ell$ invertible on $X$.
With the above notation, the following are equivalent.
\begin{enumerate}
    \item Let $\mathscr{L}$ be an ample line bundle on $X^n$. Conjecture \ref{Conjecture:Hodge standard} holds true for $X^n$, $\mathscr{L}$, and $\ell$.
    \item For every $i \leq n$,
    the pairing $\langle -, - \rangle^{\otimes i}$ on
    $
    \Hom_{\mathscr{M}_{\hom}(k)}(\mathbbm{1}, \mathfrak{t}(X)^{\otimes i})
    $
    is positive definite if $i$ is even and is negative definite if $i$ is odd.
\end{enumerate}
In particular, the validity of Conjecture \ref{Conjecture:Hodge standard} for $X^n$ does not depend on the choice of the ample line bundle $\mathscr{L}$.
\end{lem}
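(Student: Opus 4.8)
The plan is to work throughout with $\ell$-adic homological motives over $k$ and, after a transfer argument, to reduce to $k=\overline{\F}_q$. The starting point is the Künneth decomposition $\mathfrak h(X^n)=\mathfrak h(X)^{\otimes n}$ together with $\mathfrak h(X)=\mathbbm 1\oplus\bigl(\bigoplus_j\mathbbm 1(-1)\bigr)\oplus\mathfrak t(X)(-1)\oplus\mathbbm 1(-2)$, where the middle sum runs over an orthogonal basis of $\Pic(X_{\overline{\F}_q})_\Q$ and realizes the splitting $H^2(X,\Q_\ell(1))=\Pic(X)_{\Q_\ell}\oplus T(X)_\ell$. Expanding the $n$-fold tensor power, $\mathfrak h(X^n)$ becomes an orthogonal direct sum of ``algebraic'' summands (no tensor factor equal to $\mathfrak t(X)(-1)$) and summands isomorphic, up to a Tate twist, to $\mathfrak t(X)^{\otimes r}$ with $1\le r\le n$; correspondingly $A^\bullet(X^n)$ splits orthogonally, for the cup product, into an algebraic part and transcendental $\mathfrak t(X)^{\otimes r}$-parts.

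First I would record a feature special to K3 surfaces which lets me dispense with any clever choice of polarization: since $X$ has trivial Albanese, $\Pic(X^n)=\bigoplus_k p_k^*\Pic(X)$, so every ample line bundle on $X^n$ has the form $\mathscr L=\sum_k p_k^*M_k$ with each $M_k$ ample on $X$; and since $\mathfrak t(X)\perp\Pic(X)$, the class $M_k$ annihilates the transcendental motive of the $k$-th factor. Hence the Lefschetz operator $L_{\mathscr L}=\sum_k p_k^*L_{M_k}$ preserves the orthogonal decomposition above, and likewise on every $X^m$ with $m\le n$. Using hard Lefschetz (Deligne) and the Lefschetz standard conjecture $B$ for products of surfaces \cite{Kleiman68}, I would invoke the Lefschetz (primitive) decomposition to realize $A^{i,\prim}(X^n)$ as an orthogonal direct summand of $A^i(X^n)$ compatibly with the motivic decomposition. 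Then Conjecture \ref{Conjecture:Hodge standard} for $(X^n,\mathscr L)$ becomes the positivity, for every $i\le n$, of the form $(-1)^i\,\alpha\cdot\beta\cdot\mathscr L^{2n-2i}$ on each summand of $A^{i,\prim}(X^n)$.

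Next I would dispose of the algebraic summands using known results. Over $\R$ the pointed quadratic space $(\Pic(X)_\Q,M_k)$ is isometric to $(\Pic(V)_\Q,\text{an ample class})$ for $V$ an iterated blow-up of $\P^2$ with the same Picard number, so the algebraic part of $A^\bullet(X^m)$, with its Lefschetz action and intersection form, is realized inside the analogous structure for a product of such blow-ups; since that product is an iterated blow-up of $(\P^2)^m$ along smooth centres, and Conjecture \ref{Conjecture:Hodge standard} holds for products of projective spaces and is stable under such blow-ups \cite{Ito}, the required positivity on the algebraic part of $A^{i,\prim}(X^m)$ holds unconditionally. For a transcendental summand of $A^{i,\prim}(X^n)$---arising from a size-$r$ subset $S$ of the factors contributing $\mathfrak t(X)(-1)$ and a primitive algebraic class of codimension $i-r$ on $X^{S^c}$---I would use the compatibility of the Lefschetz $\mathfrak{sl}_2$-action with the Künneth decomposition (Clebsch--Gordan) to identify it with $\Hom_{\mathscr M_{\hom}(k)}(\mathbbm 1,\mathfrak t(X)^{\otimes r})\otimes W$, where $W$ is a primitive algebraic summand of $A^{i-r,\prim}(X^{S^c})$; unwinding $\alpha\cdot\beta\cdot\mathscr L^{2n-2i}$ then shows the form restricts there to $\langle-,-\rangle^{\otimes r}\otimes\psi_W$, with $\psi_W(w,w')=w\cdot w'\cdot(\sum_{k\in S^c}p_k^*M_k)^{2(n-r)-2(i-r)}$. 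By the algebraic case, $(-1)^{i-r}\psi_W$ is positive definite, so the Hodge-standard form on this summand equals $(-1)^i\langle-,-\rangle^{\otimes r}\otimes\psi_W=(-1)^r\,\langle-,-\rangle^{\otimes r}\otimes(\text{positive definite})$; hence it is positive definite precisely when $\langle-,-\rangle^{\otimes r}$ is positive definite for $r$ even and negative definite for $r$ odd. As $r$ ranges over $\{1,\dots,n\}$ (take $S=\{1,\dots,r\}$ with the remaining factors contributing fundamental classes), and since the cases $i=0,1$ of (2) are vacuous because $\Hom(\mathbbm 1,\mathfrak t(X))=0$ by construction, this would establish the equivalence of (1) and (2); as (2) makes no mention of $\mathscr L$, the independence of (1) of the ample line bundle follows.

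The hardest part will be carrying out the bookkeeping of the previous paragraph purely at the motivic level: identifying the primitive summands of $A^{i,\prim}(X^n)$, verifying the tensor-product shape of the Hodge-standard form on each, and---above all---tracking the sign cancellation $(-1)^i\cdot(-1)^{i-r}=(-1)^r$ that converts Hodge-standard positivity on a transcendental summand into the intrinsic $(-1)^r$-definiteness of $\langle-,-\rangle^{\otimes r}$. Over $\C$ this amounts to the classical compatibility of polarized Hodge--Lefschetz structures with tensor products; over a finite field one must reconstruct the necessary formalism from hard Lefschetz, the Lefschetz standard conjecture $B$ for products of surfaces, and the algebraic case settled above.
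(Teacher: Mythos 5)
Your proposal follows essentially the same skeleton as the paper's proof: reduce to $k=\overline{\F}_q$, use the motivic decomposition $\mathfrak h(X)=\mathbbm 1\oplus(\text{algebraic})\oplus\mathfrak t(X)(-1)\oplus\mathbbm 1(-2)$, observe that every ample $\mathscr L$ on $X^n$ is $\sum_m p_m^*\mathscr L_m$, and then track signs through the Künneth and primitive decompositions using the Lefschetz standard conjecture $B$. The $(-1)^i\cdot(-1)^{i-r}=(-1)^r$ sign bookkeeping matches the paper's translation from the André quadratic form $q^i_{\mathscr L}$ to the condition on $(-1)^i\langle-,-\rangle^{\otimes i}$.

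Two places diverge. First, for the purely algebraic summands you detour through an isometry of $(\Pic(X)_\Q,M_k)$ onto the Picard lattice of an iterated blow-up of $\P^2$ and then invoke stability of Conjecture \ref{Conjecture:Hodge standard} under blow-ups from \cite{Ito}. This is more machinery than is needed, and as sketched it has a gap: the isometry you construct is only a statement about the abstract quadratic space $(\Pic,\text{Lefschetz class})$, and one must still argue that the André quadratic form $q^{i}_{\mathscr L}$, which is built from the intersection form \emph{and} the Lefschetz decomposition, is preserved after embedding the relevant tensor summand into $A^{\bullet}(V^m)$ for the blow-up $V$; the Hodge standard conjecture for $V^m$ alone does not immediately give you that the form on the specific subspace generated by Picard tensors is the restriction of the $V^m$-form. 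The paper avoids all of this: after the Künneth reduction the only inputs about the algebraic part are the forms $q^0_{\mathscr L_m}$, $q^1_{\mathscr L_m}$, $q^2_{\mathscr L_m}$ on $A^0_\ell(X)$, $A^1_\ell(X)$, $A^2_\ell(X)$, and $q^1_{\mathscr L_m}$ is positive definite directly by the Hodge index theorem, with tensor products of positive definite forms being positive definite.

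Second, and more importantly, the step you flag in your last paragraph as ``the hardest part''---showing the André form $q^i_{\mathscr L}$ on $\mathfrak h^{2i}(X^n)(i)$ restricts as the tensor product $\otimes_m q^{i_m}_{\mathscr L_m}$ on each Künneth summand $\otimes_m\mathfrak h^{2i_m}(X)(i_m)$, and is zero across distinct summands---is exactly where the paper cites André's multiplicativity formula for $*_{\mathrm H}$ on a product of two varieties (\cite[Lemme 1.3.2]{Andre96}). Your sketch of a Clebsch--Gordan $\mathfrak{sl}_2$-argument points at the right phenomenon but does not supply it; the motivic involution $*_{\mathrm H,\mathscr L}$ must be shown to be multiplicative in the Künneth factors before the tensor-product shape of the form (and hence the sign computation) is justified. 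With that one ingredient inserted and the blow-up detour replaced by the Hodge index theorem, your argument becomes the paper's proof.
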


\begin{proof}
We may assume that $k=\overline{\F}_q$.
Since the Picard scheme of $X$ is \'etale over $k$,
it follows that every ample line bundle $\mathscr{L}$ on $X^n$ is of the form $\mathscr{L}=\sum_{1 \leq m \leq n} p^*_m \mathscr{L}_m$, where $p_m \colon X^n \to X$ is the $m$-th projection and $\mathscr{L}_m$ is an ample line bundle on $X$.

As the Lefschetz standard conjecture holds true for $X^n$ (see Remark \ref{Remark:Conjecture B}),
we have a decomposition of the $\ell$-adic homological motive $\mathfrak{h}^i(X^n)$ for any $i$
\[
\mathfrak{h}^i(X^n) = \bigoplus_{j \geq \max(i-2n, 0)} \mathfrak{p}^{i-2j}(X^n)(-j)
\]
whose $\ell$-adic realization gives the Lefschetz decomposition of
$H^i(X^n, \Q_\ell)$ with respect to $\mathscr{L}$.
Let
\[
*_{\mathrm{H}, \mathscr{L}} \colon \mathfrak{h}^i(X^n) \to \mathfrak{h}^{4n-i}(X^n)(2n-i)
\]
denote the involution given by multiplication by $(-1)^{(i-2j)(i-2j+1)/2}\frac{j!}{(2n-i+j)!}$ on each summand $\mathfrak{p}^{i-2j}(X^n)(-j)$.
The $\ell$-adic realization of $*_{\mathrm{H}, \mathscr{L}}$ coincides with the involution $*_{\mathrm{H}}$ (with respect to $\mathscr{L}$) defined in \cite[Section 1]{Andre96}.
We consider the following composition
\[
q^i_{\mathscr{L}} \colon \mathfrak{h}^{2i}(X^n)(i) \otimes \mathfrak{h}^{2i}(X^n)(i) \overset{\id \otimes *_{\mathrm{H}, \mathscr{L}}}{\longrightarrow} \mathfrak{h}^{2i}(X^n)(i) \otimes \mathfrak{h}^{4n-2i}(X^n)(2n-i) \to \mathbbm{1},
\]
where $\mathfrak{h}^{2i}(X^n)(i) \otimes \mathfrak{h}^{4n-2i}(X^n)(2n-i) \to \mathbbm{1}$ is the natural pairing.
This induces a quadratic form on
$
\Hom_{\mathscr{M}_{\hom}(k)}(\mathbbm{1}, \mathfrak{h}^{2i}(X^n)(i)),
$
denoted by the same notation $q^i_{\mathscr{L}}$.

It is well known that
Conjecture \ref{Conjecture:Hodge standard} for $X^n$, $\mathscr{L}$, and $\ell$ holds true if and only if $q^i_{\mathscr{L}}$ is positive definite on
$\Hom_{\mathscr{M}_{\hom}(k)}(\mathbbm{1}, \mathfrak{h}^{2i}(X^n)(i))$
for any $i$; see also \cite[Conjecture 5.3.2.1]{Andre04}.
Indeed, we can check this by using the fact that the Lefschetz decomposition
\[
\Hom_{\mathscr{M}_{\hom}(k)}(\mathbbm{1}, \mathfrak{h}^{2i}(X^n)(i))=A^i_\ell(X^n) = \bigoplus_{j \geq \max(2i-2n, 0)} A^{i-j, \prim}_\ell(X^n)
\]
is orthogonal with respect to $q^i_{\mathscr{L}}$ and the bilinear form associated with $q^i_{\mathscr{L}}$ is equal to the pairing 
$\langle -, - \rangle_{i-j}$ on $A^{i-j, \prim}_\ell(X^n)$ up to a positive multiplicative constant.

We have a decomposition
\[
\mathfrak{h}^{2i}(X^n)(i) = \bigoplus_{(i_1, \dotsc, i_n)} \bigotimes_{1 \leq m \leq n} \mathfrak{h}^{2i_m}(X)(i_m),
\]
where $(i_1, \dotsc, i_n) \in \Z^n_{\geq 0}$
runs over the set of $n$-tuples of nonnegative integers with
$\sum_{1 \leq m \leq n} i_m = i$.
Using a formula for $*_{\mathrm{H}}$ for the product of two varieties given in \cite[Lemme 1.3.2]{Andre96}, we see that
the restriction of $q^i_{\mathscr{L}}$ to
\[
(\otimes_{1 \leq m \leq n} \mathfrak{h}^{2i_m}(X)(i_m)) \otimes (\otimes_{1 \leq m \leq n} \mathfrak{h}^{2i'_m}(X)(i'_m))
\]
is
$\otimes_{1 \leq m \leq n} q^{i_m}_{\mathscr{L}_m}$ if
$(i_1, \dotsc, i_n)=(i'_1, \dotsc, i'_n)$ and is zero otherwise.

By the results in the previous two paragraphs,
we need only show that the quadratic form
on
\[
V(i_1, \dotsc, i_n):=\Hom_{\mathscr{M}_{\hom}(k)}(\mathbbm{1}, \otimes_{1 \leq m \leq n} \mathfrak{h}^{2i_m}(X)(i_m))
\]
induced by $\otimes_{1 \leq m \leq n} q^{i_m}_{\mathscr{L}_m}$ is positive definite for every $(i_1, \dotsc, i_n) \in \Z^n_{\geq 0}$
if and only if the pairing $(-1)^i\langle -, - \rangle^{\otimes i}$ on
$
\Hom_{\mathscr{M}_{\hom}(k)}(\mathbbm{1}, \mathfrak{t}(X)^{\otimes i})
$
is positive definite for every $i \leq n$.
Notice that the involution $*_{\mathrm{H}, \mathscr{L}_m}$ acts by multiplication by $-1$ on $\mathfrak{t}(X)$.
The quadratic space
$
V(i_1, \dotsc, i_n)
$
is the direct sum of quadratic spaces of the form
\[
A^0_\ell(X)^{\otimes j_1} \otimes_\Q A^1_\ell(X)^{\otimes j_2} \otimes_\Q \Hom_{\mathscr{M}_{\hom}(k)}(\mathbbm{1}, \mathfrak{t}(X)^{\otimes j_3}) \otimes_\Q A^2_\ell(X)^{\otimes j_4}
\]
with $\sum j_m = n$.
Here we equip each $A^i_\ell(X)$ with the quadratic form induced from $q^i_{\mathscr{L}_m}$ for some $\mathscr{L}_m$, which in particular is positive definite (for $i=1$ this is the Hodge index theorem), and equip $\Hom_{\mathscr{M}_{\hom}(k)}(\mathbbm{1}, \mathfrak{t}(X)^{\otimes j_3})$ with the pairing $(-1)^{j_3}\langle -, - \rangle^{\otimes j_3}$.
This description of
$
V(i_1, \dotsc, i_n)
$
suffices to conclude our proof.
\end{proof}

\begin{thm}\label{Theorem:Hodge standard conjecture for neat K3 surface}
Let $X$ be a neat K3 surface over $\F_q$.
Conjecture \ref{Conjecture:Hodge standard} holds true for an arbitrary power $X^n$ of $X$ and every ample line bundle on $X^n$.
\end{thm}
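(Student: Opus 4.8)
The plan is to deduce everything from Lemma~\ref{Lemma:reduction to transcendental} together with the already-established case of $X^{2}$. First I would note that we may freely enlarge $\F_{q}$: if $\F_{q^{m}}/\F_{q}$ is finite, then $A^{i,\prim}_{\ell}(X^{n}_{\F_{q}})$ is a subspace of $A^{i,\prim}_{\ell}(X^{n}_{\F_{q^{m}}})$ (same ambient geometric cohomology, same ample line bundle, same primitivity condition), and the pairing $\langle-,-\rangle_{i}$ is computed in the same cohomology, so positivity over $\F_{q^{m}}$ implies it over $\F_{q}$; moreover neatness is insensitive to finite extensions (Remark~\ref{Remark:neat K3}(2)). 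If $X$ is supersingular then $\mathfrak{t}(X)=0$ and there is nothing to prove, so I would assume $X$ has finite height and, after enlarging $\F_{q}$, that $\rank\Gamma(X)=d(X)=:d$. By Lemma~\ref{Lemma:reduction to transcendental} it then suffices to prove, for every $i\le n$, that the pairing $(-1)^{i}\langle-,-\rangle^{\otimes i}$ on $\Hom_{\mathscr{M}_{\hom}(\F_{q})}(\mathbbm{1},\mathfrak{t}(X)^{\otimes i})$ is positive definite.

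For $i$ odd this space vanishes, which makes the statement vacuous. Indeed the $\ell$-adic realization embeds $\Hom_{\mathscr{M}_{\hom}(\F_{q})}(\mathbbm{1},\mathfrak{t}(X)^{\otimes i})$ into $(T(X)_{\ell}^{\otimes i})^{\Frob_{q}}$, and a nonzero $\Frob_{q}$-invariant vector in $T(X)_{\ell}^{\otimes i}$ produces a relation $\alpha_{j_{1}}\cdots\alpha_{j_{i}}=1$ among $i$ of the eigenvalues; by the neatness condition (Remark~\ref{Remark:neat K3}(1)(c)) such a relation forces every $\alpha_{k}$ and its inverse $\alpha_{k+d}$ to occur with equal multiplicity, so $i$ is even, a contradiction.

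The substance is the case $i=2j$. Here I would first recall that Theorem~\ref{Theorem:Hodge standard for squares, intro}, read through Lemma~\ref{Lemma:reduction to transcendental} with $n=2$, says precisely that $\langle-,-\rangle^{\otimes 2}$ is positive definite on $\End(\mathfrak{t}(X))=\Hom(\mathbbm{1},\mathfrak{t}(X)^{\otimes 2})$; equivalently, $\iota$ is a positive involution of $\End(\mathfrak{t}(X))$ and $I(\mathfrak{t}(X))(\R)$ is compact (Corollary~\ref{Corollary:positive involution}, Lemma~\ref{Lemma:positive involution and compact unitary}), a positivity ultimately coming from the Rosati involution of the Kuga--Satake abelian variety. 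Using the perfect symmetric cup-product pairing $\langle-,-\rangle\colon\mathfrak{t}(X)\otimes\mathfrak{t}(X)\to\mathbbm{1}$, the motive $\mathfrak{t}(X)$ is self-dual of weight $0$ and ``even'', and one obtains a $\Q$-linear identification $\Hom(\mathbbm{1},\mathfrak{t}(X)^{\otimes 2j})\cong B_{j}:=\End_{\mathscr{M}_{\hom}(\F_{q})}(\mathfrak{t}(X)^{\otimes j})$ under which $\langle\phi,\psi\rangle^{\otimes 2j}$ becomes the trace form $(\phi,\psi)\mapsto\Tr\!\big(\tilde\phi\circ\iota_{j}(\tilde\psi)\big)$, where $\iota_{j}$ is the adjoint involution of $B_{j}$ with respect to $\langle-,-\rangle^{\otimes j}$ and $\Tr$ is the trace of the $\ell$-adic realization on $T(X)_{\ell}^{\otimes j}$ (no sign intervenes, since $\mathfrak{t}(X)$ is even). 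Since this realization is a faithful $B_{j}$-module containing every simple summand, one checks that $\Tr$ restricts on each $\Q$-simple factor of $B_{j}$ to a \emph{positive} multiple of the canonical trace form; hence the desired positivity reduces to the assertion that $\iota_{j}$ is a positive involution of $B_{j}$. Finally I would prove this last assertion by bootstrapping from the $j=1$ case: because $\rank\Gamma(X)=d$, the $\Frob_{q}$-eigenvalues on $T(X)_{\ell}^{\otimes j}$ and all their coincidences are governed by the free abelian group $\Gamma(X)$ of rank $d$ (this is the combinatorial mechanism behind Proposition~\ref{Proposition:Tate conjecture for neat K3}), which makes $B_{j}$ and $\iota_{j}$ explicit: each $\Q$-simple factor, together with $\iota_{j}$, is built out of the degree-$2$ datum $(\End(\mathfrak{t}(X)),\iota)$ (tensor powers of the ``balanced'' size-$2$ blocks, with a complex-conjugation/exchange twist on the remaining ones), so that positivity of $\iota$---equivalently compactness of $I(\mathfrak{t}(X))(\R)$---propagates to positivity of $\iota_{j}$.

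I expect this last step to be the main obstacle. For $j\ge 2$ the algebra $B_{j}=\End(\mathfrak{t}(X)^{\otimes j})$ is strictly larger than $\End(\mathfrak{t}(X))^{\otimes j}$ and is in general noncommutative (even when $\End(\mathfrak{t}(X))$ is a field), and the trace of the realization is no longer a scalar multiple of the algebra trace, so neither the shape of the pairing nor the positivity of $\iota_{j}$ is formal. It is precisely the neatness hypothesis that makes the semisimple algebra $B_{j}$ and its involution tractable enough---through the multiset combinatorics of Frobenius eigenvalues---to reduce the positivity all the way down to the already-known case of $X^{2}$.
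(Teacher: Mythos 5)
Your reductions are correct and mirror the paper's: enlarging $\F_q$, discarding the supersingular case, invoking Lemma~\ref{Lemma:reduction to transcendental} to land on the pairing $(-1)^{i}\langle-,-\rangle^{\otimes i}$ on $\Hom_{\mathscr{M}_{\hom}(\F_q)}(\mathbbm{1},\mathfrak{t}(X)^{\otimes i})$, and using neatness (Remark~\ref{Remark:neat K3}(1)(c)) to see that this space vanishes for odd $i$. But for even $i$ you yourself flag the key step as ``the main obstacle'' and do not close it: you reformulate the problem as the positivity of the adjoint involution $\iota_j$ on the algebra $B_j=\End(\mathfrak{t}(X)^{\otimes j})$, observe (correctly) that $B_j$ is strictly larger than $\End(\mathfrak{t}(X))^{\otimes j}$, noncommutative, and that its realization trace is not a single scalar multiple of the algebra trace, and then gesture at ``multiset combinatorics'' without actually establishing that $\iota_j$ is built from $\iota$ in a way that propagates positivity. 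That is precisely the part that would need a real argument, and as written the proof is incomplete.

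The paper sidesteps the algebra-with-involution formulation entirely. Working over $\C$ with coefficients in $\C$, it decomposes $\mathfrak{t}(X)_\C=\bigoplus_{i} M(\alpha_i)$ by Frobenius eigenvalues (using $\Q[\Fr_X]\otimes_\Q\C\cong\prod_i\C$), hence $\mathfrak{t}(X)^{\otimes 2n}_\C=\bigoplus_I M_I$, and isolates the summands $V_J=\Hom(\mathbbm{1}_\C,M_J)$ indexed by tuples $J$ that pair up as $(\alpha,\alpha^{-1})$ after permutation (neatness, $\rank\Gamma(X)=d(X)$, guarantees all other summands vanish). Complex conjugation swaps $V_J$ and $V_{\overline J}$, so $V_J\oplus V_{\overline J}$ descends to an $\R$-form $V_{[J]}$, giving an orthogonal decomposition of $\Hom(\mathbbm{1},\mathfrak{t}(X)^{\otimes 2n})\otimes_\Q\R$. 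Using Proposition~\ref{Proposition:Tate conjecture for neat K3} and permutation-invariance of the pairing, one then constructs an embedding of quadratic spaces $V_{[J]}\hookrightarrow\bigotimes_m V_{[(\alpha_{i_{2m-1}},\alpha_{i_{2m}})]}$, where each factor on the right is a piece of the $n=2$ space and is positive definite by Theorem~\ref{Theorem:Hodge standard for squares, intro}. Positivity of the tensor product is then immediate. This eigenvalue-indexed decomposition followed by a quadratic-space embedding is a genuinely different (and more elementary) mechanism than trying to control the involution algebra $B_j$, and it is exactly what fills the gap you identified.
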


\begin{proof}
By Lemma \ref{Lemma:reduction to transcendental}, it is enough to prove that, for every $n$, the pairing $(-1)^n\langle -, - \rangle^{\otimes n}$ on
$\Hom_{\mathscr{M}_{\hom}(\F_q)}(\mathbbm{1}, \mathfrak{t}(X)^{\otimes n})
$
is positive definite.
Since we have
\[
\Hom_{\mathscr{M}_{\hom}(\F_q)}(\mathbbm{1}, \mathfrak{t}(X)^{\otimes n})=
\Hom_{\mathscr{M}_{\num}(\F_q)}(\mathbbm{1}, \mathfrak{t}(X)^{\otimes n})
\]
by Proposition \ref{Proposition:Tate conjecture for neat K3}, we are allowed to work with numerical motives.
We may assume that $X$ is of finite height since otherwise $\mathfrak{t}(X)=0$.
After enlarging $\F_q$, we may further assume that $\rank \Gamma(X)=d(X)$.
Let $\alpha_1, \dotsc, \alpha_{2d} \in \overline{\Q}$ denote the distinct eigenvalues of $\Frob_q$ acting on $T(X)_\ell$ with
$
\alpha_{i+d}=\alpha^{-1}_i
$
for every $1 \leq i \leq d=d(X)$.

We first give an orthogonal decomposition of
$\Hom_{\mathscr{M}_{\num}(\F_q)}(\mathbbm{1}, \mathfrak{t}(X)^{\otimes n}) \otimes_\Q \R$ and then explain that the positivity of each summand can be deduced from that of
\[
\Hom_{\mathscr{M}_{\num}(\F_q)}(\mathbbm{1}, \mathfrak{t}(X)^{\otimes 2})\cong\End(\mathfrak{t}(X)),
\]
which is proved in Theorem \ref{Theorem:Hodge standard for squares, intro}.
Let
$\mathscr{M}_{\num}(\F_q, \C)$
denote the category of numerical motives over $\F_q$ with coefficients in $\C$ and the scalar extension of a numerical motive $M$ is denoted by $M_{\C}$.
The field $\Q[\Fr_X]$ is isomorphic to each $\Q(\alpha_i)$ so that we have the following isomorphism of $\C$-algebras
\[
\Q[\Fr_X] \otimes_\Q \C \cong \prod_{1 \leq i \leq 2d} \C, \quad \Fr_X \otimes 1 \mapsto (\alpha_1, \dotsc, \alpha_{2d}).
\]
This induces a natural decomposition of $\mathfrak{t}(X)_{\C}$ in $\mathscr{M}_{\num}(\F_q, \C)$
\[
\mathfrak{t}(X)_{\C} = \bigoplus_{1 \leq i \leq 2d} M({\alpha_i}).
\]
For an $n$-tuple $I=(\alpha_{i_1}, \dotsc, \alpha_{i_n})$ of the eigenvalues, we put
$
M_I:= \bigotimes_{1 \leq m \leq n} M(\alpha_{i_m})
$
and
$V_I:= \Hom_{\mathscr{M}_{\num}(\F_q, \C)}(\mathbbm{1}_{\C}, M_I)$.
We obtain a decomposition
$
\mathfrak{t}(X)^{\otimes n}_{\C} = \oplus_{I} M_I.
$
By the assumption that $\rank \Gamma(X)=d(X)$,
we see that
$\Hom_{\mathscr{M}_{\num}(\F_q)}(\mathbbm{1}, \mathfrak{t}(X)^{\otimes n}) \otimes_\Q \C=0$
if $n$ is odd.
Moreover, we can write
\begin{equation}\label{equation:orthogonal decomposition tensor power}
\Hom_{\mathscr{M}_{\num}(\F_q)}(\mathbbm{1}, \mathfrak{t}(X)^{\otimes 2n}) \otimes_\Q \C = \bigoplus_{J} V_J,
\end{equation}
where $J$ runs over the set of $2n$-tuples
$J=(\alpha_{i_1}, \dotsc, \alpha_{i_{2n}})$
of the eigenvalues such that, after permuting them, they satisfy $\alpha_{i_{2m}}=\alpha^{-1}_{i_{2m-1}}$ for every $1 \leq m \leq n$.
We let $\overline{J}:=(\alpha^{-1}_{i_1}, \dotsc, \alpha^{-1}_{i_{2n}})$.
Then, the action of the complex conjugation on the left hand side of $(\ref{equation:orthogonal decomposition tensor power})$ maps
$V_J$ to $V_{\overline{J}}$.
Therefore $V_J \oplus V_{\overline{J}}$ descends to an $\R$-vector subspace $V_{[J]}$ of
$\Hom_{\mathscr{M}_{\num}(\F_q)}(\mathbbm{1}, \mathfrak{t}(X)^{\otimes 2n}) \otimes_\Q \R$.
This induces an orthogonal decomposition
\[
\Hom_{\mathscr{M}_{\num}(\F_q)}(\mathbbm{1}, \mathfrak{t}(X)^{\otimes 2n}) \otimes_\Q \R = \bigoplus V_{[J]}.
\]

It suffices to prove that the pairing $\langle -, - \rangle^{\otimes 2n}$ is positive definite on $V_{[J]}$ for every $J=(\alpha_{i_1}, \dotsc, \alpha_{i_{2n}})$.
Since the pairing on $\Hom_{\mathscr{M}_{\num}(\F_q)}(\mathbbm{1}, \mathfrak{t}(X)^{\otimes 2n})$
is invariant under permutations of factors $\mathfrak{t}(X)$,
we may assume that $\alpha_{i_{2m}}=\alpha^{-1}_{i_{2m-1}}$ for every $1 \leq m \leq n$ after permuting $\alpha_{i_1}, \dotsc, \alpha_{i_{2n}}$.
By using Proposition \ref{Proposition:Tate conjecture for neat K3}, we obtain isomorphisms
\[
\bigotimes_{1 \leq m \leq n} V_{(\alpha_{i_{2m-1}}, \alpha_{i_{2m}})} \overset{\sim}{\to} V_J \quad \text{and} \quad \bigotimes_{1 \leq m \leq n} V_{(\alpha_{i_{2m}}, \alpha_{i_{2m-1}})} \overset{\sim}{\to} V_{\overline{J}}.
\]
It follows that there exists an embedding
$V_{[J]} \hookrightarrow \otimes_{1 \leq m \leq n} V_{[(\alpha_{i_{2m-1}}, \alpha_{i_{2m}})]}$
of quadratic spaces over $\R$.
By Theorem \ref{Theorem:Hodge standard for squares, intro}, we know that
the pairing $\langle -, - \rangle^{\otimes 2}$ is positive definite on
$V_{[(\alpha_{i_{2m-1}}, \alpha_{i_{2m}})]}$.
Therefore, the quadratic space
$\otimes_{1 \leq m \leq n} V_{[(\alpha_{i_{2m-1}}, \alpha_{i_{2m}})]}$,
and hence $V_{[J]}$, is positive definite.
The proof of Theorem \ref{Theorem:Hodge standard conjecture for neat K3 surface} is now complete.
\end{proof}

\subsection{Examples}\label{Subsection:Examples}

This subsection is devoted to discuss examples.
First, we give some examples of neat K3 surfaces:

\begin{prop}\label{Proposition:example of neat K3}
Let $X$ be a K3 surface over $\F_q$.
If one of the following conditions is satisfied after enlarging $\F_q$, then $X$ is neat.
\begin{enumerate}
    \item $X$ is of finite height and $h(X)=e(X)$ holds (e.g.\ if $X$ is ordinary).
    \item The inequality $d(X) \leq 2$ holds (e.g.\ if $\rho(X_{\overline{\F}_q}) \geq 18$).
\end{enumerate}
\end{prop}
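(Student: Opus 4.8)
The plan is to prove both parts via the criterion for neatness in Remark~\ref{Remark:neat K3}(1)(c): after replacing $\F_q$ by a finite extension, it suffices to show that the only multiplicative relations among the distinct eigenvalues $\alpha_1,\dots,\alpha_{2d}$ of $\Frob_q$ on $T(X)_\ell$ (labelled so that $\alpha_{i+d}=\alpha_i^{-1}$) are generated by $\alpha_i\alpha_{i+d}=1$. Given $f\colon\{\alpha_1,\dots,\alpha_{2d}\}\to\Z$ with $\prod_i\alpha_i^{f(\alpha_i)}=1$, subtracting integer multiples of the relations $\alpha_i\alpha_{i+d}=1$ reduces to the case $\min(f(\alpha_i),f(\alpha_{i+d}))=0$ for all $i$, and one must then deduce $f\equiv0$. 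The principal input is Proposition~\ref{Proposition:L-function of K3}: every $\nu_q(\alpha_i)$ lies in $\{-1/h,0,1/h\}$ with $h=h(X)$, exactly $h(X)/e(X)$ of the $\alpha_i$ have valuation $-1/h$, and these are the roots of the factor $Q_{<0}(T)$ of $Q(T)$, which is irreducible over $\Q_p$.

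For (1), after enlarging $\F_q$ we may assume $X$ has finite height and $h:=h(X)=e(X)$. Then $Q_{<0}(T)=1-\pi_0T$ is linear, so $\pi_0\in\Q_p$ is the unique eigenvalue of valuation $-1/h$, its inverse $\pi_0^{-1}$ is the unique eigenvalue of valuation $+1/h$, and the remaining $2d-2$ eigenvalues are $p$-adic units. Applying $\nu_q$ to a normalized relation $\prod_i\alpha_i^{f(\alpha_i)}=1$ forces $f(\pi_0)=f(\pi_0^{-1})$, hence both vanish; thus $f$ is supported on the $p$-adic-unit eigenvalues, which are exactly the eigenvalues of $\Frob_q$ on the slope-zero part of the crystalline realization $T(X)_{\cris}$. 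It then remains to prove that these unit eigenvalues satisfy no nontrivial multiplicative relation. This is exactly the situation treated by Zarhin in the ordinary case $h=e=1$ in \cite{Zarhin93}: the hypothesis $h=e$ is precisely what makes the non-unit part of $\Frob_q$ collapse to the single pair $\pi_0,\pi_0^{-1}$, so that Zarhin's analysis of the unit-root part of the crystal---and, more generally, the arguments of \cite{Zarhin94,Zarhin15} for abelian varieties---carries over. I expect this last step, controlling the multiplicative relations among the unit-root Frobenius eigenvalues, to be the main obstacle; the rest of (1) is formal.

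For (2), if $X$ is supersingular then $d(X)=0$ and $X$ is neat, and if $d(X)=1$ then $\Gamma(X)=\langle\alpha_1\rangle$ has rank $1=d(X)$ since $\alpha_1$ is not a root of unity. Now assume $d(X)=2$, so that $Q(T)$ is irreducible of degree $4$ with roots $\alpha_1,\alpha_2,\alpha_1^{-1},\alpha_2^{-1}$; if $\rank\Gamma(X)=2$ then $X$ is already neat, so suppose there is $(a,b)\ne(0,0)$ with $\alpha_1^a\alpha_2^b$ a root of unity. Choose $\sigma$ in the Galois group of the Galois closure of $\Q(\alpha_1)/\Q$ with $\sigma(\alpha_1)=\alpha_2$; since complex conjugation sends each root to its inverse and $\sigma$ permutes the four roots injectively, one checks that $\sigma(\alpha_2)\in\{\alpha_1,\alpha_1^{-1}\}$. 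If $\sigma(\alpha_2)=\alpha_1$, then applying $\sigma$ to the relation and multiplying and dividing the two resulting relations shows that $(\alpha_1\alpha_2)^{a+b}$ and $(\alpha_1\alpha_2^{-1})^{a-b}$ are roots of unity, so one of $\alpha_1\alpha_2,\alpha_1\alpha_2^{-1}$ is a root of unity. If $\sigma(\alpha_2)=\alpha_1^{-1}$, then applying $\sigma$ produces a second relation with exponent vector $(-b,a)$, and since the determinant $a^2+b^2$ is nonzero this makes $\alpha_1$ a root of unity, which is impossible. Hence some $\alpha_1\alpha_2^{\pm1}$ is a root of unity, so over a suitable extension $\F_{q^m}$ the distinct eigenvalues of $\Frob_{q^m}$ on $T(X)_\ell$ collapse to $\alpha_1^{m}$ and $\alpha_1^{-m}$, whence $d(X_{\F_{q^m}})=1=\rank\Gamma(X_{\F_{q^m}})$ and $X$ is neat. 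Finally, the parenthetical examples follow at once: $\rho(X_{\overline{\F}_q})\ge18$ forces $2d(X)e(X)=\dim_{\Q_\ell}T(X)_\ell=22-\rho(X_{\overline{\F}_q})\le4$, hence $d(X)\le2$; and if $X$ is ordinary then $h(X)=1$, and as the slope-$(-1)$ part of $T(X)_{\cris}$ has dimension $h(X)=e(X)\deg Q_{<0}(T)$ we get $e(X)=1=h(X)$.
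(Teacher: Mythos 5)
Your proof of part (2) is correct and takes a somewhat different route from the paper's, though both ultimately track Lemma~2.10 of [Zarhin94]. The paper observes that when $\Gamma(X)$ is torsion-free of rank $1$, the $\Gal(L/\Q)$-action on $\Gamma(X)\cong\Z$ yields an injection $\Gal(L/\Q)\hookrightarrow\{\pm 1\}$ (injective because $\Gamma(X)$ generates the splitting field $L$ of $Q$), forcing $\deg Q=2d(X)\le 2$ and hence $d(X)=1$. You instead fix $d(X)=2$, choose $\sigma\in\Gal(L/\Q)$ with $\sigma(\alpha_1)=\alpha_2$, and do a case analysis on $\sigma(\alpha_2)\in\{\alpha_1,\alpha_1^{-1}\}$; this is correct and gives the same conclusion, it is just longer and more computational than the paper's structural argument. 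The verification of the parenthetical examples is also fine.

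Part (1), by contrast, has a genuine gap. Your opening reductions are right: $h(X)=e(X)$ forces $\deg Q_{<0}=h/e=1$, so under the fixed embedding $\overline{\Q}\subset\overline{\Q}_p$ there is a unique eigenvalue $\pi_0$ of negative valuation, and applying $\nu_q$ to a normalized relation kills $f(\pi_0)$ and $f(\pi_0^{-1})$. But then you declare that ``controlling the multiplicative relations among the unit-root Frobenius eigenvalues'' is the main obstacle, and you supply no argument for it. That remaining step is precisely the content of the proposition, and the reduction you performed does not make it qualitatively easier. The paper resolves this by recognizing that $h(X)=e(X)$ makes $Q(T)$ a $p$-admissible polynomial of K3 type and invoking [Zarhin93, Theorem~2.4.3]. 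The mechanism behind that theorem---the idea you are missing---is to run your valuation argument not just for $\nu_q$ but for all its Galois conjugates $\nu_q\circ\sigma$, $\sigma\in\Gal(L/\Q)$. Since $Q$ is irreducible, $\Gal(L/\Q)$ acts transitively on the roots, so for every root $\alpha_j$ there is $\sigma$ with $\sigma(\alpha_j)=\pi_0$; the valuation $\nu_q\circ\sigma$ then singles out $\alpha_j$ as the unique root of negative value, and applying it to the normalized relation gives $f(\alpha_j)=f(\alpha_j^{-1})$, hence $f(\alpha_j)=f(\alpha_j^{-1})=0$ by the normalization $\min(f(\alpha_j),f(\alpha_j^{-1}))=0$. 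Ranging over all $j$ yields $f\equiv 0$. With this step incorporated your approach to (1) becomes a complete proof; as written it is not.
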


\begin{proof}
(1) A corresponding statement is shown in \cite{Zarhin93}.
Indeed, after enlarging $\F_q$, we may assume that $h(X)=e(X)$.
It follows that the irreducible polynomial $Q(T)$ is a (rational multiple of) $p$-admissible polynomial of K3 type in the sense of \cite[Definition 2.3]{Zarhin93}.
Thus, $\rank \Gamma(X)=d(X)$ holds by \cite[Theorem 2.4.3]{Zarhin93}.

(2) After enlarging $\F_q$, we may assume that $d(X)\leq 2$ and $\Gamma(X)$ is torsion-free.
We shall prove $\rank \Gamma(X)=d(X)$ in this case.
If $X$ is supersingular, then $\rank \Gamma(X)=d(X)=0$ by definition.
We may assume that $X$ is of finite height, so $1 \leq \rank \Gamma(X) \leq d(X) \leq 2$.
If $\rank \Gamma(X)=2$, then $\rank \Gamma(X)=d(X)=2$ holds trivially.
If $\rank \Gamma(X)=1$, we can prove $d(X)=1$ by the same argument as in \cite[Lemma 2.10]{Zarhin94}.
We include here a slightly modified version for the convenience of the reader.
The torsion-free group $\Gamma(X)$ is isomorphic to $\Z$ by assumption.
Let $L$ denote the splitting field of $Q(T)$.
The Galois group $\Gal(L/\Q)$ naturally acts on
$\Gamma(X)\subset L$ and it induces a homomorphism
\[
\Gal(L/\Q) \to \{ 1, -1 \} \cong \Aut(\Gamma(X)).
\]
This map is injective as $\Gamma (X)$ generates $L$, so we must have $d(X)=1$.
\end{proof}

Recall that the height $h$ of a K3 surface $X$ over an algebraically closed field of characteristic $p>0$ (not necessarily $\overline{\F}_q$) is also defined as the height of the associated formal Brauer group.
We say that $X$ is ordinary if $h=1$, and that $X$ is supersingular if $h=\infty$.

\begin{cor}\label{Corollary:Hodge standard for Picard number 17}
Conjecture \ref{Conjecture:Hodge standard} holds true for an arbitrary power $X^n$ of a K3 surface $X$ over an algebraically closed field of characteristic $p>0$ and every ample line bundle on $X^n$ if $X$ satisfies one of the following conditions:
\begin{enumerate}
    \item $X$ is ordinary.
    \item $X$ is supersingular.
    \item $\rho(X) \geq 17$.
\end{enumerate}
\end{cor}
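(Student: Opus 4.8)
The plan is to reduce the corollary, in each of the three cases, to the Hodge standard conjecture for the powers of a \emph{neat} K3 surface over a finite field, which is Theorem~\ref{Theorem:Hodge standard conjecture for neat K3 surface}; the three hypotheses will be used only to guarantee that a suitable specialization of $X$ is neat in the sense of Definition~\ref{Definition:neat K3}. By Lemma~\ref{Lemma:reduction to transcendental} the validity of Conjecture~\ref{Conjecture:Hodge standard} for $X^n$ is independent of the chosen ample line bundle, so it suffices to treat a single polarization. Moreover, since a supersingular K3 surface over an algebraically closed field of characteristic $p$ has Picard number $22$ by Remark~\ref{Remark:supersingular case}, case~(2) is a special case of case~(3), and only (1) and (3) have to be addressed.

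First I would carry out the usual spreading out and specialization argument, in the style of the reduction at the beginning of Section~\ref{Section:The Hodge standard conjecture for the square of a K3 surface}. Writing the algebraically closed base field as $k=\overline{k_0}$ with $k_0$ finitely generated over $\F_p$, and using that an algebraic cycle on $X^n$ over $k$ descends, after specialization of its field of definition, to a cycle over $\overline{k_0}$ with the same $\ell$-adic cohomology class, one reduces to the case where $X$ is the base change to $\overline{k_0}$ of a K3 surface over $k_0$. Choosing a model gives a projective K3 surface $\mathcal{X}$ over an integral $\F_p$-scheme $S$ of finite type with function field $k_0$, a relatively ample line bundle on $\mathcal{X}^n/S$, and, after shrinking $S$ and passing to a finite cover $S'\to S$ if necessary, models over $S'$ of finitely many algebraic cycles whose classes span $A^i_\ell(X^n)$ for all the relevant $i$. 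For any closed point $s$ of $S'$, whose residue field is then a finite field $\F_q$, the cospecialization isomorphism coming from smooth proper base change over a trait mapping to $S'$ through $s$ identifies $H^{*}_\et(X^n_{\overline{k}},\Q_\ell)$ with $H^{*}_\et((X_s)^n_{\overline{\F}_q},\Q_\ell)$ compatibly with cup products, with the first Chern class of the chosen polarization, and with the classes of the spread-out cycles and of their special fibres. It therefore induces an injection of quadratic spaces $A^{i,\prim}_\ell(X^n)\hookrightarrow A^{i,\prim}_\ell((X_s)^n_{\overline{\F}_q})$, and since a subspace of a positive definite quadratic space is positive definite, Conjecture~\ref{Conjecture:Hodge standard} for $(X_s)^n_{\overline{\F}_q}$ implies it for $X^n$. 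Finally, Conjecture~\ref{Conjecture:Hodge standard} for $(X_s)^n$ over $\overline{\F}_q$ follows from the same conjecture for $((X_s)_{\F_{q^m}})^n$ over $\F_{q^m}$ for $m$ large enough that all the cycles in question and the polarization are defined over $\F_{q^m}$, and $X_s$ is neat if and only if $(X_s)_{\F_{q^m}}$ is neat by Remark~\ref{Remark:neat K3}. So everything reduces to choosing the closed point $s$ so that $X_s$ is neat.

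This is where the hypotheses enter. If $X$ is ordinary, I would choose $s$ in the locus of $S$ over which $\mathcal{X}$ has height one; this locus is open and contains the generic point, hence is dense, hence meets the closed points of $S'$, and for such an $s$ the surface $X_s$ is ordinary, hence neat by Proposition~\ref{Proposition:example of neat K3}(1). If $\rho(X)\geq 17$, then for \emph{every} closed point $s$ of $S'$ the specialization map on N\'eron--Severi groups, which is injective because the first Chern class is injective on $\NS$ modulo torsion and $\NS$ of a K3 surface is torsion free, gives $\rho((X_s)_{\overline{\F}_q})\geq\rho(X)\geq 17$; since $2\,d(X_s)\leq\dim_{\Q_\ell}T(X_s)_\ell=22-\rho((X_s)_{\overline{\F}_q})\leq 5$ (using Proposition~\ref{Proposition:L-function of K3} and Definition~\ref{Definition:d, e, h} in the finite height case, and $d(X_s)=0$ in the supersingular case), we get $d(X_s)\leq 2$, so $X_s$ is neat by Proposition~\ref{Proposition:example of neat K3}(2). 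In either case Theorem~\ref{Theorem:Hodge standard conjecture for neat K3 surface} applies to $X_s$, which completes the argument.

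The step requiring the most care is the bookkeeping in the specialization: one has to choose a single closed point $s$ of $S'$ lying simultaneously in the locus of good reduction of $\mathcal{X}^n$, in the locus over which the chosen generating cycles and the relative polarization are defined, and in the locus imposing the relevant geometric condition (height one, or geometric Picard number at least $17$), and one has to keep track of the distinction between cycles defined over $\F_q$ and over $\overline{\F}_q$ when invoking Theorem~\ref{Theorem:Hodge standard conjecture for neat K3 surface} over the finite field. None of this is hard, but it must be arranged coherently; this is the only genuine obstacle.
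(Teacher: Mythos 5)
Your proposal is correct and follows essentially the same approach as the paper: reduce case (2) to case (3) via Remark \ref{Remark:supersingular case}, specialize to a neat K3 surface over a finite field using openness of the ordinary locus for (1) and non-decrease of the Picard number for (3), then apply Theorem \ref{Theorem:Hodge standard conjecture for neat K3 surface}. Your derivation of $d(X_s)\leq 2$ directly from $2d(X_s)\leq\dim T(X_s)_\ell\leq 5$ and integrality of $d$ is a harmless variant of the paper's parity argument ($\rho\geq 17$ even implies $\rho\geq 18$), and your more explicit spreading-out bookkeeping is also fine.
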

In fact, the condition (2) implies the condition (3) since we have $\rho(X)=22$ if $X$ is supersingular; see Remark \ref{Remark:supersingular case}.

\begin{proof}
In each case, the K3 surface $X$ and an ample line bundle $\mathscr{L}$ on $X^n$ specialize to a K3 surface $X_0$ over a finite field $\F_q$ and an ample line bundle $\mathscr{L}_0$ on $X^n_0$ such that $X_0$ satisfies one of the conditions in Proposition \ref{Proposition:example of neat K3}, and so that $X_0$ is neat.
In the case (1), this follows from the openness of the ordinary locus in a family of K3 surfaces; see \cite{Artin}.
In the case (3), this follows from the fact that the Picard number does not decrease under specialization and the geometric Picard number of a K3 surface defined over a finite field is even (hence $\rho((X_0)_{\overline{\F}_q}) \geq 17$ implies $\rho((X_0)_{\overline{\F}_q}) \geq 18$).
Conjecture \ref{Conjecture:Hodge standard} holds true for $X^n_0$ and $\mathscr{L}_0$ by Theorem \ref{Theorem:Hodge standard conjecture for neat K3 surface}, and it in turn implies that for $X^n$ and $\mathscr{L}$.
\end{proof}

\begin{ex}\label{Example:Kummer surface}
Let $A$ be an abelian surface over an algebraically closed field of characteristic $p \geq 3$ and $X$ the Kummer surface associated with $A$.
It is well known that $\rho(X)=\rho(A)+16 \geq 17$, where $\rho(A)$ is the Picard number of $A$, and so $\rho(X) \geq 18$ if $A$ is defined over a finite field $\F_q$.
Thus, the Hodge standard conjecture holds true for arbitrary powers $X^n$ by Corollary \ref{Corollary:Hodge standard for Picard number 17},
and $X$ is neat in the latter case by Proposition \ref{Proposition:example of neat K3}.

One may also deduce the neatness of $X$ from the fact that abelian surfaces over finite fields are neat, using a Galois-equivariant isomorphism
\begin{equation}\label{equation:Kummer surface transcendental}
T(A)_\ell \cong T(X)_\ell,
\end{equation}
where $T(A)_\ell \subset H^2_\et(A_{\overline{\F}_q}, \Q_\ell)(1)$ is the orthogonal complement of the N\'eron-Severi group of $A_{\overline{\F}_q}$.
See \cite[Definition 3.0]{Zarhin94} for the definition of a neat abelian variety and \cite[Theorem 3.5]{Zarhin94} for the proof of the neatness of abelian surfaces.
(Actually, the discussion given in Proposition \ref{Proposition:example of neat K3} (2) is almost the same as the proof of this fact.)
Furthermore, it is plausible that the Hodge standard conjecture for $X^2$ (and $X^n$) follows from the known case of $A^2$.  
\end{ex}

Next, we shall give an example of a nonneat K3 surface.
For this, we use a weak analogue of the Honda--Tate theory studied in \cite{Taelman, Ito-K19}, which we recall briefly below.

For a polynomial $L(T)= \prod_i (1-\alpha_i T) \in 1 + T\Q[T]$ and a positive integer $N$, let
$L(T)^{(N)}$ denote the polynomial
$
\prod_i (1-\alpha^N_i T) \in 1 + T\Q[T].
$
Taelman \cite{Taelman} conjectured that, if the degree of $L(T)$ is less than or equal to $20$ and $L(T)$ satisfies the conditions of Proposition \ref{Proposition:L-function of K3} for a power $q$ of $p$, then there exist a positive integer $N$ and a K3 surface $X$ over $\F_{q^N}$ such that
\[
L_{\mathrm{trc}}(X, T)=\det(1-\Frob_q^N T \vert T(X)_{\ell})=L(T)^{(N)}.
\]
Moreover, he proved that this conjecture holds true if any K3 surface with CM admits a certain semistable model.
By refining the argument of Taelman, the first author proved the conjecture under mild assumptions on $p$, which are satisfied if $p \geq 7$, or $p = 5$ and the degree of $L(T)$ is strictly less than $20$; see \cite[Theorem 1.3]{Ito-K19}.

In \cite[Section 4]{Zarhin94} and \cite[Section 7]{Zarhin15}, a nonneat simple abelian threefold is constructed via the Honda--Tate theory. 
With the above results, we can construct a nonneat K3 surface by the same argument as in \cite[Theorem 7.1]{Zarhin15} as follows.

\begin{ex}\label{Example:non neat}
Assume that $p \geq 3$.
As in \cite[Section 7]{Zarhin15},
we can find a Weil $q$-number $\alpha$ with the following properties:
\begin{itemize}
    \item $q$ is of the form $p^{2a}$ for a positive integer $a$.
    \item $\Q(\alpha)$ is a CM field of degree $6$, and $\Q(\alpha)=\Q(\alpha^n)$ for any positive integer $n$.
    \item Let $\{ \alpha_i \}_{1 \leq i \leq 6}$ denote the conjugates of $\alpha$ over $\Q$. Then $\nu_q(\alpha_i) \in \{ 0, 1/2, 1 \}$ for each $\alpha_i$ and the number of $\alpha_i$ with $\nu_q(\alpha_i)=1/2$ is $2$.
    \item After permuting $\{ \alpha_i \}_{1 \leq i \leq 6}$, the equality  $\alpha^2_1\alpha^2_2\alpha^2_3=q^3$ holds. 
    \item $\Q(\alpha)$ has exactly three distinct primes above $p$.
\end{itemize}
(Here we fix an embedding $\overline{\Q} \subset \overline{\Q}_p$ and
the $p$-adic valuation $\nu_q \colon \overline{\Q}_p \to \Q \cup \{ \infty \}$ is normalized by $\nu_q(q)=1$ as in Section \ref{Subsection:The height of a K3 surface}.)
So, the polynomial
\[
L(T):=\prod_{1 \leq i \leq 6} (1- \frac{\alpha_i}{\sqrt{q}}T)
\]
satisfies the conditions of Proposition \ref{Proposition:L-function of K3} for the power $q$ of $p$ (with $d=3, e=1, h=2$).
If $p \geq 5$, then there exist a positive integer $N$ and a K3 surface $X$ over $\F_{q^N}$ such that
$
L_{\mathrm{trc}}(X, T)=L(T)^{(N)}
$
by \cite[Theorem 1.3]{Ito-K19}.
One can easily check that
$\rank \Gamma(X)=2$ and
$d(X_{\F_{q^{m}}})=3$ for any finite extension $\F_{q^{m}}$ of $\F_{q^N}$,
and hence the resulting K3 surface $X$ is \textit{not} neat.
\end{ex}

We also include the following example showing that the integers $d(X)$ and $e(X)$ may change under base field extensions.

\begin{ex}\label{Example:d and e change}
Assume that $p \geq 3$.
We consider a supersingular elliptic curve $E_1$ over a finite field $\F_q$ of characteristic $p$
such that the eigenvalues $\alpha, q/\alpha$ of $\Frob_q$ acting on
$H^1_\et((E_1)_{\overline{\F}_q}, \Q_\ell)$ are distinct.
(For example, let $q$ be of the form $p^{2a+1}$ and $E_1$ the elliptic curve corresponding to the Weil $q$-number $\sqrt{-q}$.)
Let $E_2$ be an ordinary elliptic curve over $\F_q$
and $X$ a Kummer surface associated with $E_1 \times E_2$.
Then, using $(\ref{equation:Kummer surface transcendental})$,
we can see that
$L_{\mathrm{trc}}(X, T)$
is irreducible over $\Q$ and of degree $4$, namely, the equalities $d(X)=2$ and $e(X)=1$ hold.
On the other hand, $d(X_{\F_{q^m}}) = 1$ and $e(X_{\F_{q^m}})=2$ for a finite extension $\F_{q^m}$ of $\F_q$.
(The height $h(X)=h(X_{\F_{q^m}})$ of $X$ is $2$.)
\end{ex}

\begin{rem}
Let $p\geq 5$ be a prime number and $e$ a positive integer satisfying $1 \leq e \leq 10$.
We can produce a K3 surface $X$ over a finite field of characteristic $p$ with $e(X)=e$, using \cite[Theorem 1.3]{Ito-K19}.
See also \cite[Section 6]{Ito-K19}.
\end{rem}

\subsection*{Acknowledgements}
The authors would like to thank Ziquan Yang and James S.\ Milne
for comments on a preliminary version of this paper.

The work of the third author was supported by JSPS KAKENHI Grant Number 20K14284.
The work of the second and the third author was supported by JSPS KAKENHI Grant Number 21H00973.

%\section{The Hilbert scheme of $2$ points on a K3 surface}\label{Section:The Hilbert scheme of 2 points on a K3 surface}

\bibliographystyle{abbrvsort}
\bibliography{bibliography.bib}
\end{document}